\let\Horig\H
\newtheorem{theorem}{Theorem}\numberwithin{theorem}{section}
\newtheorem{definition}[theorem]{Definition}
\newtheorem{lemma}[theorem]{Lemma}
\newtheorem{corollary}[theorem]{Corollary}
\newtheorem{proposition}[theorem]{Proposition}
\newtheorem{question}[theorem]{Question}
\newtheorem{theoremm}{Theorem}\numberwithin{theoremm}{subsection}
\newtheorem{deffinition}[theoremm]{Definition}
\numberwithin{theoremmm}{subsubsection}
\theoremstyle{remark}
\newtheorem{remark}[theorem]{Remark}
\newtheorem{remmark}[theoremm]{Remark}
\newcommand{\Rad}{\operatorname{Rad}}
\newcommand{\Aut}{\operatorname{Aut}}
\newcommand{\Alt}{\mathcal{A}}
\newcommand{\PSL}{\operatorname{PSL}}
\newcommand{\lcm}{\operatorname{lcm}}
\newcommand{\ord}{\operatorname{ord}}
\newcommand{\Sym}{\mathcal{S}}
\newcommand{\Hol}{\operatorname{Hol}}
\newcommand{\C}{\mathcal{C}}
\newcommand{\Soc}{\operatorname{Soc}}
\newcommand{\id}{\operatorname{id}}
\newcommand{\fix}{\operatorname{fix}}
\newcommand{\Out}{\operatorname{Out}}
\newcommand{\PGL}{\operatorname{PGL}}
\newcommand{\Gal}{\operatorname{Gal}}
\newcommand{\GL}{\operatorname{GL}}
\newcommand{\cha}{\operatorname{char}}
\renewcommand{\P}{\operatorname{P}}
\newcommand{\f}{\operatorname{f}}
\newcommand{\p}{\operatorname{p}}
\newcommand{\IN}{\mathbb{N}}
\newcommand{\IF}{\mathbb{F}}
\newcommand{\IZ}{\mathbb{Z}}
\newcommand{\R}{\mathcal{R}}
\renewcommand{\S}{\mathfrak{S}}
\newcommand{\h}{\operatorname{h}}
\newcommand{\Part}{\operatorname{Part}}
\newcommand{\Suz}{\operatorname{Suz}}
\begin{document}

\title{Fibers of word maps and the multiplicities of nonabelian composition factors}

\author{Alexander Bors\thanks{University of Salzburg, Mathematics Department, Hellbrunner Stra{\ss}e 34, 5020 Salzburg, Austria. \newline E-mail: \href{mailto:alexander.bors@sbg.ac.at}{alexander.bors@sbg.ac.at} \newline The author is supported by the Austrian Science Fund (FWF):
Project F5504-N26, which is a part of the Special Research Program \enquote{Quasi-Monte Carlo Methods: Theory and Applications}. \newline 2010 \emph{Mathematics Subject Classification}: Primary: 20D05, 20D60, 20F70. Secondary: 20D30, 20G40. \newline \emph{Key words and phrases:} Finite groups, Word maps, Composition factors, Coset identities.}}

\date{\today}

\maketitle

\abstract{Call a reduced word $w$ \emph{multiplicity-bounding} if and only if a finite group on which the word map of $w$ has a fiber of positive proportion $\rho$ can only contain each nonabelian finite simple group $S$ as a composition factor with multiplicity bounded in terms of $\rho$ and $S$. In this paper, based on recent work of Nikolov, we present methods to show that a given reduced word is multiplicity-bounding and apply them to give some nontrivial examples of multiplicity-bounding words, such as words of the form $x^e$, where $x$ is a single variable and $e$ an odd integer.}

\section{Introduction}\label{sec1}

\subsection{Motivation and main results}\label{subsec1P1}

In recent years, there has been an intense interest in word maps on groups, particularly in their fibers and images; interested readers will find a good overview of the developments and results of this area in the survey article \cite{Sha13a}.

Recall that a (reduced) word in $d$ variables $X_1,\ldots,X_d$ is an element of the free group $F(X_1,\ldots,X_d)$. To each such word $w$ and each group $G$, one can associate a word map $w_G:G^d\rightarrow G$, induced by substitution. Studying the fibers of $w_G$ means studying the solution sets in $G^d$ to equations of the form $w=w(X_1,\ldots,X_d)=g$ for $g\in G$ fixed.

Viewing $w$ as a fixed, an interesting question is what one can say about finite groups $G$ where $w_G$ has a \enquote{large} fiber, say of positive proportion within $G^d$ (i.e., of size at least $\rho|G|^d$ for some $\rho\in\left(0,1\right]$ also fixed beforehand). Recently, building on earlier work of Larsen and Shalev from \cite{LS12a}, the author showed that such an assumption rules out large alternating groups and simple Lie type groups of large rank as composition factors of $G$, see \cite[Theorem 1.1.2]{Bor17b}.

One may view this result as providing an upper bound (namely $0$) on the multiplicity with which a nonabelian finite simple group of one of those two types can occur as a composition factor of $G$ under the assumption on the maximum fiber size of $w_G$. The aim of this paper is to provide some methods to show that a given word $w$ is \enquote{multiplicity-bounding} in general, in the precise sense of Definition \ref{multBoundDef} below, and to apply them to give some interesting examples of multiplicity-bounding words.

\begin{deffinition}\label{multBoundDef}
Let $w$ be a reduced word in $d$ distinct variables, and denote by $\S$ the collection of nonabelian finite simple groups. We say that $w$ is \emph{multiplicity-bounding} if and only if there exists a function $\beta_w:\left(0,1\right]\times\S\rightarrow\IN$ such that for all $(\rho,S)\in\left(0,1\right]\times\S$, the following holds: If $G$ is a finite group such that $w_G$ has a fiber of size at least $\rho|G|^d$, then the multiplicity of $S$ as a composition factor of $G$ is at most $\beta_w(\rho,S)$.
\end{deffinition}

The main results of this paper are summarized in the following theorem:

\begin{theoremm}\label{mainTheo}
The following reduced words are multiplicity-bounding:

\begin{enumerate}
\item Words of the form $x^e$ where $x$ is a single variable and either $e$ is odd or $e$ is even and $22\geq|e|\notin\{8,12,16,18\}$; moreover, the words $x^{\pm e}$ for $e\in\{8,12,16,18,24,30\}$ are \emph{not} multiplicity-bounding.
\item The words $\gamma_n(x_1,\ldots,x_n)$, $n$ a positive integer, defined recursively via $\gamma_1:=x_1$ and $\gamma_{n+1}:=[x_{n+1},\gamma_n]$ (commutator brackets).
\item All nonempty reduced words of length at most $8$ except for the words $x^{\pm 8}$, $x$ a single variable, which are not multiplicity-bounding by statement (1).
\end{enumerate}
\end{theoremm}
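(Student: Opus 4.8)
The plan is to derive from Nikolov's recent work a translation of the metric hypothesis ``$w_G$ has a fiber of proportion $\ge\rho$'' into a structural one: $G$ has a subgroup $M$ of index bounded in terms of $\rho$ and $w$, and elements $g_1,\dots,g_d$ normalising $M$, such that the map $(m_1,\dots,m_d)\mapsto w(g_1m_1,\dots,g_dm_d)$ is constant on $M^d$ --- a \emph{coset identity} for $w$. Combined with the elementary observation that a quotient $G/N$ inherits from $G$ a fiber of $w$ of at least the same proportion (the preimage under $w_{G/N}$ of the image of a fiber of $w_G$ contains that image), this reduces the problem to the equivalence I would actually establish: \emph{$w$ is multiplicity-bounding if and only if there is no $S\in\S$ whose composition-multiplicity in finite groups that carry a $w$-coset identity on a subgroup of a fixed bounded index can be made arbitrarily large.} The ``only if'' direction I would isolate as an amplification lemma: if a finite group $K$ has a simple normal subgroup $S$ of bounded index $d$ and a coset $gS$, $g\notin S$, on which $w$ is a coset identity, then $G_m:=\{(k_1,\dots,k_m)\in K^m:k_1S=\dots=k_mS\}$ has $S$-multiplicity $\ge m$, contains $S^m$ with index $d$, and carries a $w$-coset identity on $S^m$; thus one gadget yields an unbounded family, and --- reversing the translation --- a fiber of $w_{G_m}$ of proportion $\ge\rho_0$ for some $\rho_0>0$ independent of $m$.

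\textbf{The lower central words.} Induct on $n$, the case $\gamma_1=x_1$ being immediate since its word map is injective. Writing $\gamma_n(g_1,\dots,g_n)=[g_n,\gamma_{n-1}(g_1,\dots,g_{n-1})]$ and summing the size of a fiber over the possible values of $\gamma_{n-1}$ shows that a fiber of $\gamma_n$ of proportion $\ge\rho$ forces the average of $|C_G(\gamma_{n-1}(g_1,\dots,g_{n-1}))|$ over $(g_1,\dots,g_{n-1})\in G^{n-1}$ to be at least $\rho|G|$; hence on a set of $(n-1)$-tuples of proportion $\ge\rho/2$ the word $\gamma_{n-1}$ lands in conjugacy classes of size $\le2/\rho$. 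Feeding this into known structure theorems of BFC type and their nilpotency-class refinements produces characteristic subgroups $1\le L\le H\le G$ with $[G:H]$ and $|L|$ bounded in terms of $\rho$ and $n$ and with $H/L$ nilpotent of class $<n$; since nilpotent groups are soluble, every nonabelian composition factor of $G$ occurs in $L$ or in $G/H$, hence with bounded multiplicity.

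\textbf{The power words.} For $w=x^e$ the coset identity on a bounded-index subgroup $M$ says that $(gx)^e$ is constant on $M$; putting $x=1$ gives $(gm)^e=g^e$ for all $m\in M$, which a short computation rewrites as $\theta^{e-1}(m)\cdots\theta(m)\,m=1$ for all $m\in M$, where $\theta$ denotes conjugation by $g$; for $g\in M$ it just says $M$ has exponent dividing $e$. When $e$ is odd, every nonabelian simple group has even order by Feit--Thompson, so $M$ --- of exponent dividing $e$, or collapsing similarly in the twisted case --- can have no nonabelian composition factor, and $G$, being bounded-index over $M$, has boundedly many; for $e=2$ the identity forces $\theta$ to invert $M$, so $M$ is abelian; for the remaining exponents allowed in statement~(1) a finite inspection of almost simple groups and the exponents of their cosets shows that no gadget of the amplification lemma exists, so the criterion of the first paragraph applies. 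The non-examples are explicit gadgets: $\Sym_5$ has an $\Alt_5$-coset, namely the odd permutations, of exponent $12$, which on amplification rules out $x^{\pm12}$ and $x^{\pm24}$; the Mathieu group $\M_{10}$ is an $\Alt_6{\cdot}2$ whose outer coset has exponent $8$, ruling out $x^{\pm8}$ and $x^{\pm16}$; $\PSL(2,8)$ extended by its order-$3$ field automorphism has outer cosets of exponent dividing $18$, ruling out $x^{\pm18}$; and $\Alt_5$, of exponent $30$, rules out $x^{\pm30}$ directly via the powers $\Alt_5^m$.

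\textbf{Short words and the main obstacle.} For the nonempty reduced words of length $\le8$ I would reduce, modulo the operations that do not affect multiplicity-boundingness --- inversion, permutation and inversion of the variables, and, with some care, cyclic rotation --- to a short finite list of representatives, and dispatch each: the powers $x^{\pm e}$, $|e|\le8$, by statement~(1) (with $x^{\pm8}$ excluded there), $[x_2,x_1]$ and $[x_3,[x_2,x_1]]$ by statement~(2), and the remaining, ``non-degenerate'' words by the general criterion, no almost simple gadget of the required shape occurring among the finitely many relevant candidates. The real obstacle is threefold: extracting Nikolov's input in a form precise enough for the multi-variable criterion, \emph{with} control over the coset representatives $g_1,\dots,g_d$, without which the reduction in the first paragraph does not close; carrying out the finite but intricate inspection of almost simple groups that pins down the exponents in statement~(1) as \emph{exactly} the multiplicity-bounding small ones; and the bookkeeping of the length-$\le8$ case analysis.
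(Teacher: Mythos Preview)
Your approach diverges from the paper's at the foundation, and the divergence hides a genuine gap. The paper does \emph{not} establish the translation you propose --- that a fiber of proportion $\geq\rho$ forces a bounded-index subgroup $M\leq G$ carrying a $w$-coset identity --- and I do not see how to extract it from Nikolov's work, which concerns the non-existence of coset identities on \emph{large simple} groups, not the existence of a coset identity inside an arbitrary group with a large fiber. This translation is the linchpin of your general criterion, and without it the equivalence in your first paragraph does not close. The paper proceeds instead through the characteristic series with successive factors $\R_n(G),\C_n(G)$ (alternating solvable radicals and socles of the quotients), bounds $\pi_w(G)\leq\prod_n\gamma_w(\C_n(G))$, and analyzes coset word maps on each centerless completely reducible $\C_n(G)\cong\prod_i S_i^{m_i}$ via the wreath decomposition $\Aut(S^m)=\Aut(S)\wr\Sym_m$. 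The crucial output (Lemma~\ref{equationSystemLem}) is that the resulting equations over $S$ are governed by \emph{varied} coset words of $w$ --- where the distinct occurrences of each variable may be re-indexed independently --- and not by $w$ itself. This is exactly why the paper introduces the hierarchy ``very strongly / strongly / weakly multiplicity-bounding'': the first two imply multiplicity-bounding, while the last is only a necessary condition. Your criterion collapses to ``weakly multiplicity-bounding $\Leftrightarrow$ multiplicity-bounding'', which the paper proves \emph{only for power words} (Theorem~\ref{powerWordTheo}) and leaves open in general; for $\gamma_n$ and for short words the paper instead verifies the stronger ``very strongly'' property.

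Two further points where your route departs and does not yet close. For $\gamma_n$ the paper gives an elementary induction (Proposition~\ref{examplesProp} and Corollary~\ref{examplesCor}): a word in which some variable occurs once, or occurs twice bracketing a very strongly multiplicity-bounding segment, is itself very strongly multiplicity-bounding. Your BFC-type route is different and interesting, but the hypothesis you actually produce --- that $\gamma_{n-1}$ lands in bounded conjugacy classes on a positive-proportion set of tuples --- is strictly weaker than what the standard BFC theorems assume, and the unnamed ``nilpotency-class refinements'' would have to be specified. For odd $e$, your Feit--Thompson argument is clean when $g\in M$ (exponent of $M$ divides $e$, hence $M$ has odd order), but ``collapsing similarly in the twisted case'' hides a real difficulty: the splitting identity $s\,\alpha(s)\cdots\alpha^{e-1}(s)=1$ for all $s\in S$ with $\alpha\notin\Inn(S)$ does not force $S$ to have odd order in any obvious way. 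The paper sidesteps this entirely: after the reduction to $\PSL_2(q)$ and $\Suz(2^{2L-1})$, Corollary~\ref{oddCorollary} exhibits, for each coset, two explicit matrices whose $e$-th powers differ. Your counterexample gadgets are correct and coincide with the paper's (your $\M_{10}$ is the coset $(\PGL_2(9)\setminus\PSL_2(9))\sigma$ of Corollary~\ref{evenCorollary1}). Finally, for short words, be cautious with cyclic rotation: it does not in general preserve fiber sizes of $w_G$, and the paper's reductions (Lemma~\ref{permSubLem}) use only inversion, variable permutation, and variable inversion.
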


Henceforth, let us call a reduced word of the form $x^e$, $x$ a single variable and $e\in\IZ$, a \emph{power word}.

\begin{remmark}\label{mainRem}
Some comments on Theorem \ref{mainTheo}:

\begin{enumerate}
\item By Theorem \ref{mainTheo}(1), we know in particular for all integers $e$ with $|e|\leq 24$ whether the power word $x^e$ is multiplicity-bounding.
\item The finite list of even exponents with multiplicity-bounding power word in Theorem \ref{mainTheo}(1) is most likely not exhaustive; in Section \ref{sec5}, we will discuss an algorithm deciding for a given integer $e$ whether the power word $x^e$ is multiplicity-bounding, and the list in Theorem \ref{mainTheo}(1) was obtained by the author by applying an implementation of that algorithm in GAP \cite{GAP4} to those $e$. It is open whether there are infinitely many even integers $e$ such that $x^e$ is multiplicity-bounding, see also Question \ref{evenQues}.
\item Theorem \ref{mainTheo}(3) is proved using reduction arguments, to break down the number of words to check, combined with extensive computer calculations with GAP. We note that all GAP source code used in the production of this paper is available on the author's personal homepage, see \url{https://alexanderbors.wordpress.com/sourcecode/FibMult}; alternatively, a text file with the source code is also available from the author upon request.
\item By Theorem \ref{mainTheo}(3), we know in particular that all nonempty reduced words of length at most $7$ are multiplicity-bounding, and that $7$ is the optimal integer constant in that statement.
\item The proofs of the parts of Theorem \ref{mainTheo} are scattered across the paper. For statement (1), see Corollaries \ref{oddCorollary}, \ref{evenCorollary1} and \ref{evenCorollary2}. For statement (2), see Corollary \ref{examplesCor} and Proposition \ref{stronglyProp}(1,2). For statement (3), see Section \ref{sec6}.
\end{enumerate}
\end{remmark}

\subsection{Overview of the paper}\label{subsec1P2}

We now give an overview of Sections \ref{sec2} to \ref{sec7} of this paper.

In Section \ref{sec2}, we discuss a sufficient property, called \enquote{very strongly multiplicity-bounding}, for a reduced word to be multiplicity-bounding based on so-called \enquote{coset word maps}. This is elementary, but a bit technical; the key ideas are that the assumption that $w_G$ has a fiber of positive proportion transfers to $w_{G/\Rad(G)}$, and since $G/\Rad(G)$ acts faithfully by conjugation on its socle, which is a direct product of nonabelian finite simple groups, one is naturally led to studying certain equations over powers $S^n$, $S$ a nonabelian finite simple group, see Lemma \ref{equationSystemLem}.

Some simple methods to prove that a given word is very strongly multiplicity-bounding are discussed in Section \ref{sec3}, and this will already be enough to give a proof of Theorem \ref{mainTheo}(2).

In Section \ref{sec4}, we build on deeper results, most notably Nikolov's recent result \cite[Proposition 7]{Nik16a} (concerning coset identities over nonabelian finite simple groups) and information on conjugacy classes of maximal subgroups of finite simple groups to heavily reduce the collection of simple groups which one must check in order to prove that a given word is very strongly multiplicity-bounding, making it possible to perform these checks for short words in a reasonable amount of time with a modern computer, which will be used later in Section \ref{sec6}.

Section \ref{sec5} is devoted to power words. We will see that the general theory developed so far simplifies a lot when specialized to them, allowing for a direct decision algorithm whether a given power word is multiplicity-bounding and further reductions, which we use to check power words up to length $24$ with a computer.

In Section \ref{sec6}, we systematically study short words (i.e., words of length up to $8$). We first prove some further reduction results and apply them directly to cut down the lists of short words to check as far as possible. After this, we apply a GAP implementation of an algorithm written by the author to actually check that all words of length up to $8$ except for the power words of length $8$ are very strongly multiplicity-bounding, in particular multiplicity-bounding.

Finally, Section \ref{sec7} presents some open problems for further research.

\subsection{Some notation used throughout the paper}\label{subsec1P3}

We denote by $\IN$ the set of natural numbers (including $0$) and by $\IN^+$ the set of positive integers. The image and preimage of a set $M$ under a function $f$ are denoted by $f[M]$ and $f^{-1}[M]$ respectively.

For groups $G_1$ and $G_2$, we denote their free product by $G_1 \ast G_2$, and we write $G_1\leq G_2$ for \enquote{$G_1$ is a subgroup of $G_2$}, $G_1\unlhd G_2$ for \enquote{$G_1$ is a normal subgroup of $G_2$}, and $G_1\cha G_2$ for \enquote{$G_1$ is a characteristic subgroup of $G_2$}. The symmetric and alternating group on the finite set $\{1,\ldots,n\}$ are denoted by $\Sym_n$ and $\Alt_n$ respectively. For an odd positive integer $O$, the Suzuki group over $\IF_{2^O}$ (not the sporadic Suzuki group) is denoted by $\Suz(2^O)$. For an element $g$ of a group $G$, we denote by $\tau_g:G\rightarrow G$, $x\mapsto gxg^{-1}$, the conjugation by $g$ on $G$. The holomorph of a group $G$, i.e., the semidirect product $G\rtimes\Aut(G)$ with respect to the natural action of $\Aut(G)$ on $G$, is denoted by $\Hol(G)$. The fixed point subgroup of an automorphism $\alpha$ of some group is denoted by $\fix(\alpha)$. For a finite group $G$, the solvable radical of $G$ (the unique largest solvable normal subgroup of $G$, see \cite[paragraph after 5.1.2, p.~122]{Rob96a}) is denoted by $\Rad(G)$, and the socle of $G$ (the subgroup generated by the minimal nontrivial normal subgroups of $G$) by $\Soc(G)$.

For a prime $p$ and $n\in\IN^+$, we denote by $\nu_p(n)$ the $p$-adic valuation of $n$, i.e., the largest $k\in\IN$ such that $p^k\mid n$. For a prime power $q$, the finite field with $q$ elements is denoted by $\IF_q$ and its group of units by $\IF_q^{\ast}$; moreover, for $d\in\IN^+$ and a subgroup $G$ of the general linear group $\GL_d(q)$, we denote the image of a matrix $M\in G$ under the canonical projection $G\rightarrow G/\zeta G$ onto the central quotient of $G$ (the projective version of $G$) by $\overline{M}$. For a Galois field extension $L/K$, $\Gal(L/K)$ denotes its Galois group.

The rest of the notation used in this paper is either defined at some point in the text or standard.

\section{Connection with coset word maps}\label{sec2}

In this section, we consider a certain generalization of word maps and show how they relate with our problem of proving that a given word is multiplicity-bounding. The maps considered here are also associated with certain formal expressions, which we will call \emph{coset word maps}. Let us first consider the following concepts generalizing words and word maps respectively:

\begin{definition}\label{gWordDef}
Let $G$ be a group, and let $X_1,\ldots,X_d$ be variables. Denote by $F(X_1,\ldots,X_d)$ the free group generated by $X_1,\ldots,X_d$.

\begin{enumerate}
\item A \emph{(reduced) $G$-word} is an element of the free product $F(X_1,\ldots,X_d)\ast G$.
\item Let $w$ be a $G$-word and $H\leq G$. We associate to $w$ and $H$ the \emph{word map} $w_H:H^d\rightarrow G$, defined as follows: For each $h=(h_1,\ldots,h_d)\in H^d$, denote by $\varphi_h$ the unique homomorphism $F(X_1,\ldots,X_d)\rightarrow H\leq G$ mapping $X_i\mapsto h_i$ for $i=1,\ldots,d$, and denote by $\psi_h$ the unique extension of $\varphi_h$ to a homomorphism $F(X_1,\ldots,X_d)\ast G\rightarrow G$ restricting to the identity on the free factor $G$. Then $w_H(h):=\psi_h(w)$.
\end{enumerate}
\end{definition}

We now define coset words and related concepts:

\begin{definition}\label{cosetWordDef}
Let $w=w(X_1,\ldots,X_d)$ be a reduced word in $d$ distinct variables.

\begin{enumerate}
\item For a group $G$ and elements $g_1,\ldots,g_d\in G$, the \emph{coset word of $w$ w.r.t.~$g_1,\ldots,g_d$}, denoted by $w_G^{[g_1,\ldots,g_d]}$, is the $G$-word defined as the following evaluation of a word map in the \enquote{usual} sense: $w_{F(X_1,\ldots,X_d)\ast G}(X_1g_1,\ldots,X_dg_d)$.
\item For a group $G$, a subgroup $H\leq G$ and elements $g_1,\ldots,g_d\in G$, the word map (in the sense of Definition \ref{gWordDef}(2)) $(w_G^{[g_1,\ldots,g_d]})_H$, also denoted by $w_{H,G}^{[g_1,\ldots,g_d]}$, is called the \emph{$(H,G)$-coset word map of $w$ w.r.t.~$g_1,\ldots,g_d$}.
\item For a centerless group $H$, a \emph{coset word (map) on $H$} is simply understood as an $(H,\Aut(H))$-coset word (map), viewing $H$ as a subgroup of $\Aut(H)$ in the natural way.
\item For a finite group $G$, denote by $\Pi_w(G)$ the largest fiber size of the word map $w_G:G^d\rightarrow G$, and set $\pi_w(G):=\Pi_w(G)/|G|^d\in\left(0,1\right]$.
\item For finite groups $H\leq G$, denote by $\Gamma_w(H,G)$ the largest fiber size of one of the $(H,G)$-coset word maps $w_{H,G}^{[g_1,\ldots,g_d]}$, where $g_1,\ldots,g_d$ run over $G$. Moreover, set $\gamma_w(H,G):=\Gamma_w(H,G)/|H|^d\in\left(0,1\right]$.
\item For a finite centerless group $H$, set $\Gamma_w(H):=\Gamma_w(H,\Aut(H))$ and $\gamma_w(H):=\gamma_w(H,\Aut(H))$.
\end{enumerate}
\end{definition}

Note that by definition, $w_{H,G}^{[g_1,\ldots,g_d]}(h_1,\ldots,h_d)=w(h_1g_1,\ldots,h_dg_d)\in G$. With a standard coset-wise counting argument (bounding both the number of cosets of $N^d$ in $G^d$ ($d$ the number of distinct variables in the word $w$) that intersect with some fixed fiber of $w_G$ and the maximum intersection size of such a coset with the fiber), one can show:

\begin{lemma}\label{cosetWiseLem}
Let $w$ be a reduced word, $G$ a finite group and $N\unlhd G$. Then $\Pi_w(G)\leq\Pi_w(G/N)\cdot\Gamma_w(N,G)$, or equivalently, $\pi_w(G)\leq\pi_w(G/N)\cdot\gamma_w(N,G)$.\qed
\end{lemma}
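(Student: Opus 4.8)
The plan is to run the standard ``coset-wise counting'' argument relative to the normal subgroup $N \unlhd G$, splitting $G^d$ into cosets of $N^d$ and estimating, for a fixed fiber $w_G^{-1}[g]$, (a) how many cosets of $N^d$ it can meet and (b) how large its intersection with any single such coset can be. First I would fix $g \in G$ with $|w_G^{-1}[g]| = \Pi_w(G)$. For a tuple $\mathbf{a} = (a_1,\ldots,a_d) \in G^d$, the coset $a_1 N \times \cdots \times a_d N$ consists of the tuples $(h_1 a_1, \ldots, h_d a_d)$ with $h_i \in N$, and on this coset the map $w_G$ agrees, after composing with the quotient map $G \to G/N$, with $w_{G/N}$ evaluated at $(\overline{a_1},\ldots,\overline{a_d})$. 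Hence the coset can meet $w_G^{-1}[g]$ only if $w_{G/N}(\overline{a_1},\ldots,\overline{a_d}) = \overline{g}$ in $G/N$; the number of cosets of $N^d$ satisfying this is exactly $|w_{G/N}^{-1}[\overline{g}]| \leq \Pi_w(G/N)$.

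Next I would bound the intersection of $w_G^{-1}[g]$ with a single coset $a_1 N \times \cdots \times a_d N$ that does meet the fiber. Here I use the observation recorded just before the lemma, namely that for $\mathbf{h} = (h_1,\ldots,h_d) \in N^d$ one has $w((h_1 a_1),\ldots,(h_d a_d)) = w_{N,G}^{[a_1,\ldots,a_d]}(h_1,\ldots,h_d)$, the $(N,G)$-coset word map of $w$ with respect to $a_1,\ldots,a_d$. Thus the set of $\mathbf{h} \in N^d$ with $(h_1 a_1,\ldots,h_d a_d) \in w_G^{-1}[g]$ is precisely a fiber of that coset word map, and so has size at most $\Gamma_w(N,G)$ by definition. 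Multiplying the two bounds gives $\Pi_w(G) \leq \Pi_w(G/N) \cdot \Gamma_w(N,G)$. Dividing through by $|G|^d = |G/N|^d \cdot |N|^d$ and using the definitions $\pi_w = \Pi_w/|\cdot|^d$, $\gamma_w(N,G) = \Gamma_w(N,G)/|N|^d$ yields the equivalent normalized form $\pi_w(G) \leq \pi_w(G/N) \cdot \gamma_w(N,G)$.

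The argument is essentially bookkeeping, so there is no serious obstacle; the only point requiring a little care is making sure the two quantities being multiplied are genuinely independent in the right way. Concretely, one must check that the bound $\Gamma_w(N,G)$ on the per-coset intersection is uniform over all the (at most $\Pi_w(G/N)$) relevant cosets, which it is because $\Gamma_w(N,G)$ is defined as a maximum over \emph{all} choices of $g_1,\ldots,g_d \in G$ of the largest fiber of $w_{N,G}^{[g_1,\ldots,g_d]}$. A minor subtlety is that the representatives $a_i$ of the cosets are not unique, but the coset word map attached to $(a_1,\ldots,a_d)$ and the one attached to any other representative tuple have the same set of fiber sizes (they differ only by a translation in the $N$-coordinates), so the bound $\Gamma_w(N,G)$ applies regardless of the choice; in any case one only needs the existence of \emph{some} representative tuple, for which the inequality is immediate.
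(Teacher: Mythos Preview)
Your proposal is correct and is precisely the standard coset-wise counting argument the paper invokes: bound the number of $N^d$-cosets meeting a maximal fiber via $\Pi_w(G/N)$, and bound each intersection size via $\Gamma_w(N,G)$. The paper states the lemma without a written-out proof, so your argument simply fills in the details it leaves implicit.
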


The following concepts will be helpful in our subsequent study of coset word maps on nonabelian finite characteristically simple groups:

\begin{definition}\label{variationDef}
Let $w=w(X_1,\ldots,X_d)$ be a reduced word in $d$ distinct variables.

\begin{enumerate}
\item For any variable $X$ (not necessarily among the $X_i$), the \emph{multiplicity of $X$ in $w$}, denoted $\mu_w(X)$, is the number of occurrences of $X^{\pm1}$ in $w$.
\item A \emph{variation of $w$} is a word obtained from $w$ by adding, for $i=1,\ldots,d$, to each occurrence of $X_i^{\pm1}$ in $w$ a second index from the range $\{1,\ldots,\mu_w(X_i)\}$.
\item For groups $H\leq G$, a \emph{varied $(H,G)$-coset word (map) of $w$} is just an $(H,G)$-coset word (map) of one of the variations of $w$, and for a centerless group $H$, a \emph{varied coset word (map) of $w$ on $H$} is a varied $(H,\Aut(H))$-coset word (map) of $w$.
\item For any group $G$, we call two $G$-words $w_1(Y_1,\ldots,Y_d)$ and $w_2(Z_1,\ldots,Z_d)$ in the same number of variables \emph{equivalent} if and only if there exists a permutation $\sigma\in\Sym_d$ such that $w_1(Z_{\sigma(1)},\ldots,Z_{\sigma(d)})=w_2(Z_1,\ldots,Z_d)$.
\end{enumerate}
\end{definition}

For example, $X_{1,2}X_{2,2}X_{1,1}^{-1}X_{2,2}^{-1}$ is a variation of the commutator word $[X_1,X_2]=X_1X_2X_1^{-1}X_2^{-1}$, but $X_{1,3}X_{2,2}X_{1,1}^{-1}X_{2,2}^{-1}$ is not, because $3>2=\mu_{[X_1,X_2]}(X_1)$.

\begin{remark}\label{variationRem}
We note the following basic facts about variations of a given reduced word $w$ in $d$ distinct variables and of length $l$:

\begin{enumerate}
\item All variations of $w$ are still reduced words of length $l$.
\item $w$ only has finitely many variations; more precisely, the number of variations of $w$ is just $\prod_{k=1}^d{\mu_w(X_k)^{\mu_w(X_k)}}$.
\end{enumerate}
\end{remark}

The following is clear from the definition:

\begin{lemma}\label{equivalentLem}
Let $H\leq G$ be groups and $w_1,w_2$ equivalent $G$-words. Then for some coordinate permutation $\sigma$ on $H^d$, we have that $(w_1)_H\circ\sigma=(w_2)_H$. In particular, $(w_1)_H:H^d\rightarrow G$ is constant if and only if $(w_2)_H$ is constant.\qed
\end{lemma}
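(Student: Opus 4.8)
The statement to prove is Lemma~\ref{equivalentLem}, which asserts that equivalent $G$-words induce word maps that differ only by a coordinate permutation, and in particular one is constant iff the other is.

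The plan is to unpack the definitions directly; this is a purely formal verification with no real content beyond bookkeeping. First I would fix the permutation $\sigma\in\Sym_d$ witnessing the equivalence of $w_1(Y_1,\ldots,Y_d)$ and $w_2(Z_1,\ldots,Z_d)$, so that $w_1(Z_{\sigma(1)},\ldots,Z_{\sigma(d)}) = w_2(Z_1,\ldots,Z_d)$ as elements of $F(Z_1,\ldots,Z_d)\ast G$ (after renaming the free generators of $w_1$ to the $Z_i$). Let $\sigma^{\ast}:H^d\rightarrow H^d$ be the coordinate permutation sending $(h_1,\ldots,h_d)\mapsto(h_{\sigma(1)},\ldots,h_{\sigma(d)})$.

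Next I would evaluate both sides at an arbitrary tuple $h=(h_1,\ldots,h_d)\in H^d$ using Definition~\ref{gWordDef}(2). By definition, $(w_2)_H(h) = \psi_h(w_2)$, where $\psi_h$ is the extension to $F(Z_1,\ldots,Z_d)\ast G$ of the homomorphism sending $Z_i\mapsto h_i$ and fixing $G$ pointwise. Substituting the equivalence relation, $\psi_h(w_2) = \psi_h\bigl(w_1(Z_{\sigma(1)},\ldots,Z_{\sigma(d)})\bigr)$, and since $\psi_h$ is a homomorphism fixing $G$, this equals the image of $w_1$ under the homomorphism that sends the $i$-th free generator to $h_{\sigma(i)}$ and fixes $G$ — which is precisely $\psi_{\sigma^{\ast}(h)}$ applied to $w_1$, i.e.\ $(w_1)_H(\sigma^{\ast}(h))$. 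Hence $(w_1)_H\circ\sigma^{\ast} = (w_2)_H$, which is the first assertion. The ``in particular'' clause is then immediate: a bijection $\sigma^{\ast}$ of $H^d$ carries the property of being constant from $(w_1)_H$ to $(w_1)_H\circ\sigma^{\ast}$, and vice versa.

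There is no genuine obstacle here — the only point requiring a little care is keeping the two sets of free generators ($Y_i$ for $w_1$, $Z_i$ for $w_2$) straight and confirming that the universal property of the free product really does let us ``reindex'' the substitution homomorphism by $\sigma$ in the way claimed. Since the excerpt explicitly says ``The following is clear from the definition,'' I would in fact keep the written proof to at most two or three sentences, or simply let the \qed already present stand and note that the argument is the substitution computation sketched above.
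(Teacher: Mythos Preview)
Your proposal is correct and matches the paper's approach: the paper in fact gives no proof at all (the statement is marked with a \qed\ and prefaced by ``The following is clear from the definition''), so your direct definitional unpacking is exactly the intended argument. Your suggestion to keep the written version to a sentence or two, or to let the \qed\ stand, is appropriate.
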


We now turn to studying coset word maps on nonabelian finite characteristically simple groups, starting with the following basic lemma:

\begin{lemma}\label{equationSystemLem}
Let $S$ be a nonabelian finite simple group, $n\in\IN^+$, $w=w(X_1,\ldots,X_d)=x_1^{\epsilon_1}\cdots x_l^{\epsilon_l}$ a reduced word of length $l$ (in particular, $\epsilon_i\in\{\pm1\}$) in $d$ distinct variables. Denote by $\iota$ the unique function $\{1,\ldots,l\}\rightarrow\{1,\ldots,d\}$ such that for $i=1,\ldots,l$, $x_i=X_{\iota(i)}$.

Fix automorphisms $\vec{\alpha_1},\ldots,\vec{\alpha_d}\in\Aut(S^n)=\Aut(S)\wr\Sym_n$ (see \cite[3.3.20, p.~90]{Rob96a}), writing $\vec{\alpha_k}=(\alpha_{k,1},\ldots,\alpha_{k,n})\circ\sigma_i$, where each $\alpha_{k,j}$ is an automorphism of $S$ and $\sigma$ is a coordinate permutation on $S^n$, naturally associated with a permutation from $\Sym_n$ which, by abuse of notation, we also call $\sigma$.

Finally, for $j=1,\ldots,l$, define $\chi_j:=\begin{cases}\sigma_{\iota(1)}^{\epsilon_1}\cdots\sigma_{\iota(j-1)}^{\epsilon_{j-1}}, & \text{if }\epsilon_j=1, \\ \sigma_{\iota(1)}^{\epsilon_1}\cdots\sigma_{\iota(j)}^{\epsilon_j}, & \text{if }\epsilon_j=-1\end{cases}\in\Sym_n$.

Then for any automorphism $\vec{\beta}=(\beta_1\times\cdots\times\beta_n)\circ\psi\in\Aut(S^n)$, a necessary condition that the fiber of $\vec{\beta}$ under the coset word map $w_{S^n}^{[\vec{\alpha_1},\ldots,\vec{\alpha_n}]}$ is nonempty is that $w_{\Sym_n}(\sigma_1,\ldots,\sigma_d)=\psi$. If this necessary condition is met, then that fiber consists precisely of those $((s_{1,1},\ldots,s_{1,n}),\ldots,(s_{d,1},\ldots,s_{d,n}))\in(S^n)^d$ which are solutions of the equation system, consisting of $n$ equations with variables $s_{k,i}$ ranging over $S$, whose $i$-th equation, for $i=1,\ldots,n$ is the following:

\begin{equation}\label{coordinateEq}
(s_{\iota(1),\chi_1^{-1}(i)}\alpha_{\iota(1),\chi_1^{-1}(i)})^{\epsilon_1}\cdots(s_{\iota(l),\chi_l^{-1}(i)}\alpha_{\iota(l),\chi_l^{-1}(i)})^{\epsilon_l}=\beta_i.
\end{equation}

Moreover, the left-hand side of Equation (\ref{coordinateEq}) is an $\Aut(S)$-word equivalent to a varied coset word of $w$ on $S$.
\end{lemma}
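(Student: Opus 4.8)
The plan is to unwind the definition of the coset word map $w_{S^n}^{[\vec{\alpha_1},\ldots,\vec{\alpha_d}]}$ componentwise through the wreath product decomposition $\Aut(S^n)=\Aut(S)\wr\Sym_n$, and then read off the two asserted facts (the permutation-part necessary condition, and the coordinate equation system) from the resulting bookkeeping. First I would recall that by Definition \ref{cosetWordDef}(1), the coset word $w_{S^n}^{[\vec{\alpha_1},\ldots,\vec{\alpha_d}]}$ is the element $w(X_1\vec{\alpha_1},\ldots,X_d\vec{\alpha_d})$ of $F(X_1,\ldots,X_d)\ast\Aut(S^n)$, so that for $\vec{s_k}=(s_{k,1},\ldots,s_{k,n})\in S^n$ the value of the coset word map is the product
\begin{equation*}
(\vec{s}_{\iota(1)}\vec{\alpha}_{\iota(1)})^{\epsilon_1}\cdots(\vec{s}_{\iota(l)}\vec{\alpha}_{\iota(l)})^{\epsilon_l}\in\Aut(S^n),
\end{equation*}
where I use $w=x_1^{\epsilon_1}\cdots x_l^{\epsilon_l}$ with $x_i=X_{\iota(i)}$. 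Viewing each factor $\vec{s}_{\iota(j)}\vec{\alpha}_{\iota(j)}$ as the element $((s_{\iota(j),1},\ldots,s_{\iota(j),n})\cdot(\alpha_{\iota(j),1},\ldots,\alpha_{\iota(j),n}))\circ\sigma_{\iota(j)}$ of $\Aut(S)\wr\Sym_n$, I would push all the permutation parts $\sigma_{\iota(j)}^{\epsilon_j}$ to the right through the base-group parts using the standard wreath-product multiplication rule (and its analogue for inverses, which is where the case distinction in the definition of $\chi_j$ comes from: an inverted letter $(\vec s\vec\alpha)^{-1}$ contributes $\sigma^{-1}$ on the left of its base part, so the ``accumulated'' permutation acting on a left-inverse factor already includes $\sigma_{\iota(j)}^{\epsilon_j}$, whereas for a non-inverted factor it does not). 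Collecting terms, the total permutation part of the product is exactly $\sigma_{\iota(1)}^{\epsilon_1}\cdots\sigma_{\iota(l)}^{\epsilon_l}=w_{\Sym_n}(\sigma_1,\ldots,\sigma_d)$, which must equal $\psi$ for the product to lie in the fiber of $\vec\beta$; this gives the necessary condition.

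Assuming that condition holds, I would then equate base-group parts. After moving the permutations to the right, the $j$-th factor contributes, in the $i$-th coordinate of the base group, the element $(s_{\iota(j),\chi_j^{-1}(i)}\alpha_{\iota(j),\chi_j^{-1}(i)})^{\epsilon_j}$ — the index $\chi_j^{-1}(i)$ being precisely the coordinate that gets routed into position $i$ by the product of the permutations accumulated up to (and, in the inverted case, including) step $j$. Multiplying these in order $j=1,\ldots,l$ and equating with the $i$-th coordinate $\beta_i$ of the base part of $\vec\beta$ yields exactly Equation (\ref{coordinateEq}), for each $i=1,\ldots,n$; so the fiber is the solution set of this system, as claimed. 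The main care needed here is the index chase: one has to verify carefully that ``the coordinate fed into slot $i$ after $j$ steps'' really is $\chi_j^{-1}(i)$ with the stated convention, distinguishing $\epsilon_j=+1$ from $\epsilon_j=-1$. This is the step I expect to be the principal (though purely bookkeeping) obstacle — it is easy to get an off-by-one error in whether $\sigma_{\iota(j)}^{\epsilon_j}$ is included in $\chi_j$ — and I would double-check it on the running example $w=[X_1,X_2]$.

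Finally, for the ``moreover'' clause, I would observe that the left-hand side of (\ref{coordinateEq}) is literally a product of $l$ factors of the form $(s_{\iota(j),\ast}\,\alpha_{\iota(j),\ast})^{\epsilon_j}$, i.e.\ an $\Aut(S)$-word in which the $j$-th letter is $X_{\iota(j)}$ (equipped with some second index coming from $\chi_j^{-1}(i)$) post-composed with the constant $\alpha_{\iota(j),\chi_j^{-1}(i)}\in\Aut(S)$. Reindexing the second indices within each variable's occurrences by an order-preserving bijection onto $\{1,\ldots,\mu_w(X_k)\}$ produces exactly a variation $\tilde w$ of $w$ in the sense of Definition \ref{variationDef}(2), and the displayed expression is then $\tilde w(\ldots X_{k,m}\alpha_{k,\ast}\ldots)$, which by Definition \ref{cosetWordDef}(1) and the discussion of coset words is a varied coset word of $w$ on $S$; this matches the assertion up to the equivalence of $\Aut(S)$-words from Definition \ref{variationDef}(4) (absorbing the relabelling permutation), and Lemma \ref{equivalentLem} guarantees that nothing is lost in passing to an equivalent word. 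I would close by noting that the indices $\chi_j^{-1}(i)$ need not be distinct for distinct $j$ with the same $\iota(j)$, but the definition of variation only requires the second indices to lie in $\{1,\ldots,\mu_w(X_k)\}$, so no injectivity is needed.
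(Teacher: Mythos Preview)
Your approach is essentially identical to the paper's: write the coset word map value as a product in the wreath product, push the permutation parts $\sigma_{\iota(j)}^{\epsilon_j}$ to the right (with the same case distinction for $\epsilon_j=\pm1$ explaining the definition of $\chi_j$), read off the necessary condition $w_{\Sym_n}(\sigma_1,\ldots,\sigma_d)=\psi$ modulo the base group $\Aut(S)^n$, and then compare coordinates to obtain Equation~(\ref{coordinateEq}). One small point to tighten in your final paragraph: the relabeling should be an injection from the \emph{set} of distinct values $\{\chi_j^{-1}(i):\iota(j)=k\}$ (which has size at most $\mu_w(X_k)$) into $\{1,\ldots,\mu_w(X_k)\}$, not a bijection from occurrences onto that range --- this is what the paper does, and it ensures that two positions sharing the same variable $s_{k,\chi_j^{-1}(i)}$ still share the same doubly-indexed variable after relabeling, so that the resulting $\Aut(S)$-word is genuinely equivalent (in the sense of Definition~\ref{variationDef}(4)) to a varied coset word rather than merely a specialization of one.
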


\begin{proof}
For $k=1,\ldots,d$, set $\vec{s_k}:=(s_{k,1},\ldots,s_{k,n})$ and $\vec{t_k}:=(s_{k,1}\alpha_{k,1},\ldots,s_{k,n}\alpha_{k,n})$. That $(\vec{s_1},\ldots,\vec{s_n})$ lies in the fiber of $\vec{\beta}$ under $w_{S^n}^{[\vec{\alpha_1},\ldots,\vec{\alpha_n}]}$ by definition just means that

\[
(\vec{t_{\iota(1)}}\sigma_{\iota(1)})^{\epsilon_1}\cdots(\vec{t_{\iota(l)}}\sigma_{\iota(l)})^{\epsilon_l}=(\beta_1,\ldots,\beta_n)\psi.
\]

Transform the left-hand side of this equation by moving all the $\sigma_{\iota(j)}^{\epsilon_j}$ to the right, replacing the factors $(\vec{t_{\iota(j)}})^{\epsilon_j}$ by the corresponding conjugates (obtained by coordinate permutations). The case distinction in the definition of $\chi_j$ comes from the observation that if $\epsilon_j=1$, then $(\vec{t_{\iota(j)}}\sigma_{\iota(j)})^{\epsilon_j}=\vec{t_{\iota(j)}}\sigma_{\iota(j)}$, so that $\sigma_{\iota(j)}$ is already to the right of $\vec{t_{\iota(j)}}$ and hence does not affect it, whereas if $\epsilon_j=-1$, then $(\vec{t_{\iota(j)}}\sigma_{\iota(j)})^{\epsilon_j}=\sigma_{\iota(j)}^{-1}\vec{t_{\iota(j)}}^{-1}$ so that $\vec{t_{\iota(j)}}^{-1}$ is affected when moving $\sigma_{\iota(j)}^{-1}$ to the right. These transformations result in the following equivalent equation:

\[
(\vec{t_{\iota(1)}}^{\epsilon_1})^{\chi_1}\cdots(\vec{t_{\iota(l)}}^{\epsilon_l})^{\chi_l}w(\sigma_1,\ldots,\sigma_d)=(\beta_1,\ldots,\beta_n)\psi
\]

Considering this equation in $\Aut(S^n)$ modulo the normal subgroup $\Aut(S)^n$, we derive the necessity of $w(\sigma_1,\ldots,\sigma_d)=\psi$, and if this holds, we can cancel $\psi$ on both sides and derive the described system of equations by comparing, for $i=1,\ldots,n$, the $i$-th coordinates of the tuples on both sides.

For the last assertion, just replace the second indices of the $s_{k,i}$ appearing in the left-hand side of Equation (\ref{coordinateEq}) by natural numbers from $\{1,\ldots,\mu_w(s_k)\}$ in an injective way (which is possible since for fixed $k$, the $s_{k,i}$ occur precisely at those positions in the left-hand side where $s_k$ occurs in $w$.
\end{proof}

We now introduce some concepts establishing a connection with multiplicity-bounding words:

\begin{definition}\label{stronglyDef}
Let $w=w(X_1,\ldots,X_d)$ be a reduced word in $d$ distinct variables.

\begin{enumerate}
\item We say that $w$ is \emph{strongly multiplicity-bounding} if and only if for all nonabelian finite simple groups $S$, the following holds: If $f$ is a varied coset word map of $w$ on $S$, associated with a varied coset word $v$ of $w$ on $S$ such that $v$ is equivalent to the LHS of one of the $n$ equations described in Lemma \ref{equationSystemLem}, for suitable choices of $n$ and $\vec{\beta}$ with $w(\sigma_1,\ldots,\sigma_d)=\psi$, then $f$ is not constant.
\item We say that $w$ is \emph{very strongly multiplicity-bounding} if and only if for all nonabelian finite simple groups $S$, none of the varied coset word maps of $w$ on $S$ is constant.
\item We say that $w$ is \emph{weakly multiplicity-bounding} if and only if for all nonabelian finite simple groups $S$, none of the coset word maps of $w$ on $S$ is constant.
\end{enumerate}
\end{definition}

The concept of a weakly multiplicity-bounding word will be used in Sections \ref{sec5} and \ref{sec6}. We have the following \enquote{hierarchy} between these word properties, justifying the chosen terminology:

\begin{proposition}\label{stronglyProp}
The following hold:

\begin{enumerate}
\item Very strongly multiplicity-bounding words are strongly multiplicity-bounding.
\item Strongly multiplicity-bounding words are multiplicity-bounding.
\item Multiplicity-bounding words are weakly multiplicity-bounding.
\end{enumerate}
\end{proposition}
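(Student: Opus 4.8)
The plan is to prove the three implications separately, working from the easiest to the most involved.

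\medskip

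\textbf{Statement (1) is nearly immediate from the definitions.} If $w$ is very strongly multiplicity-bounding, then by Definition \ref{stronglyDef}(2), for every nonabelian finite simple group $S$ \emph{none} of the varied coset word maps of $w$ on $S$ is constant. The hypothesis in Definition \ref{stronglyDef}(1) singles out a particular sub-collection of varied coset word maps of $w$ on $S$ (those arising as left-hand sides of the coordinate equations in Lemma \ref{equationSystemLem}); since no varied coset word map is constant, in particular none of these is, so $w$ is strongly multiplicity-bounding. Here one only needs to observe, via the last sentence of Lemma \ref{equationSystemLem}, that the maps referenced in Definition \ref{stronglyDef}(1) are indeed among the varied coset word maps of $w$ on $S$, together with Lemma \ref{equivalentLem} to pass between equivalent $\Aut(S)$-words without affecting constancy.

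\medskip

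\textbf{Statement (3) is also a short argument, essentially contrapositive unwinding.} A coset word map of $w$ on $S$ is an $(S,\Aut(S))$-coset word map $w_{S,\Aut(S)}^{[\alpha_1,\ldots,\alpha_d]}$ for suitable $\alpha_i\in\Aut(S)$; this is a special case of an $(H,G)$-coset word map with $H=S\unlhd G=\Aut(S)$ (using that $S$ is centerless, $S$ embeds in $\Aut(S)$ with $S\unlhd\Aut(S)$ in fact $\Inn(S)\cong S$). If such a coset word map were constant, then $\Gamma_w(S,\Aut(S))=|S|^d$, i.e.\ $\gamma_w(S)=1$. Now take $G=\Aut(S)$, which is a finite group having $S$ as a composition factor with multiplicity (at least) one, and more generally consider $G=S\wr\Sym_n$ acting on itself; applying Lemma \ref{cosetWiseLem} with the relevant normal subgroup, the constancy of the coset word map forces $\pi_w(G)$ to stay bounded below by a positive constant independent of $n$, while the multiplicity of $S$ as a composition factor of $G$ grows with $n$. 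This contradicts the existence of the bounding function $\beta_w$ in Definition \ref{multBoundDef}. I would phrase this cleanly by showing: if $w$ is \emph{not} weakly multiplicity-bounding, one builds an explicit family $G_n$ with $\pi_w(G_n)\geq\rho_0>0$ and unbounded multiplicity of some fixed $S$, refuting multiplicity-boundingness.

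\medskip

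\textbf{Statement (2) is the main obstacle} and will require the real work. The idea is the contrapositive together with the reduction strategy sketched in Section \ref{sec2}: assume $w$ is not multiplicity-bounding, so there is $\rho>0$ and a fixed $S\in\S$ and finite groups $G=G_n$ with $\pi_w(G_n)\geq\rho$ but the multiplicity $m_n$ of $S$ in $G_n$ unbounded. One passes from $G_n$ to $G_n/\Rad(G_n)$ (noting $\pi_w$ does not decrease under this quotient, a consequence of Lemma \ref{cosetWiseLem} applied with $N=\Rad(G_n)$ once one checks $\gamma_w(\Rad(G_n),G_n)=1$ — or more carefully, that the relevant inequality lets the fiber proportion survive into the quotient) and then uses that $G_n/\Rad(G_n)$ acts faithfully by conjugation on its socle, a direct product of nonabelian finite simple groups, of which at least $m_n$ factors are isomorphic to $S$. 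Grouping these $S$-factors gives a section isomorphic to a subgroup of $\Aut(S^{m_n}) = \Aut(S)\wr\Sym_{m_n}$ containing $S^{m_n}$, and applying Lemma \ref{cosetWiseLem} repeatedly reduces a large fiber of $w_{G_n}$ to a large fiber of some coset word map over $S^{m_n}$, i.e.\ to a system of $m_n$ equations of the form \eqref{coordinateEq}. Each such coordinate equation has left-hand side an $\Aut(S)$-word equivalent to a varied coset word of $w$ on $S$ of the type described in Definition \ref{stronglyDef}(1); since $w$ is strongly multiplicity-bounding, each of these maps $S^d\to\Aut(S)$ is non-constant, hence has a fiber of proportion at most $1-|S|^{-d}$ (in fact at most $1-c_S$ for some $c_S<1$ depending only on $S$). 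The $m_n$ coordinate equations are "sufficiently independent" — the variable $s_{k,i}$ in the $i$-th equation is tied to coordinate $\chi^{-1}(i)$, and as $i$ ranges these hit distinct coordinates — so the fiber proportion over $S^{m_n}$ is bounded by roughly $(1-c_S)^{m_n/C}$ for a constant $C$ depending on the word length; this tends to $0$ as $m_n\to\infty$, contradicting that the proportion stayed $\geq\rho'$ for some $\rho'>0$ throughout the reduction. The delicate points I expect to fight with are: (a) controlling precisely how $\pi_w$ decays through the two applications of Lemma \ref{cosetWiseLem} and the passage to the socle, keeping the bound positive and uniform in $n$; (b) making rigorous the independence of the $m_n$ coordinate equations so that the product bound $(1-c_S)^{\Theta(m_n)}$ is legitimate rather than merely heuristic — one must track how the variation indices and the permutations $\chi_j$ distribute the $S$-variables among coordinates; and (c) handling the automorphism group structure $\Aut(S)\wr\Sym_n$ correctly, in particular the role of the permutation part $\psi$ and the necessary condition $w_{\Sym_n}(\sigma_1,\ldots,\sigma_d)=\psi$ from Lemma \ref{equationSystemLem}, which constrains but does not eliminate the reduction.
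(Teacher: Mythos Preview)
Your treatment of statement (1) is fine and matches the paper.

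For statement (3), your overall strategy (contraposition, build a family $G_n$) is correct and is what the paper does, but two details in your sketch would not work as written. First, Lemma \ref{cosetWiseLem} gives an \emph{upper} bound $\pi_w(G)\leq\pi_w(G/N)\gamma_w(N,G)$, so it cannot be used to force $\pi_w(G)$ to stay bounded below. Second, the candidate $G=S\wr\Sym_n$ does not contain the automorphisms $\alpha_1,\ldots,\alpha_d\in\Aut(S)$ (unless they happen to be inner), so the constant coset word map on $S$ does not translate into a large fiber there. The paper instead takes $G_n$ to be the subgroup of $\Aut(S)^n$ generated by $S^n$ together with the diagonal copy of $\Aut(S)$; then $w(\vec{s_1}\alpha_1^{(n)},\ldots,\vec{s_d}\alpha_d^{(n)})$ is constant as $\vec{s_k}$ range over $S^n$, giving directly a fiber of proportion at least $1/|\Out(S)|^d$ in $G_n^d$, while the multiplicity of $S$ in $G_n$ is $n$.

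For statement (2), there is a genuine gap in your plan. You assert that in $G_n/\Rad(G_n)$ the socle has ``at least $m_n$ factors isomorphic to $S$'', but this need not hold: copies of $S$ in the composition series of $H:=G_n/\Rad(G_n)$ can sit above $\Soc(H)$, inside the image of $H$ in $\Out(\Soc(H))=\prod_j\Out(S_j)\wr\Sym_{a_j}$, coming from the symmetric-group parts. A single pass to the socle does not capture them. The paper handles this by working with the full characteristic tower of Definition \ref{sequencesDef}: one has $\pi_w(G)\leq\prod_n\gamma_w(\C_n(G))$ (Lemma \ref{gammaLem}), and then bounds separately the number of levels $n$ at which $S$ occurs in $\C_n(G)$ and, at each such level, the multiplicity of $S$ there, both via Lemma \ref{nonUnivLem}. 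Your ``independence'' concern (b) is exactly what Lemma \ref{nonUnivLem} makes precise: one extracts $\lceil n/l^2\rceil$ coordinate equations with pairwise disjoint variable sets, yielding the bound $\gamma_w(S^n)\leq\epsilon(w,S)^{\lceil n/l^2\rceil}$. Note also that the paper argues directly rather than by contraposition, which avoids having to track a sequence $G_n$.
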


In particular, one can use the study of coset word maps on nonabelian finite simple groups to show that a given word is multiplicity-bounding. For the proof of Proposition \ref{stronglyProp}(2), we associate with every finite group some sequences of finite groups, defined as follows:

\begin{definition}\label{sequencesDef}
Let $G$ be a finite group. We recursively define a sequence $(G_n)_{n\in\IN}$ of finite groups as follows: $G_0:=G$ and $G_{n+1}:=(G_n/\Rad(G_n))/\Soc(G_n/\Rad(G_n))$. Moreover, for $n\in\IN^+$, we set:

\begin{enumerate}
\item $\R_n(G):=\Rad(G_n)$,
\item $\C_n(G):=\Soc(G_n/\Rad(G_n))$.
\end{enumerate}
\end{definition}

By this definition, it is easy to check that all the groups $\R_n(G)$ are solvable, all the groups $\C_n(G)$ are centerless completely reducible (i.e., direct products of nonabelian (finite) simple groups; see also \cite[3.3.18, p.~89]{Rob96a}) and that there exists $N=N(G)\in\IN$ such that $G_n=\{1\}$ (and thus also $\R_n(G)=\C_n(G)=\{1\}$) for all $n\geq N$. Moreover, every finite group $G$ has a characteristic series whose factors from bottom to top are, in this order, $\R_1(G),\C_1(G),\R_2(G),\C_2(G),\ldots$, continued until one of them becomes trivial for the first time. An application of Lemma \ref{cosetWiseLem} thus yields

\begin{lemma}\label{gammaLem}
Let $G$ be a finite group, $w$ a reduced word. Then $\pi_w(G)\leq\prod_{n=1}^{\infty}{\gamma_w(\C_n(G))}$.\qed
\end{lemma}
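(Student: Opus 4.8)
The plan is to telescope Lemma~\ref{cosetWiseLem} along the characteristic series of $G$ recalled just above the statement. Put $N:=N(G)$ and let $1=L_0\unlhd L_1\unlhd\cdots\unlhd L_{2N}=G$ be that series, so its consecutive quotients $L_i/L_{i-1}$ are, in order, $\R_1(G),\C_1(G),\R_2(G),\C_2(G),\ldots,\R_N(G),\C_N(G)$ (some of which may be trivial; this will not matter). First I would establish, by induction on $m$, the telescoped bound
\[
\pi_w(G)\leq\prod_{i=1}^{m}\gamma_w\bigl(L_i/L_{i-1},\,G/L_{i-1}\bigr)
\]
valid for any chain $1=L_0\unlhd L_1\unlhd\cdots\unlhd L_m=G$ of normal subgroups of $G$: the case $m=0$ is trivial, and the inductive step applies Lemma~\ref{cosetWiseLem} with the normal subgroup $L_1$ to get $\pi_w(G)\leq\pi_w(G/L_1)\cdot\gamma_w(L_1,G)$, then applies the induction hypothesis to $G/L_1$ equipped with the chain $1=L_1/L_1\unlhd\cdots\unlhd L_m/L_1$, and finally rewrites the resulting factors using the canonical isomorphisms $(L_i/L_1)/(L_{i-1}/L_1)\cong L_i/L_{i-1}$ and $(G/L_1)/(L_{i-1}/L_1)\cong G/L_{i-1}$ together with the fact that $\gamma_w$ is invariant under isomorphisms of pairs of groups.

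I would then treat the two kinds of factors separately. For the solvable factors $\R_k(G)=L_{2k-1}/L_{2k-2}$ nothing is needed: $\gamma_w(H,Q)\leq 1$ for all finite $H\leq Q$, since $\gamma_w(H,Q)$ is by definition a proportion inside $H^d$, so these factors may simply be dropped. For the semisimple factors $\C_k(G)=L_{2k}/L_{2k-1}$, a routine application of the correspondence and isomorphism theorems identifies the ambient quotient $G/L_{2k-1}$ with $G_{k-1}/\Rad(G_{k-1})$ and the factor $L_{2k}/L_{2k-1}$ with its socle $\Soc(G_{k-1}/\Rad(G_{k-1}))=\C_k(G)$. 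Since the solvable radical of $G_{k-1}/\Rad(G_{k-1})$ is trivial, this group acts faithfully by conjugation on $\C_k(G)$ (otherwise a minimal normal subgroup of it contained in the centralizer of $\C_k(G)$ would be an abelian, hence solvable, normal subgroup, contradicting triviality of the solvable radical), yielding an embedding $G_{k-1}/\Rad(G_{k-1})\hookrightarrow\Aut(\C_k(G))$ that restricts on the subgroup $\C_k(G)$ to the natural embedding $\C_k(G)\hookrightarrow\Aut(\C_k(G))$, using that $\C_k(G)$ is centerless, so $\C_k(G)\cong\Inn(\C_k(G))$.

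To finish I would record the monotonicity $\gamma_w(H,K)\leq\gamma_w(H,A)$ whenever $H\leq K\leq A$ are finite groups: for each $\vec{g}\in K^d$ the $(H,K)$-coset word map $w_{H,K}^{[\vec{g}]}$ becomes, upon enlarging its codomain from $K$ to $A$, exactly the $(H,A)$-coset word map $w_{H,A}^{[\vec{g}]}$ (both send $(h_1,\ldots,h_d)\mapsto w(h_1g_1,\ldots,h_dg_d)$ by the remark following Definition~\ref{cosetWordDef}), so their fibers coincide; hence $\Gamma_w(H,K)\leq\Gamma_w(H,A)$, as the supremum defining $\Gamma_w(H,A)$ is taken over the larger index set $A^d\supseteq K^d$. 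Applying this with $H=\C_k(G)$, $K=G_{k-1}/\Rad(G_{k-1})$ and $A=\Aut(\C_k(G))$ gives $\gamma_w(\C_k(G),\,G/L_{2k-1})\leq\gamma_w(\C_k(G),\Aut(\C_k(G)))=\gamma_w(\C_k(G))$. Substituting this and the bound $1$ for the solvable factors into the telescoped inequality yields $\pi_w(G)\leq\prod_{k=1}^{N}\gamma_w(\C_k(G))\leq\prod_{k=1}^{\infty}\gamma_w(\C_k(G))$, the last inequality holding because every further factor $\gamma_w(\C_k(G))$ with $k>N$ equals $\gamma_w(\{1\})=1$.

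The step I expect to be the main obstacle is the bookkeeping in the second paragraph: verifying carefully, via the correspondence and isomorphism theorems, that peeling off $L_1,L_2,\ldots$ one step at a time really reproduces the quotients $G/L_{2k-1}\cong G_{k-1}/\Rad(G_{k-1})$ with the prescribed socles, and keeping track of the fact that every $L_i$ is characteristic in $G$, hence normal, and with characteristic image in each intermediate quotient, which is exactly what makes Lemma~\ref{cosetWiseLem} applicable at every stage. The remaining steps are immediate from the definitions.
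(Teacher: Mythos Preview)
Your proof is correct and follows exactly the approach the paper intends: the paper simply remarks that $G$ has a characteristic series with factors $\R_1(G),\C_1(G),\R_2(G),\C_2(G),\ldots$ and that ``an application of Lemma~\ref{cosetWiseLem}'' then yields the result, leaving all of the details (the telescoping, discarding the solvable factors via $\gamma_w\leq 1$, the faithful action of $G_{k-1}/\Rad(G_{k-1})$ on its socle, and the resulting monotonicity $\gamma_w(\C_k(G),G/L_{2k-1})\leq\gamma_w(\C_k(G))$) implicit. You have supplied precisely those details, and they are all sound.
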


The following lemma, similar in spirit to \cite[Lemma 4.4]{Bor17b}, will also be used in the proof of Proposition \ref{stronglyProp}(2):

\begin{lemma}\label{nonUnivLem}
With assumptions and notation as in Lemma \ref{equationSystemLem}, set $\epsilon(w,S):=1-1/|S|^l$, and assume that none of the $n$ equations from the system of equations discussed is universally solvable over $S$. Then the fiber of $\vec{\beta}$ under $w_{S^n}^{[\vec{\alpha_1},\ldots,\vec{\alpha_n}]}$ has size at most $\epsilon(w,S)^{\lceil n/l^2\rceil}|S|^{nd}\leq\epsilon(w,S)|S|^{nd}$.
\end{lemma}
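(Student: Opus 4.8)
The plan is to estimate the fiber size coordinate by coordinate, grouping the $n$ coordinate equations into blocks on which the relevant variables are (largely) independent, so that the ``deficiency'' factor $\epsilon(w,S)$ coming from a single non-universally-solvable equation can be multiplied across the blocks. First I would recall the precise shape of the fiber from Lemma \ref{equationSystemLem}: it is the solution set in $(S^n)^d$ of the system whose $i$-th equation is \eqref{coordinateEq}, i.e.\ $(s_{\iota(1),\chi_1^{-1}(i)}\alpha_{\iota(1),\chi_1^{-1}(i)})^{\epsilon_1}\cdots(s_{\iota(l),\chi_l^{-1}(i)}\alpha_{\iota(l),\chi_l^{-1}(i)})^{\epsilon_l}=\beta_i$. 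The key combinatorial observation is that the $i$-th equation involves only the variables $s_{k,j}$ with $k=\iota(m)$ and $j=\chi_m^{-1}(i)$ for some $m\in\{1,\dots,l\}$; hence each equation ``touches'' at most $l$ of the $n$ columns (second indices), and therefore at most $l\cdot d$ of the $nd$ scalar variables, but more importantly at most $l$ columns. Consequently two equations indexed by $i$ and $i'$ are variable-disjoint whenever the sets $\{\chi_m^{-1}(i):1\le m\le l\}$ and $\{\chi_m^{-1}(i'):1\le m\le l\}$ are disjoint.

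The next step is a greedy selection argument to extract many pairwise variable-disjoint equations. Define, for each $i\in\{1,\dots,n\}$, the ``support'' $T(i):=\{\chi_1^{-1}(i),\dots,\chi_l^{-1}(i)\}\subseteq\{1,\dots,n\}$, a set of size at most $l$. Each column $j$ lies in $T(i)$ for at most $l$ values of $i$ (namely the $i\in\{\chi_m(j):1\le m\le l\}$), so the ``conflict graph'' on vertex set $\{1,\dots,n\}$, with $i\sim i'$ iff $T(i)\cap T(i')\neq\emptyset$, has maximum degree at most $l^2-1$. A greedy independent-set argument then yields an independent set $I\subseteq\{1,\dots,n\}$ of size at least $\lceil n/l^2\rceil$; for $i\in I$ the equations \eqref{coordinateEq} involve pairwise disjoint sets of scalar variables. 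Now I would bound the number of solutions: for each $i\in I$, by hypothesis the $i$-th equation is not universally solvable over $S$, and by the last sentence of Lemma \ref{equationSystemLem} its left-hand side is (equivalent to) a varied coset word of $w$ on $S$ in at most $l$ variables; hence the number of tuples of its $\le l$ scalar variables satisfying it is at most $|S|^l-1=\epsilon(w,S)|S|^l$ (at least one value of the right-hand-side target, namely $\beta_i$, is missed by at least one assignment — more precisely the map has a nonempty fiber complement, so some fiber, in particular the $\beta_i$-fiber, has size $\le|S|^l-1$). Multiplying over $i\in I$ and trivially bounding by $|S|^{\#\text{vars}}$ the contribution of all scalar variables not constrained in this way, we get that the fiber of $\vec\beta$ has size at most $\epsilon(w,S)^{|I|}|S|^{nd}\le\epsilon(w,S)^{\lceil n/l^2\rceil}|S|^{nd}$, and since $\epsilon(w,S)\in(0,1)$ and $\lceil n/l^2\rceil\ge 1$ this is at most $\epsilon(w,S)|S|^{nd}$.

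One technical point needs care in the counting: when I factor ``$\le|S|^l-1$ on the constrained variables, $|S|$ freely on the rest,'' I must make sure the constrained variable-sets for distinct $i\in I$ are genuinely disjoint (so the bounds multiply) and that every scalar variable $s_{k,j}$ is counted exactly once overall; this is exactly what the independent-set construction guarantees, together with the trivial bound $|S|^{nd-\sum_{i\in I}(\#\text{vars in eq.}~i)}$ for the complement. A second subtlety is the claim that a non-universally-solvable equation has $\beta_i$-fiber of size $\le|S|^l-1$: non-universal solvability means the associated word map $S^{(\le l)}\to S$ is not surjective, so \emph{some} fiber is empty; but we need the \emph{specific} fiber over $\beta_i$ to be small. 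In fact the clean statement is that a non-surjective map from a set of size $|S|^l$ has every fiber of size $\le|S|^l-1$ only if it is not injective onto, which is automatic here since the codomain $S$ is smaller than the domain for $l\ge1$... wait, that is not quite an argument either. The honest fix, which I expect to be the main obstacle to state crisply, is this: the left-hand side of \eqref{coordinateEq} as a function of its scalar variables is a coset word map; by non-universal solvability there exists a group element not in its image, hence by a pigeonhole/averaging over the (finitely many) choices of the ``inert'' constants one concludes the relevant fiber has size at most $|S|^l-1$ — essentially the argument of \cite[Lemma 4.4]{Bor17b}, which I would cite and adapt. Modulo this adaptation, the proof is the greedy-independent-set estimate above.
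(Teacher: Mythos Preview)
Your overall strategy---greedily select $\lceil n/l^2\rceil$ equations with pairwise disjoint variable sets and multiply the individual deficiency factors---is exactly the paper's approach; the paper phrases the greedy step as an explicit recursion (after $t$ choices at most $tl^2$ indices are excluded) rather than via a conflict-graph degree bound, but these are the same argument.

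The ``main obstacle'' you identify in your final paragraph is a phantom, coming from a misreading of the hypothesis. Here an equation being \emph{universally solvable over $S$} means that \emph{every} tuple of its $m_i$ scalar variables satisfies it, i.e.\ that the left-hand side is constantly $\beta_i$ on $S^{m_i}$. Thus ``not universally solvable'' says precisely that the $\beta_i$-fiber of the left-hand side map $S^{m_i}\to\Aut(S)$ is a \emph{proper} subset of $S^{m_i}$, hence of size at most $|S|^{m_i}-1=(1-|S|^{-m_i})|S|^{m_i}\le\epsilon(w,S)\,|S|^{m_i}$. There is no surjectivity issue at all, and no need to import anything from \cite[Lemma~4.4]{Bor17b}; the paper's reference to that lemma merely signals that the overall idea is the same. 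A related minor point: you should carry $m_i\le l$, not $l$, through the counting---the factor you actually multiply over $i\in I$ is the \emph{proportion} $(|S|^{m_i}-1)/|S|^{m_i}\le\epsilon(w,S)$, and then the free variables contribute $|S|^{nd-\sum_{i\in I}m_i}$, giving $\epsilon(w,S)^{|I|}|S|^{nd}$.
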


\begin{proof}
The proof idea is the same as for \cite[Lemma 4.4]{Bor17b}, but for the reader's convenience, we give the proof here separately.

If $m_i$ denotes the number of variables occurring in the $i$-th equation, then by assumption, that equation has at most $|S|^{m_i}-1=(1-1/|S|^{m_i})|S|^{m_i}\leq\epsilon(w,S)|S|^{m_i}$ many solutions. Our goal is to find indices $i_1,\ldots,i_{\lceil n/l^2\rceil}$ such that the corresponding equations from the system have pairwise disjoint variable sets. Indeed, once this is accomplished, it follows that the projection of the fiber of $\vec{\beta}$ under $w_{S^n}^{[\vec{\alpha_1},\ldots,\vec{\alpha_n}]}$ onto the coordinates corresponding to variables appearing in one of those $\lceil n/l^2\rceil$ many equations has size at most $\prod_{t=1}^{\lceil n/l^2\rceil}{\epsilon(w,S)|S|^{m_{i_t}}}$, hence proportion at most $\epsilon(w,S)^{\lceil n/l^2\rceil}$ within $S^{m_{i_1}+\cdots+m_{i_{\lceil n/l^2\rceil}}}$, and this is also an upper bound on the proportion of the fiber itself within $(|S|^n)^d$.

For finding these indices, we proceed by recursion. The first index, $i_1$, can be chosen arbitrarily from $\{1,\ldots,n\}$. Suppose now that we have already found indices $i_1,\ldots,i_t$ such that the corresponding equations have pairwise disjoint sets of second variable indices (which is sufficient for the variable sets to be pairwise disjoint), say $I_1,\ldots,I_t$, and note that all these sets are of cardinality at most $l$. It is sufficient to choose, as the value of $i_{t+1}$, any index $i$ not from $\bigcup_{r=1}^t\bigcup_{j=1}^l{\chi_j^{-1}[I_r]}$, a set of cardinality at most $tl^2$. Hence as long as $n>tl^2$, i.e., as long as $\lceil n/l^2\rceil\geq t+1$, we can choose a suitable index $i_{t+1}$. This concludes the proof.
\end{proof}

\begin{proof}[Proof of Proposition \ref{stronglyProp}]
For (1): This is clear by definition.

For (2): Fix $\rho\in\left(0,1\right]$, a finite group $G$ with $\pi_w(G)\geq\rho$ and a nonabelian finite simple group $S$. We need to show that the multiplicity of $S$ as a composition factor of $G$ is bounded from above in terms of $\rho$, $w$ and $S$. By the remarks after Definition \ref{sequencesDef}, it is clear that this multiplicity is the sum of the multiplicities of $S$ as a composition factor in the groups $\C_n(G)$, $n\in\IN^+$.

Denote by $N(G,S)$ the number of $n\in\IN^+$ such that $\C_n(G)$ has $S$ as a composition factor. We first show that $N(G,S)$ is bounded from above in terms of $\rho$, $w$ and $S$. To this end, let $R$ be any finite centerless completely reducible group having $S$ as a composition factor. Then we can write $R=\prod_{s=1}^r{S_s^{m_s}}$, where the $S_s$ are nonabelian finite simple groups, the $m_s\in\IN^+$, $r\in\IN^+$ and w.l.o.g.~$S_1=S$. By \cite[3.3.20, p.~90]{Rob96a}, every automorphism of $R$ splits as a product of automorphisms on the single factors $S_s^{m_s}$, from which it follows that $\gamma_w(R)\leq\prod_{s=1}^{r}{\gamma_w(S_s^{m_s})}\leq\gamma_w(S^{m_1})$. In view of Lemmata \ref{equationSystemLem} and \ref{nonUnivLem} and our assumption that $w$ is strongly multiplicity-bounding, it follows that $\gamma_w(R)\leq\epsilon(w,S)$, and so $\rho\leq\pi_w(G)\leq\epsilon(w,S)^{N(G,S)}$ by Lemma \ref{gammaLem}, which proves that indeed, $N(G,S)$ is bounded as asserted.

To complete the proof, it remains to show that for fixed $n\in\IN^+$ such that $S$ occurs as a composition factor in $\C_n(G)$, the multiplicity $m_n$ of $S$ in $\C_n(G)$ is bounded in terms of $\rho$, $w$ and $S$. By Lemma \ref{gammaLem}, the observations from the last paragraph and Lemma \ref{nonUnivLem}, we have $\rho\leq\gamma_w(\C_n(G))\leq\gamma_w(S^{m_n})\leq\epsilon(w,S)^{\lceil m_n/l^2\rceil}$, proving that $m_n$ is indeed bounded from above as required.

For (3): We show the contraposition. Assume that for some reduced word $w=w(X_1,\ldots,X_d)$, some nonabelian finite simple group $S$ and some $\alpha_1,\ldots,\alpha_d\in\Aut(S)$, $w(s_1\alpha_1,\ldots,s_d\alpha_d)=w(\alpha_1,\ldots,\alpha_d)$ for all $s_1,\ldots,s_d\in S$. For $n\in\IN^+$, denote by $G_n$ the subgroup of $\Aut(S)^n$ generated by $S^n$ and the \enquote{diagonal subgroup} $\{(\alpha,\ldots,\alpha)\mid\alpha\in\Aut(S)\}$; for brevity, for $\alpha\in\Aut(S)$, denote by $\alpha^{(n)}$ the $n$-tuple $(\alpha,\ldots,\alpha)$. Then clearly, $w(\vec{s_1}\alpha_1^{(n)},\ldots,\vec{s_d}\alpha_d^{(n)})=w(\alpha_1,\ldots,\alpha_d)^{(n)}=w(\alpha_1^{(n)},\ldots,\alpha_d^{(n)})$ for all $\vec{s_1},\ldots,\vec{s_d}\in S^n$, so that the word map $w_{G_n}$ has a fiber of size at least $|S|^n=\frac{1}{|\Out(S)|}|G_n|$. Since this construction works for all $n\in\IN^+$, we get that it is \emph{not} possible to bound the multiplicity of $S$ as a composition factor in a finite group $G$ such that $w_G$ has a fiber of size at least $\frac{1}{|\Out(S)|}|G|$. Hence $w$ is not multiplicity-bounding, as we wanted to show.
\end{proof}

\section{Some simple examples of very strongly multiplicity-bounding words}\label{sec3}

The following proposition allows us to give some examples of words that are very strongly multiplicity-bounding:

\begin{proposition}\label{examplesProp}
The following reduced words are very strongly multiplicity-bounding:

\begin{enumerate}
\item Words in which some variable occurs with multiplicity $1$.
\item Words of the form $w_1(X_1,\ldots,X_{d-1})X_dw_2(X_1,\ldots,X_{d-1})X_dw_3(X_1,\ldots,X_{d-1})$, where $w_1,w_2,w_3$ are reduced.
\item Words of the form $w_1(X_1,\ldots,X_{d-1})X_d^{\pm1}w_2(X_1,\ldots,X_{d-1})X_d^{\mp1}w_3(X_1,\ldots,X_{d-1})$, where $w_1,w_2,w_3$ are reduced and $w_2$ is very strongly multiplicity-bounding.
\end{enumerate}
\end{proposition}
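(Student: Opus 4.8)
The plan is to verify, in all three cases, the defining condition of Definition~\ref{stronglyDef}(2) directly: fix a nonabelian finite simple group $S$, a variation $v$ of $w$ (with $m$ variables, say), and an automorphism of $S$ attached to each variable of $v$; we must show that the resulting $(S,\Aut(S))$-coset word map $S^m\to\Aut(S)$ is not constant. The one elementary fact I would isolate at the start is: \emph{if some variable $X$ occurs exactly once in a reduced word $v$, then no coset word map of $v$ on $S$ is constant}; indeed, writing $v=v'X^{\varepsilon}v''$ with $X$ absent from $v',v''$ and freezing all variables except $X$, such a map has the shape $s\mapsto A(s\alpha)^{\varepsilon}B$ with $A,B,\alpha\in\Aut(S)$ fixed, and as $s$ runs over $S$ the element $s\alpha$ runs over the full coset $S\alpha$ of $\Inn(S)\cong S$ in $\Aut(S)$, so the map attains $|S|>1$ distinct values (inversion and one-sided translations being bijections). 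Part~(1) is then immediate: if $\mu_w(X)=1$, the single occurrence of $X$ gets the forced second index $1$, so $X$ still occurs exactly once in every variation of $w$.

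For part~(2), write $w=w_1X_dw_2X_dw_3$, so that $X_d$ occurs only in the two displayed spots. In a variation $v$ these two occurrences get second indices $i,j\in\{1,2\}$. If $i\neq j$, then $X_{d,i}$ occurs exactly once in $v$ and we conclude by the fact above. If $i=j=:j_0$, then $v=v_1\,X_{d,j_0}\,v_2\,X_{d,j_0}\,v_3$ with no $X_d$-variable in $v_1,v_2,v_3$; freezing all variables except $X_{d,j_0}=:s$ and writing $A,B,C\in\Aut(S)$ for the fixed values of the three blocks and $\alpha$ for the automorphism on $X_{d,j_0}$, it remains to see that $s\mapsto A(s\alpha)B(s\alpha)C$ is non-constant. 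Were it constant, comparing with $s=1$ and cancelling would give $s(\alpha B)s=\alpha B$ for all $s\in S$; identifying $S$ with $\Inn(S)$ this forces the automorphism $\alpha B$ to be the inversion map $s\mapsto s^{-1}$, which is not a homomorphism since $S$ is nonabelian --- a contradiction.

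For part~(3), write $w=w_1X_d^{\varepsilon}w_2X_d^{-\varepsilon}w_3$. Again, in a variation the two occurrences of $X_d$ either receive distinct second indices --- whence one of $X_{d,1},X_{d,2}$ occurs once in $v$ and the fact above applies --- or they receive a common index $j_0$, giving $v=v_1\,X_{d,j_0}^{\varepsilon}\,v_2\,X_{d,j_0}^{-\varepsilon}\,v_3$. Freezing all variables except $X_{d,j_0}=:s$, the map becomes $s\mapsto A\cdot{}^{(s\alpha)^{\varepsilon}}\!B\cdot C$, where ${}^{x}y:=xyx^{-1}$ and $B$ is the value of the middle block. Since $(s\alpha)^{\varepsilon}$ ranges over a coset of $\Inn(S)$ in $\Aut(S)$ as $s$ ranges over $S$, constancy of this map in $s$ forces $\Inn(S)$ to centralise a conjugate of $B$, whence $B=1$ because $C_{\Aut(S)}(\Inn(S))=1$. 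Hence, if the coset word map of $v$ were constant, then for every assignment to the remaining variables the middle block would evaluate to $1$; in other words the coset word map coming from the block $v_2$, with the automorphism labels it inherits from $v$, would be identically trivial. Finally, since $v_2$ contains only $\mu_{w_2}(X_i)$ occurrences of each $X_i$, it uses at most $\mu_{w_2}(X_i)$ distinct second indices for $X_i$, so relabelling these into $\{1,\dots,\mu_{w_2}(X_i)\}$ turns $v_2$ into a genuine variation of $w_2$; with the inherited labels, its coset word map is then equivalent, in the sense of Definition~\ref{variationDef}(4) and Lemma~\ref{equivalentLem}, to a varied coset word map of $w_2$ on $S$ --- which we have just shown to be constant, contradicting the hypothesis that $w_2$ is very strongly multiplicity-bounding.

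The genuinely non-formal ingredients are the two structural facts about $\Aut(S)$ for centerless $S$ used above: that inversion is not a homomorphism of a nonabelian group, and that $C_{\Aut(S)}(\Inn(S))=1$. I expect the one place that calls for a little care in writing is the closing reduction of part~(3) --- making precise that the $w_2$-block of a variation of $w$, carrying its inherited automorphism labels, is equivalent to a bona fide varied coset word map of $w_2$ on $S$, so that the hypothesis on $w_2$ genuinely applies. The rest is routine manipulation around the elementary fact stated at the outset.
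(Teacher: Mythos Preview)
Your proof is correct and follows essentially the same approach as the paper's: parts (1) and (2) are argued identically (isolating a single occurrence, respectively reducing to the fact that inversion is not an automorphism of a nonabelian group), and part (3) is the contrapositive of the paper's argument, using the same fixed-point/centralizer fact (the paper packages this as Proposition~\ref{fiberProp2}). Your explicit relabeling step at the end of part (3), showing that the middle block is equivalent to a genuine varied coset word map of $w_2$, makes precise a point the paper leaves implicit.
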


For the proofs of Proposition \ref{examplesProp}(2,3), we will use information on the fiber structure of certain families of functions on groups. The first result which we will use is \cite[Proposition 5.1.1]{Bor17a}, whose formulation we give here for the reader's convenience. It uses the notation $\P_{-1}(\alpha)$ for an automorphism $\alpha$ of some group $G$, which denotes the set of elements $g\in G$ that $\alpha(g)=g^{-1}$:

\begin{proposition}\label{fiberProp1}
Let $G$ be a group, $\alpha$ an automorphism of $G$, and fix $c\in G$. Consider the map $\f_{c,\alpha}:G\rightarrow G,g\mapsto gc\alpha(g)$. Then for $g_1,g_2\in G$, we have $\f_{c,\alpha}(g_1)=\f_{c,\alpha}(g_2)$ if and only if $g_2\in\P_{-1}(\tau_{g_1}\circ\tau_c\circ\alpha)g_1$. In other words, the fibers of $\f_{c,\alpha}$ are just the subsets of $G$ of the form $\P_{-1}(\tau_g\circ\tau_c\circ\alpha)g$, $g\in G$.\qed
\end{proposition}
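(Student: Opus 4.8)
The plan is to prove the statement by a short, direct computation: reparametrize the second point as $g_2 = h g_1$ and simplify the equation $\f_{c,\alpha}(g_1) = \f_{c,\alpha}(g_2)$ until the constraint on $h$ is manifestly the membership condition $h \in \P_{-1}(\tau_{g_1} \circ \tau_c \circ \alpha)$.

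Concretely, I would first fix $g_1, g_2 \in G$ and set $h := g_2 g_1^{-1}$, so that $g_2 = h g_1$ and $h$ ranges over all of $G$ as $g_2$ does. Using that $\alpha$ is a homomorphism, $\f_{c,\alpha}(g_2) = h g_1 c \,\alpha(h)\, \alpha(g_1)$; equating this with $\f_{c,\alpha}(g_1) = g_1 c\, \alpha(g_1)$ and cancelling the common right factor $\alpha(g_1)$ gives $g_1 c = h g_1 c\, \alpha(h)$. Solving this for $h^{-1}$ — multiply on the left by $h^{-1}$ and on the right by $(g_1 c)^{-1}$ — rewrites it as $h^{-1} = (g_1 c)\,\alpha(h)\,(g_1 c)^{-1}$.

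The remaining step is to recognize the right-hand side as the image of $h$ under a composition of conjugations: since $\tau_{g_1 c} = \tau_{g_1} \circ \tau_c$, the displayed identity reads $h^{-1} = (\tau_{g_1} \circ \tau_c \circ \alpha)(h)$, which is exactly the defining condition $h \in \P_{-1}(\tau_{g_1} \circ \tau_c \circ \alpha)$. Every manipulation above is reversible, so $\f_{c,\alpha}(g_1) = \f_{c,\alpha}(g_2)$ holds if and only if $g_2 = h g_1$ for some $h \in \P_{-1}(\tau_{g_1} \circ \tau_c \circ \alpha)$, which is the asserted equivalence. For the description of the fibers, I would note that $\tau_{g_1} \circ \tau_c \circ \alpha$ is an automorphism of $G$, hence fixes $1$, so $g_1$ itself lies in $\P_{-1}(\tau_{g_1} \circ \tau_c \circ \alpha)\,g_1$; thus the fiber of $\f_{c,\alpha}$ through an arbitrary $g$ equals $\P_{-1}(\tau_g \circ \tau_c \circ \alpha)\,g$, and conversely each set of this form is such a fiber.

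I do not expect any genuine obstacle. The single point requiring care is the bookkeeping with composition conventions, namely keeping track that the three maps stack up as $\tau_{g_1} \circ \tau_c \circ \alpha$ in exactly that order (and with $\alpha$, not $\alpha^{-1}$); this is precisely what is pinned down by isolating $h^{-1}$ on the correct side and conjugating by $g_1 c$ rather than by its inverse in the step above.
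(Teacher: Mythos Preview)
Your computation is correct. Note that the paper does not actually prove this proposition: it is quoted from \cite[Proposition 5.1.1]{Bor17a} and closed with a \qed, so there is no in-paper argument to compare against; your direct verification via the substitution $g_2 = h g_1$ is exactly the kind of elementary manipulation one expects for such a result.
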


The second result of this nature which we will use is the following:

\begin{proposition}\label{fiberProp2}
Let $G$ be a group, $\alpha$ an automorphism of $G$, and fix $c\in G$. Consider the map $\h_{c,\alpha}:G\rightarrow G,g\mapsto g^{-1}c\alpha(g)$. Then for $g_1,g_2\in G$, we have $\h_{c,\alpha}(g_1)=\h_{c,\alpha}(g_2)$ if and only if $g_2\in\fix(\tau_c\circ\alpha)g_1$. In other words, the fibers of $\h_{c,\alpha}$ are just the right cosets of $\fix(\tau_c\circ\alpha)$ in $G$.
\end{proposition}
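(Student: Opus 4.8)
The plan is to analyze the fibers of $\h_{c,\alpha}$ directly by manipulating the defining equation. Suppose $g_1, g_2 \in G$ satisfy $\h_{c,\alpha}(g_1) = \h_{c,\alpha}(g_2)$, that is, $g_1^{-1} c\alpha(g_1) = g_2^{-1} c\alpha(g_2)$. First I would rearrange this to isolate a relationship between $g_1$ and $g_2$: multiply on the left by $g_2$ and on the right by $\alpha(g_1)^{-1}$ to obtain $g_2 g_1^{-1} c = c \alpha(g_2) \alpha(g_1)^{-1} = c\alpha(g_2 g_1^{-1})$. Writing $h := g_2 g_1^{-1}$ (so that $g_2 = h g_1$, which is exactly the statement $g_2 \in \{\text{something}\} g_1$), this reads $hc = c\alpha(h)$, i.e., $c^{-1} h c = \alpha(h)$, i.e., $h = c\alpha(h)c^{-1} = (\tau_c \circ \alpha)(h)$. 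So $h \in \fix(\tau_c \circ \alpha)$, giving $g_2 \in \fix(\tau_c\circ\alpha) g_1$.

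Conversely, I would run the same computation backwards: if $g_2 = h g_1$ with $h \in \fix(\tau_c\circ\alpha)$, then $(\tau_c\circ\alpha)(h) = h$ unwinds to $hc = c\alpha(h)$, and then left-multiplying by $g_2^{-1}$ and right-multiplying by $\alpha(g_1)$ recovers $g_1^{-1} c \alpha(g_1) = g_2^{-1} c \alpha(g_2)$, i.e., $\h_{c,\alpha}(g_1) = \h_{c,\alpha}(g_2)$. This shows the fiber of $\h_{c,\alpha}$ through $g_1$ is precisely the right coset $\fix(\tau_c\circ\alpha) g_1$, which is the assertion. (One should double-check that $\tau_c \circ \alpha$ is genuinely an automorphism of $G$ so that \enquote{$\fix$} makes sense, but this is immediate since $\tau_c$ and $\alpha$ are both automorphisms.)

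I do not expect any real obstacle here: the proposition is essentially a one-line algebraic identity dressed up in the language of fibers, and the only care needed is bookkeeping with left versus right cosets (the answer is a \emph{right} coset $\fix(\tau_c\circ\alpha)g_1$ because $h$ was defined as $g_2 g_1^{-1}$, acting on the left of $g_1$). The structural content — and the reason this and Proposition~\ref{fiberProp1} are worth stating together — is that $\h_{c,\alpha}$ is, up to the twist by $c$ and $\alpha$, a \enquote{difference-type} map whose fibers form a coset partition, so all fibers have the same size $|\fix(\tau_c\circ\alpha)|$; this uniformity is what will later make it possible to control the proportion of a largest fiber of a coset word map of the shape appearing in Proposition~\ref{examplesProp}(3).
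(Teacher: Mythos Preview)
Your proof is correct and follows exactly the same approach as the paper's own proof, which also reduces $g_1^{-1}c\alpha(g_1)=g_2^{-1}c\alpha(g_2)$ by an equivalence transformation to $g_2g_1^{-1}=(\tau_c\circ\alpha)(g_2g_1^{-1})$. Your write-up is simply more explicit about the intermediate manipulations and the converse direction.
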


For $c=1$, this yields the well-known fact that the fibers of a map $G\rightarrow G$ of the form $g\mapsto g^{-1}\alpha(g)$, $\alpha$ an automorphism of $G$ are just the right cosets of $\fix(\alpha)$ in $G$.

\begin{proof}[Proof of Proposition \ref{fiberProp2}]
By a simple equivalence transformation, we have that $g_1^{-1}c\alpha(g_1)=g_2^{-1}c\alpha(g_2)$ if and only if $g_2g_1^{-1}=c\alpha(g_2g_1^{-1})c^{-1}=(\tau_c\circ\alpha)(g_2g_1^{-1})$, which proves the assertion.
\end{proof}

\begin{proof}[Proof of Proposition \ref{examplesProp}]
For (1): Let $w=w(X_1,\ldots,X_d)$ be a reduced word in $d$ distinct variables, and w.l.o.g., assume that $X_1$ occurs with multiplicity $1$ in $w$. Let $w'=w'(X_{1,1},X_{k_2,m_2},\ldots,X_{k_e,m_e})$ be any variation of $w$ containing precisely $e$ distinct variables, and note that $w'$ contains the variable $X_{1,1}$ with multiplicity $1$. Finally, let $S$ be any nonabelian finite simple group and $f:S^e\rightarrow\Aut(S)$ some coset word map of $w'$ on $S$. Then for any fixed $\beta\in\Aut(S)$ and $s_2,\ldots,s_e\in S$, in the equation $f(x,s_2,\ldots,s_e)=\beta$, the variable $x$ occurs precisely once as a factor in the product on the left-hand side and thus can be isolated, showing that the equation has at most one solution $x\in S$. But $|S|>1$, so the equation cannot be universally solvable, as required.

For (2): Let $w'=w'(X_{k_1,m_1},\ldots,X_{k_{e-1},m_{e-1}},X_{d,1})$ be a variation of $w$ in precisely $e$ distinct variables where w.l.o.g., both occurrences of $X_d$ in $w$ get the second index $1$ (if they get different second indices, then $X_{d,1}$ occurs in $w'$ with multiplicity $1$, and we can conclude as in the argument for point (1)). Let $S$ be any nonabelian finite simple group and $f:S^e\rightarrow\Aut(S)$ some coset word map of $w'$ on $S$. For any fixed $\beta\in\Aut(S)$ and $s_1,\ldots,s_{e-1}\in S$, consider the univariate equation $f(s_1,\ldots,s_{e-1},x)=\beta$, whose left-hand side is some $\Aut(S)$-word in the single variable $x$ of the form $\alpha_1x\alpha_2x\alpha_3$ with $\alpha_i\in\Aut(S)$ fixed. Hence the equation is equivalent to $x\alpha_2x=\alpha_1^{-1}\beta\alpha_3^{-1}$, and by Proposition \ref{fiberProp1}, if this equality holds for all $x\in S$, then the automorphism $\alpha_2$ of $S$ maps all elements of $S$ to their inverses. In other words, the inversion map on $S$ is an automorphism of $S$ then, but this is only possible if $S$ is abelian, a contradiction.

For (3): Since a reduced word is strongly multiplicity-bounding if and only if its inverse is (see also Lemma \ref{permSubLem}), we can assume w.l.o.g. that the first occurrence of $X_d$ in $w$ has exponent $-1$ (and the second exponent $1$). Again, let $w'=w'(X_{k_1,m_1},\ldots,X_{k_{e-1},m_{e-1}},X_{d,1})$ be a variation of $w$ in precisely $e$ distinct variables where both occurrences of $X_d^{\pm1}$ in $w$ got the second index $1$. Let $S$ be any nonabelian finite simple group and $f:S^e\rightarrow\Aut(S)$ some coset word map of $w'$ on $S$. By the assumption that the segment $w_2(X_1,\ldots,X_{d-1})$ of $w$ is strongly multiplicity-bounding, we can fix $s_1,\ldots,s_{e-1}\in S$ such that the univariate $\Aut(S)$-word $f(s_1,\ldots,s_{e-1},x)$ is of the form $\alpha_1x^{-1}\alpha_2x\alpha_3$ with $\alpha_i\in\Aut(S)$ and $\alpha_2\not=\id$. Fix any $\beta\in\Aut(S)$. Then as before, the equation $f(s_1,\ldots,s_{e-1},x)=\beta$ is equivalent to $x^{-1}\alpha_2x=\alpha_1^{-1}\beta\alpha_3^{-1}$, and by Proposition \ref{fiberProp2}, if this holds true for all $x\in S$, then the automorphism $\alpha_2$ of $S$ fixes all elements of $S$, a contradiction.
\end{proof}

Using Proposition \ref{examplesProp}, one can show the following stronger form of Theorem \ref{mainTheo}(2):

\begin{corollary}\label{examplesCor}
The words $\gamma_n$ defined in Theorem \ref{mainTheo}(2) are all very strongly multiplicity-bounding.
\end{corollary}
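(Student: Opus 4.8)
The plan is to prove the claim by induction on $n$, using the recursive structure $\gamma_{n+1} = [x_{n+1}, \gamma_n] = x_{n+1}\gamma_n x_{n+1}^{-1}\gamma_n^{-1}$ together with Proposition \ref{examplesProp}. For the base case $n=1$, the word $\gamma_1 = x_1$ has the single variable $x_1$ occurring with multiplicity $1$, so $\gamma_1$ is very strongly multiplicity-bounding by Proposition \ref{examplesProp}(1). (One might also handle $n=2$, $\gamma_2 = [x_2,x_1]$, directly via Proposition \ref{examplesProp}(1) since $x_2$ occurs with multiplicity $1$ there, but this is subsumed by the inductive step.)

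For the inductive step, suppose $\gamma_n$ is very strongly multiplicity-bounding (hence also strongly multiplicity-bounding by Proposition \ref{stronglyProp}(1)). I want to write $\gamma_{n+1}$ in the shape covered by Proposition \ref{examplesProp}(3), namely
\[
w_1(x_1,\ldots,x_n)\, x_{n+1}^{\pm1}\, w_2(x_1,\ldots,x_n)\, x_{n+1}^{\mp1}\, w_3(x_1,\ldots,x_n),
\]
with $w_2$ very strongly multiplicity-bounding. Here the variable playing the role of "$X_d$" is $x_{n+1}$, which occurs exactly twice in $\gamma_{n+1}$, once with exponent $+1$ and once with exponent $-1$. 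Reading off $\gamma_{n+1} = x_{n+1}\,\gamma_n\,x_{n+1}^{-1}\,\gamma_n^{-1}$, I would take $w_1$ to be the empty word, $w_2 := \gamma_n(x_1,\ldots,x_n)$, and $w_3 := \gamma_n(x_1,\ldots,x_n)^{-1}$. The key point is that $x_{n+1}$ does not appear in $\gamma_n$, so $w_2$ and $w_3$ are genuinely words in $x_1,\ldots,x_n$ only, and $w_2 = \gamma_n$ is very strongly multiplicity-bounding (in particular strongly multiplicity-bounding) by the induction hypothesis, which is exactly the hypothesis Proposition \ref{examplesProp}(3) requires. (Note Proposition \ref{examplesProp}(3) only needs $w_2$ to be strongly multiplicity-bounding, so the induction would even go through with that weaker inductive hypothesis; but since we get "very strongly" for free at each stage, we may as well carry it.) Applying Proposition \ref{examplesProp}(3) then shows $\gamma_{n+1}$ is very strongly multiplicity-bounding, completing the induction.

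The only subtlety I anticipate is a bookkeeping one: making sure $\gamma_{n+1}$ really has the literal form demanded by Proposition \ref{examplesProp}(3) after reduction. Since $\gamma_n$ ends in a letter not equal to $x_{n+1}^{\pm1}$, no cancellation occurs at the junctions $x_{n+1}\gamma_n$, $\gamma_n x_{n+1}^{-1}$, or $x_{n+1}^{-1}\gamma_n^{-1}$, so $\gamma_{n+1}$ as written is already reduced and the decomposition with $w_1$ empty, $w_2 = \gamma_n$, $w_3 = \gamma_n^{-1}$ is legitimate; "$w_1$ empty" is harmless as the proposition allows $w_1, w_3$ to be arbitrary reduced words, including the empty one. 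So there is no real obstacle here — the corollary is essentially a direct unwinding of the recursion against Proposition \ref{examplesProp}(3), and the main thing to get right is simply identifying which segment is the distinguished middle word $w_2$.
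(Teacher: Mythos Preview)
Your proof is correct and follows exactly the same approach as the paper: induction on $n$, with the base case handled by Proposition~\ref{examplesProp}(1) and the inductive step by Proposition~\ref{examplesProp}(3) applied to the decomposition $\gamma_{n+1}=x_{n+1}\cdot\gamma_n\cdot x_{n+1}^{-1}\cdot\gamma_n^{-1}$. One small slip: as stated, Proposition~\ref{examplesProp}(3) requires $w_2$ to be \emph{very} strongly multiplicity-bounding, not merely strongly; but since your induction hypothesis already gives you the stronger property, this does not affect the argument.
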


\begin{proof}
Proceed by induction on $n$. For $n=1$, this is clear by Proposition \ref{examplesProp}(1), and for the induction step, use Proposition \ref{examplesProp}(3).
\end{proof}

\section{Reduction arguments}\label{sec4}

The main result of this section is Proposition \ref{mainReductionProp} below, which reduces proving that a given reduced word $w$ is very strongly multiplicity-bounding to a finite problem. This builds on results and techniques from Nikolov's recent paper \cite{Nik16a}.

First, we recall the following notion and notation from \cite[Definition 1.2.1]{Bor17b} and note its connection with coset word maps:

\begin{definition}\label{automorphicWordDef}
Let $G$ be a group, $w=w(X_1,\ldots,X_d)$ a reduced word of length $l$ in $d$ distinct variables. Write $w=x_1^{\epsilon_1}\cdots x_l^{\epsilon_l}$, where each $x_j\in\{X_1,\ldots,X_d\}$ and each $\epsilon_j\in\{\pm1\}$, and denote by $\iota$ the unique function $\{1,\ldots,l\}\rightarrow\{1,\ldots,d\}$ such that $x_j=X_{\iota(j)}$ for $j=1,\ldots,l$. Finally, let $\alpha_1,\ldots,\alpha_l$ be automorphisms of $G$. Then the \emph{automorphic word map of $w$ on $G$ with respect to $\alpha_1,\ldots,\alpha_l$}, denoted by $w_G^{(\alpha_1,\ldots,\alpha_l)}$, is the map $G^d\rightarrow G$ sending $(g_1,\ldots,g_d)\mapsto\alpha_1(g_{\iota(1)})^{\epsilon_1}\cdots\alpha_l(g_{\iota(l)})^{\epsilon_l}$.
\end{definition}

\begin{lemma}\label{automorphicCosetLem}
Let $G\unlhd O$ be groups, $w=w(X_1,\ldots,X_d)$ a reduced word of length $l$ in $d$ distinct variables, $x_1,\ldots,x_l$, $\epsilon_1,\ldots,\epsilon_l$ and $\iota$ as in Definition \ref{automorphicWordDef}. Moreover, let $o_1,\ldots,o_d\in O$, denote by $\varphi:O\rightarrow\Aut(G)$ the conjugation action of $O$ on $G$ and set, for $j=1,\ldots,l$, $\omega_j:=\begin{cases}\varphi(o_{\iota(1)}^{\epsilon_1}\cdots o_{\iota(j-1)}^{\epsilon_{j-1}}), & \text{if }\epsilon_j=1, \\ \varphi(o_{\iota(1)}^{\epsilon_1}\cdots o_{\iota(j)}^{\epsilon_j}), & \text{if }\epsilon_j=-1\end{cases}$. Then for all $g_1,\ldots,g_d$, we have the following equality relating the $(G,O)$-coset word map $w_{G,O}^{[o_1,\ldots,o_d]}:G^d\rightarrow O$, the automorphic word map $w_G^{(\omega_1,\ldots,\omega_l)}:G^d\rightarrow G$ and the word map $w_O:O^d\rightarrow O$:

\[
w_{G,O}^{[o_1,\ldots,o_d]}(g_1,\ldots,g_d)=w_G^{(\omega_1,\ldots,\omega_l)}(g_1,\ldots,g_d)\cdot w_O(o_1,\ldots,o_d).
\]

In particular, there is a cardinality-preserving bijection between the fibers of $w_{G,O}^{[o_1,\ldots,o_d]}$ and the fibers of $w_G^{(\omega_1,\ldots,\omega_l)}$.
\end{lemma}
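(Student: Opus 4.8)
The plan is to unwind both sides of the asserted equality directly from the definitions and compare them factor by factor. First I would recall that, by Definition \ref{cosetWordDef}(1,2) and the remark following it, the value $w_{G,O}^{[o_1,\ldots,o_d]}(g_1,\ldots,g_d)$ is obtained by substituting $X_j\mapsto g_j$ into the $O$-word $w_{F(X_1,\ldots,X_d)\ast O}(X_1o_1,\ldots,X_do_d)$, so that
\[
w_{G,O}^{[o_1,\ldots,o_d]}(g_1,\ldots,g_d)=(g_{\iota(1)}o_{\iota(1)})^{\epsilon_1}\cdots(g_{\iota(l)}o_{\iota(l)})^{\epsilon_l}\in O.
\]
The core of the argument is then a ``move all the $o$'s to the right'' computation, entirely parallel to the transformation carried out in the proof of Lemma \ref{equationSystemLem}: starting from the rightmost factor and working left, each block $(g_{\iota(j)}o_{\iota(j)})^{\epsilon_j}$ contributes a factor $g_{\iota(j)}^{\epsilon_j}$ (or $o_{\iota(j)}^{-1}g_{\iota(j)}^{-1}$ when $\epsilon_j=-1$) which must be conjugated past the product of $o$-factors lying to its right. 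Since $G\unlhd O$, conjugating $g_{\iota(j)}^{\pm1}\in G$ by an element of $O$ again lands in $G$, and conjugation by $o_{\iota(1)}^{\epsilon_1}\cdots o_{\iota(j-1)}^{\epsilon_{j-1}}$ (resp. $o_{\iota(1)}^{\epsilon_1}\cdots o_{\iota(j)}^{\epsilon_j}$) is exactly $\varphi$ applied to that element, i.e.\ $\omega_j$. The case split in the definition of $\omega_j$ mirrors precisely the case split for $\chi_j$ in Lemma \ref{equationSystemLem}: when $\epsilon_j=1$ the factor $o_{\iota(j)}$ is already to the right of $g_{\iota(j)}$ and does not act on it, whereas when $\epsilon_j=-1$ the factor $o_{\iota(j)}^{-1}$ sits to the left of $g_{\iota(j)}^{-1}$ and must be counted in the conjugating element.

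Carrying this out yields
\[
(g_{\iota(1)}o_{\iota(1)})^{\epsilon_1}\cdots(g_{\iota(l)}o_{\iota(l)})^{\epsilon_l}=\omega_1(g_{\iota(1)})^{\epsilon_1}\cdots\omega_l(g_{\iota(l)})^{\epsilon_l}\cdot o_{\iota(1)}^{\epsilon_1}\cdots o_{\iota(l)}^{\epsilon_l},
\]
and the first factor on the right is by Definition \ref{automorphicWordDef} exactly $w_G^{(\omega_1,\ldots,\omega_l)}(g_1,\ldots,g_d)$, while the trailing product of $o$'s is $w_O(o_1,\ldots,o_d)$; this is the claimed identity. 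The cleanest way to make the displayed induction rigorous is to induct on $l$, peeling off the first factor $(g_{\iota(1)}o_{\iota(1)})^{\epsilon_1}$ and applying the inductive hypothesis to the shorter word $x_2^{\epsilon_2}\cdots x_l^{\epsilon_l}$ with the shifted tuple of $o$'s; alternatively one can verify the identity of elements of $O$ by a direct telescoping argument.

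Finally, for the ``in particular'' clause: the identity just proved says that $w_{G,O}^{[o_1,\ldots,o_d]}$ is obtained from $w_G^{(\omega_1,\ldots,\omega_l)}$ by post-composing with the fixed right-translation $t\mapsto t\cdot w_O(o_1,\ldots,o_d)$ of $O$ (note $w_G^{(\omega_1,\ldots,\omega_l)}$ takes values in $G$, but the right-translation is by a fixed element of $O$ and is a bijection of $O$). Right-translation by a fixed element is a bijection, so it carries fibers to fibers bijectively and in particular preserves their cardinalities; this gives the cardinality-preserving bijection between the fiber sets. I do not anticipate a genuine obstacle here — the only point requiring care is bookkeeping the conjugating elements and the $\epsilon_j=\pm1$ case distinction correctly, exactly as in the proof of Lemma \ref{equationSystemLem}, and making sure one uses $G\unlhd O$ so that all the conjugated $g$-factors remain in $G$.
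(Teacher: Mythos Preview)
Your proposal is correct and follows essentially the same approach as the paper's own proof: moving all the $o_{\iota(j)}^{\epsilon_j}$ to the right and replacing the $g_{\iota(j)}^{\pm1}$ by the appropriate $O$-conjugates, exactly parallel to the proof of Lemma~\ref{equationSystemLem}, and then observing for the ``in particular'' that $w_O(o_1,\ldots,o_d)$ does not depend on the $g_k$. Your write-up is in fact more detailed than the paper's, which dispatches the argument in two sentences.
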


\begin{proof}
This is very similar to the proof of Lemma \ref{equationSystemLem}: It follows by moving, in the evaluation $w_{G,O}^{[o_1,\ldots,o_d]}$, all occurrences of the $o_k^{\pm1}$ to the right, replacing the $g_k^{\pm1}$ by appropriate $O$-conjugates. The \enquote{In particular} follows since $w_O(o_1,\ldots,o_d)$ does not depend on the $g_k$.
\end{proof}

We want techniques to show that constancy of some coset word map on a nonabelian finite simple group $S$ leads to constancy of a suitable coset word map on a subgroup $U$ of $S$. In most applications, $U$ will itself be nonabelian simple, but there will also be cases where it is not even centerless. Because of this, it will be more natural to work with $(G,\Hol(G))$-coset word maps in general here.

We note that for centerless groups $G$, there is a constancy-preserving duality between coset word maps of $w$ on $G$ (i.e., $(G,\Aut(G))$-coset word maps of $w$) and $(G,\Hol(G))$-coset word maps:

\begin{lemma}\label{dualityLem}
Let $G$ be a centerless group, $w$ a reduced word in $d$ distinct variables and $\alpha_1,\ldots,\alpha_d\in\Aut(G)$. Then the coset word map $w_{G,\Aut(G)}^{[\alpha_1,\ldots,\alpha_d]}:G^d\rightarrow\Aut(G)$ is constant if and only if the coset word map $w_{G,\Hol(G)}^{[\alpha_1,\ldots,\alpha_d]}:G^d\rightarrow\Hol(G)$ is constant.
\end{lemma}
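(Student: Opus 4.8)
The plan is to make the conjugation action of $G$ on itself explicit and observe that both coset word maps can be read off from the same underlying automorphic word map on $G$, so constancy of one is equivalent to constancy of the other. Concretely, since $G$ is centerless, $\Aut(G)$ naturally contains $\Inn(G)\cong G$, and $\Hol(G)=G\rtimes\Aut(G)$; we regard $G$ as a subgroup of $\Hol(G)$ via the first factor and of $\Aut(G)$ via inner automorphisms, and we note that the conjugation action of $\Hol(G)$ on its normal subgroup $G$ is exactly the canonical action of $\Aut(G)$, restricted appropriately. The key tool is Lemma \ref{automorphicCosetLem}: applied with the normal subgroup $G$ inside the overgroup $O$, it expresses $w_{G,O}^{[o_1,\ldots,o_d]}(g_1,\ldots,g_d)$ as $w_G^{(\omega_1,\ldots,\omega_l)}(g_1,\ldots,g_d)\cdot w_O(o_1,\ldots,o_d)$, where the automorphisms $\omega_j$ depend only on the elements $o_k$ (through the conjugation action $\varphi$) and not on the $g_k$. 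Since right multiplication by the fixed element $w_O(o_1,\ldots,o_d)$ is a bijection, $w_{G,O}^{[o_1,\ldots,o_d]}$ is constant if and only if $w_G^{(\omega_1,\ldots,\omega_l)}$ is constant.

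The remaining point is that the tuple of automorphisms $(\omega_1,\ldots,\omega_l)$ produced by Lemma \ref{automorphicCosetLem} is the \emph{same} whether we take $O=\Aut(G)$ with $o_k=\alpha_k$, or $O=\Hol(G)$ with $o_k=\alpha_k$ (viewing $\alpha_k$ inside the second factor of $\Hol(G)=G\rtimes\Aut(G)$). This is because in both cases the relevant conjugation action $\varphi\colon O\to\Aut(G)$ sends $\alpha_k$ to $\alpha_k$ itself: for $O=\Aut(G)$ this is tautological, and for $O=\Hol(G)$ the copy of $\Aut(G)$ sitting as the complement acts on the normal subgroup $G$ exactly by the given automorphisms. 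Hence the words $\omega_1,\ldots,\omega_l$ defined by the case distinction in Lemma \ref{automorphicCosetLem} coincide in the two situations, so $w_{G,\Aut(G)}^{[\alpha_1,\ldots,\alpha_d]}$ is constant $\iff w_G^{(\omega_1,\ldots,\omega_l)}$ is constant $\iff w_{G,\Hol(G)}^{[\alpha_1,\ldots,\alpha_d]}$ is constant, which is the claim.

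I expect the only subtle point — and thus the main obstacle — to be bookkeeping: one must check carefully that the overgroup $\Aut(G)$ does contain $G$ as a normal subgroup (true since $G$ is centerless, so $\Inn(G)\unlhd\Aut(G)$ and $\Inn(G)\cong G$), that $\Hol(G)$ likewise contains a normal copy of $G$, and that the two embeddings of $\alpha_k$ induce the same element of $\Aut(G)$ under the respective conjugation maps. None of this requires computation; it is purely a matter of identifying the relevant actions correctly. Once that identification is in place, the proof is just two applications of Lemma \ref{automorphicCosetLem} together with the observation that the correction factor $w_O(o_1,\ldots,o_d)$ is a fixed group element and hence harmless for the constancy question.
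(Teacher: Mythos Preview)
Your proposal is correct and follows essentially the same route as the paper's proof: both apply Lemma~\ref{automorphicCosetLem} with $O=\Aut(G)$ and $O=\Hol(G)$ respectively and observe that the conjugation action of $\alpha_k$ on $G$ is the same in both overgroups, so the associated automorphic word map $w_G^{(\omega_1,\ldots,\omega_l)}$ coincides and constancy transfers. Your write-up is more detailed about the bookkeeping (normality of $G$ in each overgroup, identification of the conjugation maps), but the underlying argument is identical.
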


\begin{proof}
This is clear by Lemma \ref{automorphicCosetLem}, applied with $O:=\Aut(G)$ and $O:=\Hol(G)$ respectively, and the fact that automorphisms of $G$, when viewed as elements of $\Aut(G)$ and $\Hol(G)$ respectively, act in both cases in the same way by conjugation on the canonical copy of $G$ in the respective overgroup.
\end{proof}

The following properties of subgroups will be useful for our reduction arguments:

\begin{definition}\label{sufficientlyDef}
Let $G$ be a finite centerless group and $H\leq G$.

\begin{enumerate}
\item We say that $H$ is \emph{sufficiently invariant in $G$} if and only if every coset of $G$ in $\Aut(G)$ contains an automorphism leaving $H$ invariant.
\item We say that $H$ is \emph{conjugacy-uniquely maximal in $G$} or a \emph{conjugacy-unique maximal subgroup of $G$} if and only if $H$ is maximal in $G$ and $G$ has precisely one conjugacy class of maximal subgroups isomorphic to $H$.
\end{enumerate}
\end{definition}

\begin{lemma}\label{sufficientlyLem}
Let $G$ be a finite centerless group, $H\leq G$, $w=w(X_1,\ldots,X_d)$ a reduced word in $d$ distinct variables.

\begin{enumerate}
\item If $H$ is conjugacy-uniquely maximal in $G$, then $H$ is sufficiently invariant in $G$.
\item If $\alpha_1,\ldots,\alpha_d$ are automorphisms of $G$ under which $H$ is invariant, and if the $(G,\Hol(G))$-coset word map of $w$ with respect to $\alpha_1,\ldots,\alpha_d$ is constant, then the $(H,\Hol(H))$-coset word map of $w$ with respect to the restrictions of $\alpha_1,\ldots,\alpha_d$ to $H$ is constant.
\end{enumerate}
\end{lemma}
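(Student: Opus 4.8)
\textbf{Proof proposal for Lemma \ref{sufficientlyLem}.}

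For part (1), the plan is to unpack what ``$H$ conjugacy-uniquely maximal'' buys us. Let $C$ be the conjugacy class of $H$ in $G$; since $H$ is maximal and $G$ has exactly one class of maximal subgroups isomorphic to $H$, any automorphism $\alpha$ of $G$ must send $H$ to a maximal subgroup of $G$ isomorphic to $H$, hence to some conjugate $gHg^{-1}$ with $g\in G$. Then $\tau_{g}^{-1}\circ\alpha = \tau_{g^{-1}}\circ\alpha$ leaves $H$ invariant, and since $\tau_{g^{-1}}\in\Inn(G)$ lies in the copy of $G$ inside $\Aut(G)$ (here we use that $G$ is centerless, so $G\cong\Inn(G)$), this modified automorphism lies in the same coset $G\alpha$ of $G$ in $\Aut(G)$. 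Thus every coset of $G$ in $\Aut(G)$ contains an automorphism stabilizing $H$, which is exactly the definition of sufficient invariance. The only thing to be a little careful about is the identification of $G$ with $\Inn(G)$ as a subgroup of $\Aut(G)$; once that is fixed, the argument is a one-line orbit computation.

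For part (2), the strategy is to reduce everything to Lemma \ref{automorphicCosetLem} and its ``in particular'' clause, which converts constancy of a $(G,\Hol(G))$-coset word map into constancy of an associated automorphic word map on $G$, and vice versa. Write $w = x_1^{\epsilon_1}\cdots x_l^{\epsilon_l}$ with $x_j = X_{\iota(j)}$ as in Definition \ref{automorphicWordDef}. Applying Lemma \ref{automorphicCosetLem} with $O := \Hol(G)$ and $o_k := \alpha_k$ (viewed inside $\Hol(G)$), constancy of $w_{G,\Hol(G)}^{[\alpha_1,\ldots,\alpha_d]}$ is equivalent to constancy of the automorphic word map $w_G^{(\omega_1,\ldots,\omega_l)}$, where each $\omega_j$ is the conjugation on $G$ by a fixed product of the $\alpha_k^{\pm1}$ (this uses that $w_O(o_1,\ldots,o_d)$ is a constant not depending on the arguments $g_k$). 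Crucially, since each $\alpha_k$ leaves $H$ invariant, so does each $\omega_j$, and the restriction $\omega_j|_H$ is precisely the conjugation on $H$ by the same product of the $\alpha_k|_H^{\pm1}$. Hence, if $w_G^{(\omega_1,\ldots,\omega_l)}$ is constant on $G^d$, then restricting each argument $g_k$ to lie in $H$ and using that $\omega_j(H) = H$, the map $w_H^{(\omega_1|_H,\ldots,\omega_l|_H)}: H^d \to H$ is constant as well (it is literally the restriction of the constant map, with values staying in $H$). Applying the ``in particular'' direction of Lemma \ref{automorphicCosetLem} once more, now with $G$ replaced by $H$, $O := \Hol(H)$, and $o_k := \alpha_k|_H$, we conclude that $w_{H,\Hol(H)}^{[\alpha_1|_H,\ldots,\alpha_d|_H]}$ is constant, which is the claim.

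The main obstacle, such as it is, lies in part (2): one must check that the ``bookkeeping'' automorphisms $\omega_j$ produced by Lemma \ref{automorphicCosetLem} genuinely restrict to the corresponding $\omega_j$ for $H$, i.e., that the passage from $\Hol(G)$ to $\Hol(H)$ is compatible with restriction. This is straightforward because in both cases $\omega_j$ is defined as conjugation by the \emph{same} word in the $\alpha_k^{\pm1}$, and conjugation by $\alpha_k$ on $G$ restricts to conjugation by $\alpha_k|_H$ on $H$ exactly when $\alpha_k(H) = H$ — which is the hypothesis. No deeper input is needed; the lemma is essentially a transport-of-structure argument wrapping Lemma \ref{automorphicCosetLem}.
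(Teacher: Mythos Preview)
Your proposal is correct and follows essentially the same approach as the paper. For part (1) your argument is identical to the paper's; for part (2) the paper merely says ``with an argument as in the proof of Lemma \ref{dualityLem}'' (which in turn invokes Lemma \ref{automorphicCosetLem}), and your write-up spells out exactly that reduction to automorphic word maps and back, including the compatibility check for the $\omega_j$ under restriction.
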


\begin{proof}
For (1): Let $\alpha$ be any automorphism of $G$. By the assumptions, it follows that there exists $g\in G$ such that $\alpha[H]=\tau_g[H]$, whence $H$ is invariant under the automorphism $\tau_{g^{-1}}\alpha=g^{-1}\alpha$, which is from the same coset of $G$ in $\Aut(G)$ as $\alpha$.

For (2): With an argument as in the proof of Lemma \ref{dualityLem}, one can show the stronger assertion that the restriction of $w_{G,\Hol(G)}^{[\alpha_1,\ldots,\alpha_d]}$ to $H^d$ is constant if and only if $w_{H,\Hol(G)}^{[(\alpha_1)_{\mid H},\ldots,(\alpha_d)_{\mid H}]}$ is constant.
\end{proof}

The following simple observation will also be used several times:

\begin{lemma}\label{characteristicLem}
Let $G$ be a group, $N\cha G$, $w=w(X_1,\ldots,X_d)$ a reduced word in $d$ distinct variables. For an automorphism $\alpha$ of $G$, denote by $\tilde{\alpha}$ the automorphism of $G/N$ induced by $\alpha$. Assume that $\alpha_1,\ldots,\alpha_d$ are automorphisms of $G$ such that the $(G,\Hol(G))$-coset word map $w_{G,\Hol(G)}^{[\alpha_1,\ldots,\alpha_d]}$ is constant. Then:

\begin{enumerate}
\item The $(N,\Hol(N))$-coset word map $w_{N,\Hol(N)}^{[(\alpha_1)_{\mid N},\ldots,(\alpha_d)_{\mid N}]}$ is constant.
\item The $(G/N,\Hol(G/N))$-coset word map $w_{G/N,\Hol(G/N)}^{[\tilde{\alpha_1},\ldots,\tilde{\alpha_d}]}$ is constant.
\end{enumerate}
\end{lemma}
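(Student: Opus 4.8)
The plan is to reduce both parts to Lemma \ref{automorphicCosetLem} together with the hypothesis, exactly as the proof of Lemma \ref{dualityLem} exploited that lemma. First I would spell out what the hypothesis gives us: by Lemma \ref{automorphicCosetLem} applied with $G$ in place of ``$G$'' and $O:=\Hol(G)$, constancy of $w_{G,\Hol(G)}^{[\alpha_1,\ldots,\alpha_d]}$ is equivalent to constancy of the automorphic word map $w_G^{(\omega_1,\ldots,\omega_l)}:G^d\rightarrow G$, where each $\omega_j$ is the conjugation on $G$ by an explicit product of the $\alpha_i^{\pm 1}$ (viewed inside $\Hol(G)$); since the $\alpha_i$ are honest automorphisms of $G$, each $\omega_j$ is simply a composite of the $\alpha_i$ and their inverses, i.e.\ $\omega_j=\alpha_{\iota(1)}^{\epsilon_1}\cdots$ truncated as in the statement of that lemma. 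So the real content of the hypothesis is: the map $(g_1,\ldots,g_d)\mapsto\omega_1(g_{\iota(1)})^{\epsilon_1}\cdots\omega_l(g_{\iota(l)})^{\epsilon_l}$ is constant on $G^d$.

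For part (1), the key point is that $N\cha G$ implies $N$ is $\omega_j$-invariant for every $j$, because each $\omega_j$ is a composite of automorphisms of $G$ and characteristic subgroups are preserved by all automorphisms. Hence the restriction of $w_G^{(\omega_1,\ldots,\omega_l)}$ to $N^d$ takes values in $N$ and is just the automorphic word map $w_N^{((\omega_1)_{\mid N},\ldots,(\omega_l)_{\mid N})}$, which is constant since the full map is. Now run Lemma \ref{automorphicCosetLem} backwards, this time with $N$ in place of ``$G$'' and $\Hol(N)$ in place of ``$O$'': one has to check that the automorphisms $(\omega_j)_{\mid N}$ are exactly the ones produced by the truncated-product recipe of that lemma applied to the data $(\alpha_i)_{\mid N}\in\Aut(N)$ sitting inside $\Hol(N)$. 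This is immediate because restriction to $N$ is a homomorphism $\Aut(G)_N\to\Aut(N)$ (where $\Aut(G)_N$ denotes the automorphisms of $G$ stabilising $N$, which contains all of $\Aut(G)$ here) that sends $\alpha_i\mapsto(\alpha_i)_{\mid N}$ and hence sends each truncated product $\omega_j$ to $(\omega_j)_{\mid N}$. Therefore $w_{N,\Hol(N)}^{[(\alpha_1)_{\mid N},\ldots,(\alpha_d)_{\mid N}]}$ is constant, as claimed.

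For part (2), the argument is parallel with $N\cha G$ used instead to define the quotient automorphisms $\tilde\alpha_i$ of $G/N$: the canonical projection $G\twoheadrightarrow G/N$ intertwines $\omega_j$ with $\tilde\omega_j$ (the automorphism of $G/N$ induced by $\omega_j$), and since passing to induced automorphisms is again a homomorphism $\Aut(G)\to\Aut(G/N)$ sending $\alpha_i\mapsto\tilde\alpha_i$, it sends each truncated product $\omega_j$ to the corresponding truncated product $\tilde\omega_j$ built from the $\tilde\alpha_i$. Constancy of $w_G^{(\omega_1,\ldots,\omega_l)}$ on $G^d$ descends to constancy of $w_{G/N}^{(\tilde\omega_1,\ldots,\tilde\omega_l)}$ on $(G/N)^d$ (every element of $G/N$ lifts to $G$), and one final application of Lemma \ref{automorphicCosetLem} in the backward direction, with $G/N$ for ``$G$'' and $\Hol(G/N)$ for ``$O$'', repackages this as constancy of $w_{G/N,\Hol(G/N)}^{[\tilde\alpha_1,\ldots,\tilde\alpha_d]}$.

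I do not expect a genuine obstacle here; the lemma is essentially bookkeeping on top of Lemma \ref{automorphicCosetLem}. The one point that needs a little care — and the only place I would slow down — is the compatibility claim that the truncated-product recipe for the $\omega_j$ commutes with restriction-to-$N$ and with passage-to-$G/N$; this rests on those two operations being group homomorphisms out of (a subgroup of) $\Aut(G)$, which is standard but worth stating explicitly so that the application of Lemma \ref{automorphicCosetLem} in the reverse direction is justified. Everything else is routine.
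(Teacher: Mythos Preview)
Your proposal is correct and follows essentially the same route as the paper's proof: translate the hypothesis via Lemma \ref{automorphicCosetLem} into constancy of the automorphic word map $w_G^{(\omega_1,\ldots,\omega_l)}$, restrict to $N^d$ (respectively push down along $\pi:G\to G/N$) to obtain constancy of the corresponding automorphic word map on $N$ (respectively $G/N$), and then apply Lemma \ref{automorphicCosetLem} in reverse. The only difference is that you make the compatibility of the truncated-product recipe with restriction and passage-to-quotient explicit, which the paper leaves implicit.
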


\begin{proof}
Let $l$ be the length of $w$, and let $x_1,\ldots,x_l$, $\epsilon_1,\ldots,\epsilon_l$ and $\iota$ be as in Definition \ref{automorphicWordDef}. For $j=1,\ldots,l$, set $\omega_j:=\begin{cases}\alpha_{\iota(1)}^{\epsilon_1}\cdots\alpha_{\iota(j-1)}^{\epsilon_{j-1}}, & \text{if }\epsilon_j=1, \\ \alpha_{\iota(1)}^{\epsilon_1}\cdots\alpha_{\iota(j)}^{\epsilon_j}, & \text{if }\epsilon_j=-1\end{cases}$. Then by Lemma \ref{automorphicCosetLem} and the way conjugation on $G$ by elements from $\Aut(G)$ works in $\Hol(G)$, we get that the automorphic word map $w_G^{(\omega_1,\ldots,\omega_l)}:G^d\rightarrow G$ is constant. Therefore, $(w_G^{(\omega_1,\ldots,\omega_l)})_{\mid N^d}=w_N^{((\omega_1)_{\mid N},\ldots,(\omega_l)_{\mid N})}$ is constant, which by Lemma \ref{automorphicCosetLem} implies statement (1). Moreover, for $g_1,\ldots,g_d\in G$, by applying the canonical projection $\pi:G\rightarrow G/N$ to the valid equation $w_G^{(\omega_1,\ldots,\omega_l)}(g_1,\ldots,g_d)=1_G$, we conclude that $w_{G/N}^{(\tilde{\omega_1},\ldots,\tilde{\omega_l})}(\pi(g_1),\ldots,\pi(g_d))=1_{G/N}$, so that $w_{G/N}^{(\tilde{\omega_1},\ldots,\tilde{\omega_l})}$ is constant, which yields statement (2) by another application of Lemma \ref{automorphicCosetLem}.
\end{proof}

For later reference, we also note the following simple result, which follows immediately from the definition of a coset word map:

\begin{lemma}\label{representativesLem}
Let $G\leq O$ be groups, $w=w(X_1,\ldots,X_d)$ a reduced word in $d$ distinct variables.

\begin{enumerate}
\item For all $o_1,\ldots,o_d\in O$ and $g_1,\ldots,g_d\in G$, there is a cardinality-preserving bijection between the fibers of $w_{G,O}^{[o_1,\ldots,o_d]}$ and the fibers of $w_{G,O}^{[g_1o_1,\ldots,g_dod_]}$.
\item In particular, if $R$ is a set of representatives for the cosets of $G$ in $O$, then if any $o_1,\ldots,o_d\in O$ exist such that the $(G,O)$-coset word map $w_{G,O}^{[o_1,\ldots,o_d]}$ is constant, there even exist $o_1,\ldots,o_d\in R$ with this property.\qed
\end{enumerate}
\end{lemma}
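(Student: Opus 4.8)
The plan is to prove statement (1) by exhibiting an explicit bijection of $G^d$ that intertwines the two coset word maps, and then to read off statement (2) as an immediate special case.

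For statement (1), I would consider the coordinate-wise right-translation map $\Phi\colon G^d\to G^d$, $(h_1,\ldots,h_d)\mapsto(h_1g_1,\ldots,h_dg_d)$. This is well defined precisely because the elements $g_i$ lie in $G$ (and not merely in $O$), and it is a bijection of $G^d$, with inverse right-translation by $(g_1^{-1},\ldots,g_d^{-1})$. Recalling from Definition \ref{cosetWordDef} that $w_{G,O}^{[o_1,\ldots,o_d]}(h_1,\ldots,h_d)=w(h_1o_1,\ldots,h_do_d)$, a one-line substitution yields
\[
w_{G,O}^{[o_1,\ldots,o_d]}\circ\Phi=w_{G,O}^{[g_1o_1,\ldots,g_do_d]}.
\]
Since $\Phi$ is a bijection, for each $o\in O$ it restricts to a bijection from the fiber of $w_{G,O}^{[g_1o_1,\ldots,g_do_d]}$ over $o$ onto the fiber of $w_{G,O}^{[o_1,\ldots,o_d]}$ over $o$; sending each fiber of the former map to its $\Phi$-image is then the asserted cardinality-preserving bijection of fiber collections.

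For statement (2), suppose $w_{G,O}^{[o_1,\ldots,o_d]}$ is constant, i.e.\ it has a single nonempty fiber, equal to all of $G^d$. For each $i$, let $r_i\in R$ be the representative of the right coset $Go_i$ and put $g_i:=r_io_i^{-1}\in G$, so that $g_io_i=r_i$. Applying statement (1) with this choice of the $g_i$ identifies $w_{G,O}^{[r_1,\ldots,r_d]}$ with $w_{G,O}^{[o_1,\ldots,o_d]}\circ\Phi$ for a bijection $\Phi$ of $G^d$; hence $w_{G,O}^{[r_1,\ldots,r_d]}$ is again constant, and $r_1,\ldots,r_d\in R$, as required.

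I do not expect any real obstacle here: the statement is essentially bookkeeping, exactly as the paper indicates. The only two points worth stating carefully are that $\Phi$ is well defined because the translating elements $g_i$ lie in $G$, and that the set $R$ should be taken to represent the \emph{right} cosets $Go$ (consistent with the left multiplications $g_io_i$ appearing in statement (1)); once these conventions are fixed, everything follows from a single substitution into the defining formula for a coset word map.
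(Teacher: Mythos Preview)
Your argument is correct and is exactly the direct unfolding of the definition that the paper has in mind; indeed, the paper presents this lemma with a \qed and the remark that it ``follows immediately from the definition of a coset word map'', so there is no separate proof to compare against. Your explicit translation map $\Phi$ and the identity $w_{G,O}^{[o_1,\ldots,o_d]}\circ\Phi=w_{G,O}^{[g_1o_1,\ldots,g_do_d]}$ make precise what the paper leaves implicit, and your observation that $R$ should be read as a set of representatives for the \emph{right} cosets $Go$ is the only subtlety worth recording (in all applications in the paper $G\unlhd O$, so the distinction is moot there).
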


The special case $m=1$ in Nikolov's result \cite[Proposition 7]{Nik16a} means that for a given reduced word $w$, none of the coset word maps of $w$ on large enough nonabelian finite simple groups $S$ is constantly $1$. It would be great for us if one could improve this to non-constancy of coset word maps of $w$ on sufficiently large $S$ in general, and it turns out that Nikolov's very proof actually gives this stronger result:

\begin{proposition}\label{nikolovProp}
Let $w$ be a reduced word in $d$ distinct variables and of length $l$, and denote by $m$ the maximum multiplicity with which some variable occurs in $w$. Then the following statements hold:

\begin{enumerate}
\item There exists $N=N(w)\in\IN^+$ such that for all nonabelian finite simple groups $S$ of order greater than $N(w)$, none of the varied coset word maps of $w$ on $S$ is constant.
\item $w$ is weakly multiplicity-bounding if and only if some coset word map of $w$ on one of the centerless groups from $\{\PSL_2(p^L)^n\mid p\leq m,L\leq m\cdot l,n\in\{1,2\}\}\cup\{\Suz(2^{2L-1})\mid L\leq 4ml\}$ is constant.
\item It is algorithmically decidable whether a given reduced word is very strongly multiplicity-bounding.
\end{enumerate}
\end{proposition}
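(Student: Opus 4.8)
The plan is to reduce all three statements to the effective form of Nikolov's argument. For statement (1), the key point is that Nikolov's proof of \cite[Proposition 7]{Nik16a} in the case $m=1$ does not actually require the target value to be $1$: the argument produces, for a given word $w$ and a sufficiently large nonabelian finite simple group $S$, a substitution witnessing non-constancy of the relevant coset word map by exploiting the abundance of regular semisimple elements and CSS-subgroups in $S$, bounds that are uniform in $S$ once $|S|$ exceeds an explicit $N(w)$. Since a varied coset word map of $w$ on $S$ is itself an $(S,\Aut(S))$-coset word map of one of the finitely many variations of $w$ (Remark \ref{variationRem}(2)), applying the same bound to each variation and taking the maximum of the finitely many resulting thresholds yields a single $N(w)$ that works for all of them simultaneously; here one should also invoke Lemma \ref{dualityLem} to pass freely between $\Aut(S)$- and $\Hol(S)$-coset word maps, and Lemma \ref{representativesLem} to restrict the coset representatives $g_1,\dots,g_d$ to a fixed transversal of $S$ in $\Aut(S)$, which is finite for fixed $S$.

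For statement (2), I would argue by contraposition in both directions. If some coset word map of $w$ on a group $S$ from the listed family is constant, then $w$ is not weakly multiplicity-bounding essentially by definition (modulo Lemma \ref{dualityLem} and the observation that a constant coset word map on $S^n$ descends, via Lemma \ref{equationSystemLem} together with the projection to $\Aut(S)^n/\Aut(S)^n\cong\Sym_n$, to a constant coset word map on $S$ — or one simply observes directly that $\PSL_2(p^L)^n$ with $n\in\{1,2\}$ is already on the list). Conversely, suppose $w$ is not weakly multiplicity-bounding, so some coset word map of $w$ on some nonabelian finite simple group $S$ is constant. By statement (1), $|S|\le N(w)$, but more importantly one must run the \emph{quantitative} part of Nikolov's argument: the explicit bound shows that constancy can only occur when $S$ is either a $\PSL_2$ over a field of characteristic at most $m$ and degree at most $m\cdot l$, or a Suzuki group $\Suz(2^{2L-1})$ with $L\le 4ml$ — these are exactly the small-rank, small-characteristic cases where the counting estimates on regular semisimple elements fail to be effective. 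Then, using Lemma \ref{cosetWiseLem} (or a direct coset-counting argument) together with the structure of the relevant almost-simple groups, constancy on $S=\PSL_2(p^L)$ forces constancy of a coset word map on $S$ or on $S^2$ inside the almost-simple overgroup (to account for field-automorphism cosets permuting the diagonal), which accounts for the $n\in\{1,2\}$ and the absence of an $n$-parameter for the Suzuki family (whose outer automorphism group is cyclic of odd order, generated by a field automorphism that fixes the unique relevant conjugacy class of subgroups setwise).

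Statement (3) then follows immediately: by (2), to decide whether $w$ is weakly multiplicity-bounding one only has to check, for each of the finitely many groups $G$ in the displayed list and each tuple of coset representatives from the finite transversal of $G$ in $\Aut(G)$ and each variation of $w$ (finitely many, by Remark \ref{variationRem}(2)), whether the resulting coset word map — a map between two finite sets — is constant; this is a finite computation. For the full \enquote{very strongly multiplicity-bounding} property one argues the same way: by statement (1) there are only finitely many $S$ with $|S|\le N(w)$ to check, and for each such $S$ there are only finitely many variations of $w$, finitely many tuples of automorphisms, and for each the constancy check is a finite verification. The main obstacle is statement (2): one must go back into Nikolov's proof and verify that its estimates are genuinely uniform in the target element (not just in the $m=1$ constancy case) and that the failure set of the estimate is contained in the precise list $\{\PSL_2(p^L)^n\}\cup\{\Suz(2^{2L-1})\}$ with the stated bounds on $p$, $L$, $n$; extracting the exponents $m\cdot l$ and $4ml$ and the restriction $n\in\{1,2\}$ from his CSS-subgroup counting, and justifying the passage from almost-simple overgroups back to $S$ or $S^2$ via the characteristic-series lemmas above, is where essentially all the work lies.
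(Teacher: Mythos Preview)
Your overall architecture (finiteness of variations for (1); reduce to an explicit finite list for (2); derive (3) from (2)) matches the paper, but the concrete mechanisms you invoke are not the ones that actually work, and several steps as stated would not go through.

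\textbf{On (1) and the reduction to $\PSL_2$ and Suzuki.} The paper does not argue via \enquote{abundance of regular semisimple elements and CSS-subgroups}. The route is: first use Lemma \ref{automorphicCosetLem} to pass from a constant coset word map to a constant \emph{automorphic} word map, and then apply \cite[Theorem 1.1.2]{Bor17b} to rule out large alternating groups and Lie type groups of large rank. For the remaining bounded-rank Lie type groups one follows Nikolov's descent through chains of conjugacy-unique maximal subgroups (Lemmata \ref{sufficientlyLem}, \ref{characteristicLem}, \ref{representativesLem}) until one lands in $\PSL_2(q)$, $\PSL_2(q)^2$, or $\Suz(2^{2L-1})$. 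At that stage Nikolov's explicit polynomial-matrix ansatz produces a matrix $U_1$ which equals the identity at the zero substitution but not identically; this shows the coset word map takes at least two values, not merely that it is not constantly $1$. None of this is a counting argument on regular semisimple elements.

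\textbf{On the appearance of $n\in\{1,2\}$.} Your explanation (\enquote{field-automorphism cosets permuting the diagonal}) is not the source of the square. The factor $n=2$ arises already in Nikolov's subgroup-descent: for certain classical groups the relevant sufficiently invariant subgroup has a characteristic subquotient isomorphic to $\PSL_2(q)^2$ rather than $\PSL_2(q)$. One then uses Lemma \ref{equationSystemLem} to pass from a constant coset word map on $\PSL_2(q)^2$ to a constant coset word map of a \emph{variation} of $w$ on $\PSL_2(q)$; this is why the bounds stated in terms of $m$ and $l$ still apply. Your proposal does not account for this passage.

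\textbf{On alternating and sporadic groups in (2).} You jump straight to Lie type, but the definition of \enquote{weakly multiplicity-bounding} quantifies over all nonabelian finite simple $S$. The paper disposes of $\Alt_n$ ($n\ge 7$) and the sporadics separately, by observing that each contains a sufficiently invariant subgroup with a Lie type simple group as characteristic subquotient (e.g.\ $\Alt_5\cong\PSL_2(4)$ inside $\Alt_n$), and then applying Lemmata \ref{sufficientlyLem}--\ref{representativesLem}. This step is missing from your outline.

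\textbf{On the explicit parameters.} The bounds $p\le m$, $L\le ml$, $L\le 4ml$ are not Nikolov's original bounds (which are in terms of $l$ alone); they come from the sharper estimate $|C_{\alpha,\beta}|\le m$ in place of Nikolov's $|C_{\alpha,\beta}|\le l$. You do not indicate how the maximum multiplicity $m$ enters, so as written your argument would at best yield bounds in $l$ only.

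Finally, for (3) the paper derives decidability directly from (2): a word is very strongly multiplicity-bounding iff every variation is weakly multiplicity-bounding, and by (2) each such check involves only the finitely many listed groups. Your fallback to \enquote{check all $S$ with $|S|\le N(w)$} is correct in principle but weaker, and depends on having an explicit $N(w)$, which you have not actually produced.
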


\begin{proof}
For statement (1), since each word only has finitely many variations, it suffices to show, for each reduced word $w$, the existence of $M=M(w)\in\IN^+$ such that for all nonabelian finite simple groups $S$ of order greater than $M(w)$, none of the (\enquote{unvaried}) coset word maps of $w$ on $S$ is constant.

By Lemma \ref{automorphicCosetLem}, if, for some $S$, some coset word map of $w$ on $S$ is constant, then also some automorphic word map of $w$ on $S$ is constant, which by \cite[Theorem 1.1.2]{Bor17b} rules out large alternating groups and simple Lie type groups of large rank as such $S$. Hence we only need to deal with Lie type groups of bounded rank. For this, we can follow the argument in \cite[proof of Proposition 7]{Nik16a}.

First, just as in \cite[paragraph from the end of p.~868 to the beginning of p.~869]{Nik16a}, we can reduce to the cases $S=\PSL_2(q)=\PSL_2(p^L)$ and $S=\Suz(2^{2L-1})$ using Lemmata \ref{sufficientlyLem}, \ref{characteristicLem} and \ref{representativesLem}. To deal with those two cases, Nikolov chose certain ansatzes for the arguments of a hypothetical constant coset word map on $S$, which allowed him to express the assumption that the word map is constantly $1_S$ by a matrix equation with polynomial coefficients. In all the three cases considered by Nikolov ($S=\PSL_2(p^L)$ with $p>l$, $S=\PSL_2(p^L)$ with $L>l^2$ and $S=\Suz(2^{2L-1})$ with $L>4l^2$), his argument showed that a certain matrix with polynomial coefficients called $U_1$ (note that we only need the case $m=1$ of Nikolov's argument) does not always assume the identity matrix as a value when the variables from its coefficients are specified in $\IF_p$, while it is clear from the chosen ansatz that $U_1$ does become the identity matrix when all its arguments are chosen to be $0$.

By the way the matrix $U_1$ is defined, it corresponds to the non-constant part $w_G^{(\omega_1,\ldots,\omega_l)}(g_1,\ldots,g_d)$ in the factorization of the coset word map value in Lemma \ref{automorphicCosetLem} (with the formal difference that Nikolov in his argument moves the coset representatives to the left, not to the right). Hence Nikolov's result that $U_1$ assumes at least two distinct values if the field parameter $q$ is large enough actually shows that under this assumption, no coset word map of $w$ on $S$ can be constant at all, not just that it cannot be constantly $1$. This concludes the proof of the first assertion.

To prove statement (2), we also make use of Lemmata \ref{sufficientlyLem}, \ref{characteristicLem} and \ref{representativesLem}. Since $\Alt_5\cong\PSL_2(4)\cong\PSL_2(5)$ and $\Alt_6\cong\PSL_2(9)$ are of Lie type, each alternating group $\Alt_n$ with $n\geq 7$ contains $\Alt_5$ as a sufficiently invariant subgroup and every sporadic finite simple group $S$ either has some sufficiently invariant subgroup containing a simple Lie type group as a characteristic subgroup or quotient or contains another sporadic simple group with this property as a sufficiently invariant subgroup (see, for example, \cite[tables in Section 4]{Wil17a}), we conclude by Lemmata \ref{sufficientlyLem} to \ref{representativesLem} that if any $S$ on which some coset word map of $w$ is constant exists, $S$ can be chosen of Lie type.

But the same reduction argument as the one from the beginning of the third paragraph of this proof shows that if any nonabelian finite simple group $S$ of Lie type on which some coset word map of $w$ is constant exists, then some coset word map of $w$ on one of the groups $\PSL_2(q)^n=\PSL_2(p^L)^n$, $n\in\{1,2\}$, or $\Suz(2^{2L-1})$ is constant. By Lemma \ref{equationSystemLem}, this implies in any case that some coset word map of some variation $w'$ of $w$ is constant on one of the simple groups of the form $\PSL_2(q)=\PSL_2(p^L)$ or $\Suz(2^{2L-1})$. By following Nikolov's argument as we did before, we can further reduce to those finitely many cases where all the relevant parameters ($p$ and $L$ for $S=\PSL_2(p^L)$, and $L$ for $S=\Suz(2^{2L-1})$) are too small to conclude that $U_1$ can evaluate to a non-identity matrix. Nikolov gives bounds of the form $p\leq l$ and $L\leq l^2$ for $S=\PSL_2(p^L)$ and $L\leq 4l^2$ for $S=\Suz(2^{2L-1})$ for this, which are based on the bound $|C_{\alpha,\beta}|\leq l$ from \cite[Proposition 30]{Nik16a}. However, it is easy to see that the stronger bound $|C_{\alpha,\beta}|\leq m$ holds, which lets us improve Nikolov's bounds to $p\leq m$ and $L\leq ml$ for $S=\PSL_2(p^L)$ and $L\leq 4lm$ for $S=\Suz(2^{2L-1})$ (note that all these bounds still work for the variation $w'$, as it has the same length as $w$ and at most the maximum variable multiplicity of $w$). This concludes the proof of statement (2), and statement (3) is clear from it (by definition, a reduced word is very strongly multiplicity-bounding if and only if all its variations are weakly multiplicity-bounding).
\end{proof}

We are now ready to formulate and prove our main reduction result, Proposition \ref{mainReductionProp}. Statement (1) of it is an immediate consequence of Proposition \ref{nikolovProp}(2), and statements (2)--(7) give possibilities for further reductions which can be used to speed a decision algorithm for the word property of being very strongly multiplicity-bounding up. Statement (8) is a combination of the previous $7$ statements and explicitly lists the steps of such a decision algorithm.

\begin{proposition}\label{mainReductionProp}
Let $w$ be a reduced word of length $l$ and such that the maximum multiplicity with which a variable occurs in $w$ is $m$. Then the following hold:

\begin{enumerate}
\item $w$ is very strongly multiplicity-bounding if and only if none of the varied coset word maps of $w$ on any of the following nonabelian finite simple groups is constant:

\begin{itemize}
\item $\PSL_2(p^L)$ for primes $p\leq m$ and positive integers $L\leq ml$ with $p^L\geq 5$.
\item $\Suz(2^{2L-1})$ for positive integers $L$ with $2\leq L\leq 4ml$.
\end{itemize}

\item If the word map $w_{\Sym_3}=w_{\PSL_2(2)}$ is non-constant, then none of the varied coset word maps of $w$ on any of the groups $\PSL_2(2^L)$, $L\geq 2$, is constant.
\item It is the case that none of the varied coset word maps of $w$ on any of the groups $\PSL_2(2^L)$, $2\leq L\leq ml$, is constant if and only if this just holds for those $L\in\{2,\ldots,ml\}$ which are primes.
\item It is the case that none of the varied coset word maps of $w$ on any of the groups $\PSL_2(3^L)$, $2\leq L\leq ml$, is constant if any only if this just holds for those $L\in\{2,\ldots,ml\}$ which are either powers of $2$ or odd primes.
\item Let $p$ be any odd prime. If none of the varied coset word maps of $w$ are constant on any finite group $G$ of the form $\PSL_2(p^L)$, $L\in\{1,\ldots,ml\}$ a (possibly trivial) power of $2$ (note the inclusion of $L=1$ even for $p=3$ here), then none of the varied coset word maps of $w$ on any of the simple groups among the $\PSL_2(p^L)$, $1\leq L\leq ml$, is constant. Moreover, for $p>3$, the reverse implication holds trivially as well.
\item If the word map $w_{\Suz(2)}$ is non-constant, then none of the varied coset word maps of $w$ on any of the groups $\Suz(2^{2L-1})$, $L\geq 2$, is constant.
\item It is the case that none of the varied coset word maps of $w$ on any of the groups $\Suz(2^{2L-1})$, $2\leq L\leq 4ml$, is constant if and only if this just holds for those $L\in\{2,\ldots,4ml\}$ where $2L-1$ is an odd prime.
\item $w$ is very strongly multiplicity-bounding if and only if the following hold:

\begin{itemize}
\item non-constancy of the word map $w_{\Sym_3}=w_{\PSL_2(2)}$ or non-constancy of the varied coset word maps of $w$ on the groups $\PSL_2(2^L)$, $2\leq L\leq ml$ and $L$ a prime,
\item non-constancy of the varied coset word maps of $w$ on all $G\in\{\PSL_2(3^L)\mid 1\leq L\leq ml,L\text{ a power of }2\}$ or non-constancy of the varied coset word maps of $w$ on all $S\in\{\PSL_2(3^L)\mid 2\leq L\leq ml,L\text{ a power of }2\text{ or an odd prime.}\}$.
\item non-constancy of the varied coset word maps of $w$ an all $G\in\{\PSL_2(p^L)\mid 5\leq p\leq m,p\text{ a prime},1\leq L\leq ml,L\text{ a power of }2\}$.
\item non-constancy of the word map $w_{\Suz(2)}$ or non-constancy of the varied coset word maps of $w$ on all $S\in\{\Suz(2^{2L-1})\mid 2\leq L\leq 4ml,L\text{ such that }2L-1\text{ is an odd prime}\}$.
\end{itemize}
\end{enumerate}
\end{proposition}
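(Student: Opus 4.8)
The plan is to prove Proposition~\ref{mainReductionProp} essentially as a synthesis of Proposition~\ref{nikolovProp} and a handful of elementary subgroup-embedding arguments via the tools of Section~\ref{sec4}. First I would establish statement~(1). Proposition~\ref{nikolovProp}(2) already tells us that $w$ fails to be weakly multiplicity-bounding iff some coset word map of $w$ is constant on one of the groups $\PSL_2(p^L)^n$ with $p\leq m$, $L\leq ml$, $n\in\{1,2\}$, or on $\Suz(2^{2L-1})$ with $L\leq 4ml$; and $w$ is very strongly multiplicity-bounding iff all its (finitely many) variations are weakly multiplicity-bounding. Applying Lemma~\ref{equationSystemLem} to the case of a power $S^n$, constancy of a coset word map of a variation $w'$ on $S^n$ forces constancy of a coset word map of a further variation of $w'$ (equivalently, of $w$) on $S$ itself; so one may drop the exponent $n$ and reduce to $S\in\{\PSL_2(p^L)\}\cup\{\Suz(2^{2L-1})\}$, which is exactly the list in~(1) after discarding the degenerate cases $p^L<5$ and $L=1$ for Suzuki. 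Note one must be a little careful to track that passing to variations does not change $l$ and can only decrease $m$, so the bounds $p\leq m$, $L\leq ml$, $L\leq 4ml$ are preserved throughout.

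Next I would prove statements~(2)--(7), which are the individual ``pruning'' lemmas, all instances of the same template: if $U\leq S$ is sufficiently invariant in $S$ (by Lemma~\ref{sufficientlyLem}(1), e.g.\ if it is conjugacy-uniquely maximal) and all varied coset word maps of $w$ on the relevant quotients/characteristic subgroups of $U$ are non-constant, then the same holds for $S$; one invokes Lemmata~\ref{sufficientlyLem}(2), \ref{characteristicLem} and \ref{representativesLem} together with the duality Lemma~\ref{dualityLem} to descend along the embedding and then along $N\cha U$. Concretely: for~(2), $\PSL_2(2)\cong\Sym_3$ is (isomorphic to a quotient of) a suitable subgroup of $\PSL_2(2^L)$ for every $L$ — indeed $\PSL_2(2^{L'})\leq\PSL_2(2^L)$ whenever $L'\mid L$, and one pushes down to $L'=1$; for~(3), subfield subgroups $\PSL_2(p^{L'})\leq\PSL_2(p^L)$ for $L'\mid L$, $L'$ a proper divisor, let one reduce any composite $L$ (that is not a prime power in the odd case) to a smaller value, leaving only primes (resp.\ prime powers), and combining with the structure of $\PSL_2(p^L)$ when $\gcd$-issues arise; for~(4) and~(5) one additionally uses that $\PSL_2(3^L)$ and $\PSL_2(p^L)$ for certain small exponents have characteristic or centerless subquotients covered by the power-of-$2$ exponents, and that for $p>3$ every $\PSL_2(p^L)$ is simple so the reverse implication is trivial; for~(6) and~(7), $\Suz(2)$ is solvable of order $20$ and embeds (up to the subquotient structure) into every $\Suz(2^{2L-1})$, while $\Suz(2^{2L'-1})\leq\Suz(2^{2L-1})$ when $(2L'-1)\mid(2L-1)$, reducing $2L-1$ to its prime divisors. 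In each case the verification that the claimed subgroup is sufficiently invariant is the content of Lemma~\ref{sufficientlyLem}(1) combined with the well-known fact that subfield subgroups of $\PSL_2$ (and $\Suz$) are conjugacy-uniquely maximal of their isomorphism type — this is where I would cite the standard subgroup structure of $\PSL_2(q)$ (Dickson) and of $\Suz(q)$ (Suzuki).

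Finally, statement~(8) is a purely logical recombination: it just says that the conjunction over the list in~(1) of ``all varied coset word maps non-constant'' is equivalent, after applying the pruning implications~(2)--(7), to the four bulleted conditions. Here one chases the implications: the first bullet uses~(2) and~(3); the second uses~(4) and~(3) specialized to $p=3$; the third uses~(5) for each fixed prime $5\leq p\leq m$; the fourth uses~(6) and~(7). One direction of each ``or'' is the reduced-list condition itself; the other direction (non-constancy of the single word map on $\Sym_3$, resp.\ on $\Suz(2)$) is shown to already imply non-constancy on the whole tower by~(2), resp.~(6). I expect the only genuinely delicate point to be the bookkeeping in~(4) and~(5) — tracking exactly which small exponents $L$ of $\PSL_2(p^L)$ for $p\in\{3\}$ versus $p>3$ must be retained, since the relevant subfield/subquotient containments depend on divisibility of exponents and on whether intermediate $\PSL_2$'s are simple or need to be split off via a characteristic or central quotient; everything else is a routine application of the machinery already set up in Sections~\ref{sec2}--\ref{sec4}.
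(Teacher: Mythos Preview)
Your proposal is correct and follows essentially the same approach as the paper: statement~(1) via Proposition~\ref{nikolovProp}(2) together with Lemma~\ref{equationSystemLem} (variations of variations being variations), statements~(2)--(7) via descent along sufficiently invariant subfield subgroups using Lemmata~\ref{sufficientlyLem} and~\ref{representativesLem} and the Dickson/Suzuki subgroup classifications, and statement~(8) as the logical recombination. The one small point the paper handles slightly differently is that for (2) and (6) it observes directly that field automorphisms fix the canonical copy of $\PSL_2(2)$ (resp.\ $\Suz(2)$) \emph{element-wise}, so after choosing field automorphisms as coset representatives the restricted coset word map literally \emph{is} the ordinary word map of the variation $w'$---this is what lets you conclude constancy of $w_{\Sym_3}$ (resp.\ $w_{\Suz(2)}$) itself rather than merely of some coset word map there, a step your template leaves implicit.
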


\begin{proof}
Statement (1) follows immediately from Proposition \ref{nikolovProp}(2), Lemma \ref{equationSystemLem} and the fact that a variation of a variation of $w$ is equivalent (in the sense of Definition \ref{variationDef}(4)) to some variation of $w$.

We prove the contraposition of statement (2). Note that the field automorphisms of $\PSL_2(2^L)$ form a set of representatives for the cosets of $\PSL_2(2^L)$ in $\Aut(\PSL_2(2^L))$. Hence by Lemma \ref{representativesLem} and the assumption, there exist $L\geq 2$, a variation $w'$ of $w$, say in $e$ distinct variables, and field automorphisms $\alpha_1,\ldots,\alpha_e$ of $S=\PSL_2(2^L)$ such that the coset word map $(w')_{S,\Aut(S)}^{[\alpha_1,\ldots,\alpha_e]}$ is constant. Each $\alpha_k$ fixes the canonical copy of $\PSL_2(2)$ in $S$ element-wise, whence an application of Lemma \ref{sufficientlyLem}(2) yields that the word map of $w'$ on $\PSL_2(2)$ is constant. Since $w$ can be obtained from $w'$ by suitable variable substitutions, the word map $w_{\PSL_2(2)}$ is also constant, as required.

For statement (3), note that by Dickson's classification of the conjugacy classes of maximal subgroups of $\PSL_2(q)$ (see, for example, \cite{Kin05a}), for any $L\geq 2$ and any prime divisor $r$ of $L$, $\PSL_2(2^L)$ contains a conjugacy-unique maximal subgroup of isomorphism type $\PSL_2(2^r)$. The statement is thus clear by Lemmata \ref{sufficientlyLem}(2) and \ref{representativesLem}(2).

The proofs of statements (4) and (5) are analogous to the one of statement (3), this time using that for odd primes $p$, $L\geq 2$ and any \emph{odd} prime divisor $r$ of $L$, $\PSL_2(2^L)$ contains a conjugacy-unique maximal subgroup of isomorphism type $\PSL_2(2^{L/r})$ (see \cite[Theorem 2.1(o)]{Kin05a}).

The proof of statement (6) is analogous to the proof of statement (2); just replace $\PSL_2(2^L)$ by $\Suz(2^{2L-1})$ and $\PSL_2(2)$ by $\Suz(2)$.

To prove statement (7), use that for every odd divisor $O$ of $2L-1$, $\Suz(2^{2L-1})$ contains a canonical copy of $\Suz(2^O)$ invariant under all field automorphisms of $\Suz(2^{2L-1})$ and apply Lemmata \ref{sufficientlyLem}(2) and \ref{representativesLem}(2).

Statement (8), finally, follows from statements (1)--(7).
\end{proof}

\section{Power words}\label{sec5}

In this section, we study the special case of power words, i.e., words of the form $x^e$ for a variable $x$ and some integer $e$. For them, the situation is nicer than in general, since by Theorem \ref{powerWordTheo} below, we do not need to consider varied coset word maps of a power word when investigating whether that word is multiplicity-bounding.

\begin{theorem}\label{powerWordTheo}
Let $e\in\IZ$. The following are equivalent:

\begin{enumerate}
\item The power word $x^e$ is multiplicity-bounding.
\item The power word $x^e$ is weakly multiplicity-bounding.
\item For all $S\in\{\PSL_2(p^L)\mid p\leq e, L\leq e^2, p^L\geq 5\}\cup\{\Suz(2^{2L-1})\mid 2\leq L\leq 4e^2\}$: The $e$-th power map is non-constant on every coset of $S$ in $\Aut(S)$.
\end{enumerate}

In particular, $x^e$ is multiplicity-bounding if and only if $x^{-e}$ is multiplicity-bounding, and there is an algorithm which on input $e$ decides whether $x^e$ is multiplicity-bounding.
\end{theorem}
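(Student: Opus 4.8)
The plan is to prove the cycle of implications $(1)\Rightarrow(2)\Rightarrow(3)\Rightarrow(1)$ and then to read off the two ``in particular'' claims from the resulting equivalence $(1)\Leftrightarrow(3)$. The implication $(1)\Rightarrow(2)$ is immediate from Proposition \ref{stronglyProp}(3). For $(2)\Rightarrow(3)$ I would argue as follows: if $x^e$ is weakly multiplicity-bounding, then by definition no coset word map of $x^e$ on \emph{any} nonabelian finite simple group is constant; and for the groups $S$ in the displayed list --- which are genuine nonabelian finite simple groups, since $p^L\geq 5$ and $2L-1\geq 3$ --- the (unvaried) coset word map of $x^e$ on $S$ is exactly the map carrying a coset element $t$ of $S$ in $\Aut(S)$ to $t^e$. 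Hence $(3)$ is just this statement restricted to the listed $S$; the list and its dependence on $|e|$ will matter only for the converse and for decidability.

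The content is $(3)\Rightarrow(1)$. The plan here is to deduce $(1)$ from the stronger assertion that $x^e$ is \emph{strongly} multiplicity-bounding, which gives $(1)$ by Proposition \ref{stronglyProp}(2). By Definition \ref{stronglyDef}(1) together with Lemma \ref{equationSystemLem}, strong multiplicity-boundedness of $x^e$ amounts to: for every nonabelian finite simple $S$, every $n\in\IN$, every $\sigma\in\Sym_n$ (with $\psi=\sigma^e$), every admissible tuple of automorphisms of $S$, and every coordinate $i$, the corresponding coordinate-equation map is non-constant. For $w=x^e$ these maps are coset word maps of ``cyclic'' variations $X_{1,j_1}^{\pm1}\cdots X_{1,j_{|e|}}^{\pm1}$ of $x^e$, and by Lemma \ref{automorphicCosetLem} non-constancy of such a map is equivalent to non-constancy of an associated automorphic word map. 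I would then argue by cases on the length $k$ of the $\sigma$-orbit of $i$. If $k\geq|e|$, or more generally if the variation has a variable of multiplicity $1$, the map is non-constant by Proposition \ref{examplesProp}(1). Otherwise every variable of the variation has multiplicity $\geq 2$; restricting all but one of the variables belonging to the $\sigma$-orbit of $i$ to the identity and collapsing the intervening blocks --- which, by the cyclic pattern, are all equal to one fixed automorphism --- turns the map, up to a harmless constant factor, into $y\mapsto (y\gamma)^{e'}$ for some $\gamma\in\Aut(S)$ and some integer $e'$ with $2\leq e'\leq|e|$ (namely $e'$ is the number of occurrences of the surviving variable).

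If $e'\mid e$, then $(y\gamma)^{e}=\bigl((y\gamma)^{e'}\bigr)^{e/e'}$ is constant, so the $e$-th power map is constant on the coset $S\gamma$; but an $e$-th power map constant on a coset of a nonabelian finite simple group is a constant (unvaried) coset word map of $x^e$ there, and the reduction arguments behind Proposition \ref{nikolovProp}(2) and Proposition \ref{mainReductionProp} then force the $e$-th power map to be constant on a coset of one of the \emph{listed} groups as well, contradicting $(3)$. If $e'\nmid e$, then $e'<|e|$; constancy of $(y\gamma)^{e'}$ forces $\gamma^{e'}=\id$ and the coset identity $y\gamma(y)\cdots\gamma^{e'-1}(y)=1$ for all $y\in S$. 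For $e'=2$ this says the inversion map is an automorphism of $S$, impossible for nonabelian $S$; for $e'\geq 3$ such a coset identity cannot occur on any nonabelian finite simple group --- for $S$ beyond an explicit bound depending on $e'<|e|$ this is the $m=1$ case of Nikolov's non-constancy statement (Proposition \ref{nikolovProp}(1), \cite[Proposition 7]{Nik16a}) applied to $x^{e'}$, and the remaining finitely many small $S$ can be handled either by direct inspection or by observing that such an identity would make every element of the coset $S\gamma$ in $S\rtimes\langle\gamma\rangle$ generate a complement to $S$, producing $|S|$ complements, which is incompatible with $\fix(\gamma)\neq 1$ (nonabelian finite simple groups admit no fixed-point-free automorphism).

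The hard part will be this last sub-case $e'\nmid e$: one has to make the block-collapse fully rigorous (the intervening blocks really are constant and equal, as the cyclic pattern dictates), deal with the possibility that $\ord(\gamma)$ is a proper divisor of $e'$, and rule out the residual coset identities on the finitely many small simple groups that Nikolov's asymptotic bound does not reach --- most plausibly by combining the constraints coming from several different restrictions of the coordinate-equation map and invoking Propositions \ref{fiberProp1} and \ref{fiberProp2}. Once $(1)\Leftrightarrow(3)$ is established, the rest is formal. Condition $(3)$ is unchanged under $e\mapsto -e$, since on any coset $S\gamma$ the $(-e)$-th power map is the pointwise inverse of the $e$-th power map and hence is constant precisely when the latter is; so $x^e$ and $x^{-e}$ are simultaneously multiplicity-bounding or not. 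Finally $(3)$ is manifestly decidable --- one runs over the explicit finite list of simple groups $S$ (determined by $|e|$), over their finitely many cosets in $\Aut(S)$, and checks on each whether $t\mapsto t^e$ takes a single value --- so $(1)\Leftrightarrow(3)$ yields the desired decision algorithm.
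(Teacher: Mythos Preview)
Your cycle $(1)\Rightarrow(2)\Rightarrow(3)\Rightarrow(1)$ is fine in principle, but the way you run $(3)\Rightarrow(1)$ creates an artificial difficulty and also skips a genuine step. The paper instead proves $(2)\Rightarrow(1)$ and $(3)\Rightarrow(2)$ separately, and both points where you get stuck correspond exactly to those two pieces.

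\textbf{The ``hard case'' $e'\nmid e$ is avoidable.} For $w=x^e$ with $e>0$, the $i$-th equation of Lemma~\ref{equationSystemLem} involves the $d$ variables $y_0,\ldots,y_{d-1}$ indexed by the $\sigma$-orbit of $i$, where $d$ is the cycle length. If $d\mid e$, every variable has multiplicity $e/d$, so your $e'$ is always $e/d$ and hence divides $e$; your case $e'\nmid e$ simply does not occur here. If $d\nmid e$, write $e=qd+r$ with $0<r<d$. Now do \emph{two} specializations instead of one: keeping $y_0$ (multiplicity $q+1$) collapses the map, up to constants, to $y\mapsto(y\gamma)^{q+1}$, while keeping $y_{d-1}$ (multiplicity $q$) collapses it to $y\mapsto(y\gamma')^{q}$ with $\gamma'$ conjugate to $\gamma$. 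Constancy of the full left-hand side would force both $(t\gamma)^{q+1}$ and $(t\gamma)^{q}$ to be constant on the coset $S\gamma$, hence $t\gamma$ itself constant --- absurd. So the case $d\nmid e$ is automatic, and your Nikolov-plus-ad-hoc programme (including the complement count, which as written needs the unjustified extra assumption $\gamma^{e'}=\id$) is unnecessary. The paper's cleaner route is to observe, before touching the equation system, that $(2)$ propagates to divisors: if $x^e$ is weakly multiplicity-bounding then so is $x^t$ for every $t\mid e$ (since $(y\gamma)^t$ constant forces $(y\gamma)^e$ constant). One then only has to treat $d\mid e$ and invoke $t=e/d$.

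\textbf{The missing $\PSL_2(p^L)^2$ reduction.} In your $e'\mid e$ case you go: $(y\gamma)^{e'}$ constant $\Rightarrow$ $(y\gamma)^e$ constant on $S\gamma$ $\Rightarrow$ $x^e$ not weakly multiplicity-bounding $\Rightarrow$ constant on one of the listed groups, contradicting $(3)$. But the list produced by Proposition~\ref{nikolovProp}(2) contains $\PSL_2(p^L)^n$ for $n\in\{1,2\}$, whereas the list in $(3)$ has only $n=1$. Proposition~\ref{mainReductionProp} does not close this gap (it concerns varied coset word maps). The paper handles exactly this in its $(3)\Rightarrow(2)$ step: if the $e$-th power map is constant on a coset $S^2\alpha$ in $\Aut(S)\wr\Sym_2$, one separates the cases $\sigma=\id$ (project to a factor) and $\sigma=(1,2)$; for even $e$ the square reduces to an $(e/2)$-th power on a coset of $S$, while for odd $e$ one shows the resulting conjugacy constraints would force a fixed-point-free automorphism of $S$, contradicting \cite{Row95a}. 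You need this argument (or an equivalent one) somewhere, and it is not subsumed by the citations you give.

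In short: reorganize as the paper does. Prove $(2)\Rightarrow(1)$ using the divisor observation (which makes $e'\mid e$ automatic on the only cycles that matter), and prove $(3)\Rightarrow(2)$ by Proposition~\ref{nikolovProp}(2) together with the explicit $\PSL_2^2$ reduction. Your treatment of the ``in particular'' claims is correct.
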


Note that while we have, by Proposition \ref{nikolovProp}(3), an algorithm to decide whether a given reduced word is very strongly multiplicity-bounding, it is open whether there exists a general decision algorithm for the word property of being multiplicity-bounding (see also Question \ref{algorithmQues}).

\begin{proof}[Proof of Theorem \ref{powerWordTheo}]
For \enquote{(1) $\Rightarrow$ (2)}: Clear by Proposition \ref{stronglyProp}(3).

For \enquote{(2) $\Rightarrow$ (1)}: It is easy to see that a reduced word $w$ is (weakly) multiplicity-bounding if and only if its inverse $w^{-1}$ has this property, so we may assume w.l.o.g.~that $e>0$. Note that the assumption implies the stronger statement that for each divisor $t$ of $e$, the $t$-th power map is never constant on a coset of a nonabelian finite simple group in its automorphism group. We will use this to show that $w$ is strongly multiplicity-bounding. Consider the system of equations described in Lemma \ref{equationSystemLem} for the special case $w=X_1^e$. We only need to consider the case where $\psi_1^e=\id$, i.e., where each cycle length of $\psi_1$ divides $e$. Fix $i\in\{1,\ldots,n\}$. If $d$ denotes the cycle length of $i$ under $\psi_1$, then the $i$-th equation in the system, Equation (\ref{coordinateEq}), becomes

\[
(s_{1,i}\alpha_{1,i}\cdot s_{1,\psi_1^{-1}(i)}\alpha_{1,\psi_1^{-1}(i)}\cdots s_{1,\psi_1^{-(d-1)}(i)}\alpha_{1,\psi_1^{-(d-1)}(i)})^{e/d}=\beta_i,
\]

where the variables $s_{1,i},s_{1,\psi_1^{-1}(i)},\ldots,s_{1,\psi_1^{-(d-1)}(i)}$ appearing in the left-hand side are pairwise distinct. Setting the variables other than $s_{1,i}$ equal to $1_S$ and letting $s_{1,i}$ run through $S$, the left-hand side of the $i$-th equation runs through all $(e/d)$-th powers of the elements of an entire coset of $S$ in $\Aut(S)$. Hence if the $i$-th equation of the system was universally solvable, it would follow that the $(e/d)$-th power map is constant on the corresponding coset, contradicting our assumption. Therefore, none of the equations is universally solvable, so $w$ is strongly multiplicity-bounding, and we can conclude by an application of Proposition \ref{stronglyProp}(2).

For \enquote{(2) $\Rightarrow$ (3)}: Clear by definition.

For \enquote{(3) $\Rightarrow$ (2)}: Show the contraposition, and assume $e>0$ w.l.o.g.~again. If $x^e$ is not weakly multiplicity-bounding, then by Proposition \ref{nikolovProp}(2), the $e$-th power map is constant on some coset of $G$ in $\Aut(G)$ for $G$ one of the finite centerless groups $\PSL_2(p^L)^n,p\leq e,L\leq e^2,p^L\geq 5,n\in\{1,2\}$ or $\Suz(2^{2L-1}),2\leq L\leq 4e^2$. If $G$ is not isomorphic to some $\PSL_2(p^L)^2$, we are clearly done, so assume that the $e$-th power map is constant on some coset $S^2\alpha$ of $S^2$ in $\Aut(S^2)=\Aut(S)\wr\Sym_2$ for $S=\PSL_2(p^L)$ with $p$ and $L$ in the above described ranges. Write $\alpha=(\alpha_1,\alpha_2)\sigma$ with $\alpha_1,\alpha_2\in\Aut(S)$ and $\sigma\in\Sym_2=\{\id,(1,2)\}$. If $\sigma=\id$, by projecting onto the first coordinate, we get in particular that $(s_1\alpha_1)^e$ must assume the same value for all $s_1\in S$ and are done, so assume that $\sigma$ is the transposition $(1,2)$. For $s_1,s_2\in S$, set $f(s_1,s_2):=s_1\alpha_1(s_2)\alpha_1\alpha_2\in\Aut(S)$ (which runs through the entire coset $S\alpha_1\alpha_2$ as $s_1,s_2$ run through $S$) and $g(s_1,s_2):=((s_1\alpha_1,s_2\alpha_2)(1,2))^2=(s_1\alpha_1s_2\alpha_2,s_2\alpha_2s_1\alpha_1)=(f(s_1,s_2),\tau_{s_1\alpha_1}^{-1}(f(s_1,s_2)))$. If $e$ is even, then we get by assumption that $g(s_1,s_2)^{e/2}$ is constant, whence the $(e/2)$-th and thus also the $e$-th power map is constant on the coset $S\alpha_1\alpha_2$, as required. And if $e$ is odd, we get from the assumed constancy of $((s_1\alpha_1,s_2\alpha_2)(1,2))^e=g(s_1,s_2)^{(e-1)/2}\cdot(s_1\alpha_1,s_2\alpha_2)(1,2)$ that $g(s_1,s_2)^{(e-1)/2}$ must be injective in $(s_1,s_2)$, in particular that $g(s_1,s_2)$ must be injective. However, the entries of $g(s_1,s_2)$ both are elements of the coset $S\alpha_1\alpha_2$ (since $\Out(S)$ is abelian), and they are conjugate under some element from the coset $S\alpha_1^{-1}$. Hence $g(s_1,s_2)$ can only assume $|S|^2$ pairwise distinct values if all elements of $S\alpha_1\alpha_2$ are conjugate to one another under elements of $S\alpha_1^{-1}$. In particular, no two distinct elements from $S\alpha_1^{-1}$ conjugate $\alpha_1\alpha_2$ to the same element, so that $\alpha_1\alpha_2$ centralizes no element from $S\setminus\{1\}$. However, this contradicts the fact that $\alpha_1\alpha_2$ has a nontrivial fixed point in $S$ (see \cite[Theorem]{Row95a}).
\end{proof}

In the rest of this section, we will prove Theorem \ref{mainTheo}(1), starting with the \enquote{infinite part}:

\begin{corollary}\label{oddCorollary}
Let $e\in\IZ$ be odd. Then the power word $x^e$ is multiplicity-bounding.
\end{corollary}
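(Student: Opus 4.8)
The plan is to deduce Corollary \ref{oddCorollary} from Theorem \ref{powerWordTheo} by verifying condition (3) of that theorem for odd $e$, i.e., by showing that for every nonabelian finite simple group $S$ of the form $\PSL_2(p^L)$ or $\Suz(2^{2L-1})$ (in the relevant parameter ranges, though in fact the argument will work for all nonabelian finite simple $S$), the $e$-th power map is non-constant on each coset $S\alpha$ of $S$ in $\Aut(S)$. We may assume $e>0$ by the inversion remark already used in the proof of Theorem \ref{powerWordTheo}, and if $e=1$ the claim is trivial since $|S|>1$, so assume $e\geq 3$ odd.

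The key observation is that for odd $e$, non-constancy of the $e$-th power map on the coset $S\alpha$ can be reduced to a statement about a single element, namely $\alpha$ itself, versus its $S$-translates. First I would isolate $1_{\Aut(S)}$: taking $s=1_S$ shows the constant value, if it existed, would have to be $\alpha^e$. Now suppose for contradiction that $(s\alpha)^e=\alpha^e$ for all $s\in S$. Taking $s$ in the fixed-point subgroup $\fix(\alpha)\cap S$ — in particular $s$ commuting with $\alpha$ — gives $s^e\alpha^e=\alpha^e$, hence $s^e=1$ for all $s\in\fix(\alpha)$; since $\fix(\alpha)$ (intersected with $S$, or rather $C_S(\alpha)$) is nontrivial by \cite[Theorem]{Row95a} and contains an element of prime order $r$, we would need $r\mid e$. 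This is already a strong restriction but not yet a contradiction, so the heart of the argument is a more careful counting or conjugacy analysis.

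The cleanest route, which is essentially the one used for the odd case inside the proof of Theorem \ref{powerWordTheo} when handling $\PSL_2(p^L)^2$, is to rephrase constancy of the $e$-th power map as a conjugacy condition: $(s\alpha)^e=\alpha^e$ for all $s$ forces, upon writing $(s\alpha)^e=(s\alpha)\cdot(s\alpha)^{e-1}$ and using that $(s\alpha)^{e-1}$ lies in $S\alpha^{e-1}$ while $\alpha^{e-1}\in S\alpha^{e-1}$ too (as $\Out(S)$ is abelian), that the map $s\mapsto (s\alpha)^{e-1}$ cannot be injective unless ... — actually the more direct approach for a single coset is: the map $\phi:S\to S\alpha^e$, $s\mapsto (s\alpha)^e$ is constant iff its image is a singleton; but one computes that $(s\alpha)^e$ and $\alpha^e$ differ by a product of $e$ conjugates of $s$ by powers of $\alpha$, and for the product to be trivial for all $s\in S$ one gets that a certain "twisted" word map $s\mapsto s\cdot{}^\alpha s\cdot{}^{\alpha^2}s\cdots{}^{\alpha^{e-1}}s$ is constantly $1$ on $S$. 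This is the automorphic word map of $x^e$ (a power word has only the trivial variation), and by Proposition \ref{nikolovProp}(1) combined with Lemma \ref{automorphicCosetLem}, for $e$ fixed this can fail only for finitely many $S$, and among those finitely many one can check — and here is where oddness is genuinely used, via the fixed-point argument above and a parity obstruction in $\PSL_2(q)$ and $\Suz(q)$ where the relevant coset representatives (field, diagonal, graph-field automorphisms) have explicitly understood fixed-point subgroups and centralizer structure — that constancy never occurs.

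The main obstacle I expect is precisely the finite check at the bottom: after the general reduction one is left with coset word maps of $x^e$ on small $\PSL_2(p^L)$ (and a few $\Suz$) for $p\mid e$, and one must rule all of them out uniformly in the odd exponent $e$ rather than group-by-group. The trick making this uniform for odd $e$ is that the constant value forces $s^e = 1$ for every $s$ in a nontrivial centralizer $C_S(\alpha)$ — but more is true: running over the coset one can arrange $s\alpha$ to have order a prime divisor of $|S\alpha|$ not dividing $e$ unless every element of $S\alpha$ has order dividing $e$, and a coset $S\alpha$ in which every element has odd order is impossible because $S$ itself contains involutions which, multiplied by a suitable coset representative of even order or paired off, produce an element of even order in $S\alpha$ (using $\Out(S)$ abelian and the structure of $\Aut(\PSL_2(q))$, $\Aut(\Suz(q))$). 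So the crux is verifying: \emph{no coset $S\alpha$ of a nonabelian finite simple $S$ consists entirely of elements whose order divides a given odd integer}; this, together with the elementary observation that constancy of the $e$-th power map on $S\alpha$ forces $x^e\equiv\alpha^e$ and hence (after the twisted-word reformulation and cancellation) that $(s\alpha)^{e}$ is independent of $s$ — which upon taking $s\alpha$ of order not dividing $e$ yields the contradiction — finishes it. I would structure the write-up as: (i) reduce to $e\geq 3$ odd and to condition (3); (ii) the fixed-point / centralizer reformulation showing constancy forces every element of $S\alpha$ to have order dividing $e$; (iii) the parity/involution argument that such a coset cannot exist when $e$ is odd.
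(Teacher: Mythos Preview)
Your overall strategy---reduce via Theorem~\ref{powerWordTheo} to showing no coset $S\alpha$ can have constant $e$-th power map, then argue (ii) constancy forces every element of $S\alpha$ to have order dividing $e$, and (iii) every such coset contains an element of even order---is sound and genuinely different from the paper's proof. The paper never makes the reduction to ``all coset elements have order dividing $e$''; instead it fixes explicit coset representatives $r(\epsilon,\sigma)$ in $\Aut(\PSL_2(p^L))$ and field automorphisms in $\Aut(\Suz(2^{2L-1}))$, then exhibits two concrete elements of the coset whose $e$-th powers differ by a direct matrix computation (with an induction on $L$ for the diagonal-field cosets, using that $1+p^K+\cdots+p^{(e-1)K}$ is odd). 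Your route is more conceptual and, once completed, arguably cleaner.

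That said, there is a real gap in your step~(ii). You correctly observe that for $s\in C_S(\alpha)$ one gets $s^e=1$, but this only constrains $C_S(\alpha)$, not the whole coset. Your concluding line ``upon taking $s\alpha$ of order not dividing $e$ yields the contradiction'' is a non-sequitur: from $(s\alpha)^e=\alpha^e$ alone, an element $s\alpha$ of order not dividing $e$ is perfectly compatible unless you first know $\alpha^e=1$. The missing observation is that the constant value $c=(s\alpha)^e$ commutes with every $s\alpha$ (being a power of it), hence with every product $(s\alpha)(t\alpha)^{-1}=st^{-1}$, hence with all of $S$; since $C_{\Aut(S)}(S)=1$, this forces $c=1$, and only then does every element of $S\alpha$ have order dividing $e$. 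Once this is in place, step~(iii) does go through for the relevant $S$: for field-only cosets one uses that the field automorphism fixes the prime-field copy pointwise and that copy contains an involution; for the diagonal-field cosets of $\PSL_2(p^L)$ with $p$ odd, the image of the coset in $\Out(S)\cong C_2\times C_L$ already has even order, so the representative itself has even order. You should make both of these points explicit rather than leave them at the level of ``parity/involution argument''.
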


\begin{proof}
By Theorem \ref{powerWordTheo}, assume $e>0$ and note that it is sufficient to show that the $e$-th power map is non-constant on every coset of $S$ in $\Aut(S)$ for all nonabelian finite simple groups $S$ of one of the two forms $\PSL_2(p^L)$ or $\Suz(2^{2L-1})$. We proceed by contradiction.

If $S=\Suz(2^{2L-1})$, then the field automorphisms of $S$ form a set of representatives for the cosets of $S$ in $\Aut(S)$. Hence what we need to show boils down to proving that for no field automorphism $\sigma$ of $S$, it is the case that as $s$ runs through $S$, $(s\sigma)^e=s\sigma(s)\cdots\sigma^{e-1}(s)\sigma^e$ is always equal to $\sigma^e$. If that was the case, it would follow in particular that for all $s\in\Suz(2)\leq S$, $s^e=s\sigma(s)\cdots\sigma^{e-1}(s)=1$, whence $20=\exp(\Suz(2))\mid e$, a contradiction.

Now assume that $S=\PSL_2(p^L)$ for some prime $p$ and $L\in\IN^+$ with $p^L\geq 5$. For $p=2$, the field automorphisms of $S$ form a set of coset representatives for $S$ in $\Aut(S)$, and we can conclude as in the previous case, using that $\exp(\PSL_2(2))=\exp(\Sym_3)=6$, so assume that $p>2$. Fix a generator $\xi_{p^L}$ of $\IF_{p^L}^{\ast}$. Then, noting that $\Aut(S)=\Aut(\PSL_2(p^L))=\PGL_2(p^L)\rtimes\Gal(\IF_{p^L}/\IF_p)$, we find that the $2L$ elements of $\Aut(S)$ of the form

\[
r(\epsilon,\sigma):=\overline{\begin{pmatrix}\xi_{p^L}^{\epsilon} & 0 \\ 0 & 1\end{pmatrix}}\sigma,
\]

where $\epsilon\in\{0,1\}$ and $\sigma$ runs through the field automorphisms of $S$, form a set of representatives for the cosets of $S$ in $\Aut(S)$. That the $e$-th power map is non-constant on $Sr(0,\sigma)$ for all field automorphisms $\sigma$ follows as before, using that $\exp(\PSL_2(p))$ is even. It remains to show the assertion for $\epsilon=1$, for which we proceed by induction on $L$.

For $L=1$, the only field automorphism of $S$ is $\id_S$, and we have $\overline{\begin{pmatrix}\xi_p & 1 \\ 0 & 1\end{pmatrix}}\in Sr(1,\id_S)$, which has $e$-th power

\[
\overline{\begin{pmatrix}\xi_p^e & 1+\xi_p+\cdots+\xi_p^{e-1} \\ 0 & 1\end{pmatrix}}=\overline{\begin{pmatrix}\xi_p^e & \frac{\xi_p^e-1}{\xi_p-1} \\ 0 & 1\end{pmatrix}},
\]

which is distinct from $r(1,\id_S)^e=\begin{pmatrix}\xi_p^e & 0 \\ 0 & 1\end{pmatrix}$ because $\xi_p^e-1\not=0$.

Now assume that $L>1$. If $L$ is not a power of $2$, then we are done using the induction hypothesis and Lemma \ref{sufficientlyLem} with $H:=\PSL_2(p^{L/l})$, $l$ any odd prime divisor of $L$, which is conjugacy-uniquely maximal in $S$ by \cite[Theorem 2.1(o)]{Kin05a}. Hence assume that $L$ is a (nontrivial) power of $2$. Let $\sigma$ be any field automorphism of $S$, and let $K\in\{0,\ldots,L-1\}$ be such that $\sigma$ corresponds to the element $x\mapsto x^{p^K}$ of $\Gal(\IF_{p^L}/\IF_p)$. Then

\[
r(1,\sigma)^e=\overline{\begin{pmatrix}\xi_{p^L}^{1+p^K+\cdots+p^{(e-1)K}} & 0 \\ 0 & 1\end{pmatrix}}\sigma^e,
\]

and $Sr(1,\sigma)$ also contains the element $\overline{\begin{pmatrix}\xi_{p^L}^3 & 0 \\ 0 & 1\end{pmatrix}}$, which has $e$-th power

\[
\overline{\begin{pmatrix}\xi_{p^L}^{3(1+p^K+\cdots+p^{(e-1)K})} & 0 \\ 0 & 1\end{pmatrix}}\sigma^e,
\]

and this is distinct from $r(1,\sigma)^e$, since $1+p^K+\cdots+p^{(e-1)K}$ is odd and the multiplicative order of $\xi_{p^L}$, $p^L-1=(p^{L/2})^2-1$, is divisible by $8$.
\end{proof}

Now we turn to even exponents $e$, starting with those for which Theorem \ref{mainTheo}(1) asserts that their power word is \emph{not} multiplicity-bounding:

\begin{corollary}\label{evenCorollary1}
For $e\in\{8,12,16,18,24,30\}$, the power word $x^e$ is not multiplicity-bounding.
\end{corollary}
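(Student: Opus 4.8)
The plan is to reduce the claim to the \emph{weakly} multiplicity-bounding property: by Proposition~\ref{stronglyProp}(3) a word that is not weakly multiplicity-bounding is not multiplicity-bounding, so it suffices to produce, for each exponent $e$ in the list, a nonabelian finite simple group $S$ together with an automorphism $\alpha\in\Aut(S)$ such that the coset word map of $x^e$ on $S$ with respect to $\alpha$ — that is, the map $S\to\Aut(S)$, $s\mapsto(s\alpha)^e$, where $s$ is identified with the inner automorphism it induces — is constant. I would obtain constancy in the strongest possible way, namely by arranging that \emph{every} element of the coset $\Inn(S)\alpha$ of $\Inn(S)$ in $\Aut(S)$ has order dividing $e$; then $(s\alpha)^e=\id_S$ for all $s\in S$. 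With this reduction in place, the body of the proof is a short list of examples, in each of which $\langle\Inn(S),\alpha\rangle$ is a concrete small almost simple group and $\Inn(S)\alpha$ is one of its nontrivial cosets over $\Inn(S)$:
\begin{itemize}
\item $e=30$: take $S=\Alt_5$ and $\alpha=\id_S$; indeed $\exp(\Alt_5)=\lcm(1,2,3,5)=30$.
\item $e=12$: take $S=\Alt_5\cong\PSL_2(5)$ and $\alpha$ the diagonal automorphism, so that $\langle\Inn(S),\alpha\rangle=\PGL_2(5)\cong\Sym_5$ and $\Inn(S)\alpha$ is the set of odd permutations of $\{1,\ldots,5\}$, which have cycle types $(2)$, $(4)$ and $(2,3)$, hence orders $2$, $4$, $6$, all dividing $12$.
\item $e=24$: take $S=\PSL_2(7)$ and $\alpha$ the diagonal automorphism, so $\Inn(S)\alpha$ is $\PGL_2(7)\setminus\PSL_2(7)$; the orders occurring there lie among $\{2,4,6,8\}$ (the split torus of $\PGL_2(7)$ has order $7-1=6$, the non-split torus order $7+1=8$, and the unipotent elements are inner), and $\lcm(2,4,6,8)=24$.
\item $e=18$: take $S=\PSL_2(8)$ and $\alpha$ a field automorphism of order $3$, so $\langle\Inn(S),\alpha\rangle=\Aut(\PSL_2(8))=\PSL_2(8)\rtimes C_3$ and $\Inn(S)\alpha$ is one of its two nontrivial cosets over $\PSL_2(8)$; the orders occurring in those cosets lie among $\{3,6,9\}$, and $\lcm(3,6,9)=18$.
\item $e\in\{8,16\}$: take $S=\Alt_6\cong\PSL_2(9)$ and $\alpha$ an outer automorphism with $\langle\Inn(S),\alpha\rangle=\mathrm{M}_{10}$ (the non-split extension of $\Alt_6$ by $C_2$ inside $\Aut(\Alt_6)$); then $\Inn(S)\alpha=\mathrm{M}_{10}\setminus\Alt_6$, whose elements all have even order, hence order among $\{2,4,8\}$, which divides both $8$ and $16$.
\end{itemize}
In each case one then reads off that $(s\alpha)^e=\id_S$ for all $s\in S$, so $x^e$ is not weakly multiplicity-bounding, hence not multiplicity-bounding.

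The only substantive point is the verification of the element-order claims, and the hard part will be the two cases $\PGL_2(7)$ and $\PSL_2(8)\rtimes C_3$ rather than the essentially combinatorial cases $\Alt_5$ and $\Sym_5$. I would treat these by combining the description of the maximal tori and Sylow subgroups of $\PSL_2(q)$ and $\PGL_2(q)$ (cf.\ \cite{Kin05a}) with the principle that an element of an almost simple group $G\ge S$ whose order is coprime to $|G:S|$ must lie in $S$ — so the $p$-elements contribute no new orders to the outer coset — and with the fact that passing to the outer extension does not create larger cyclic subgroups: for instance a Sylow $3$-subgroup of $\PSL_2(8)\rtimes C_3$ contains the field automorphism of order $3$ as a complement to the cyclic torus $C_9$, so it has order $27$ and exponent $9$ (not $C_{27}$), whence $\PSL_2(8)\rtimes C_3$ has no element of order $27$, and a similar analysis of the torus normalizers rules out orders $21$ and $18$ in the outer coset, leaving exactly $\{3,6,9\}$; for $\PGL_2(7)$ the outer elements lie in the split torus $C_6$ (orders $2,6$), the non-split torus $C_8$ (order $8$), the outer involution class (order $2$) and the semidihedral Sylow $2$-subgroup (orders $4,8$), leaving exactly $\{2,4,6,8\}$. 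Alternatively, for $\mathrm{M}_{10}$, $\PGL_2(7)$ and $\PSL_2(8)\rtimes C_3$ all element orders can simply be read off the standard character tables, and the verification reduces to the remark that every element of the relevant outer coset has order divisible by $2$, $2$ and $3$ respectively and is therefore one of $\{2,4,8\}$, $\{2,4,6,8\}$, $\{3,6,9\}$. Finally, I would note that it suffices to treat these (positive) exponents, since by Theorem~\ref{powerWordTheo} the word $x^e$ is multiplicity-bounding if and only if $x^{-e}$ is.
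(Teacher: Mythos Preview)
Your proof is correct and follows essentially the same approach as the paper: for each $e$, exhibit a coset of a nonabelian finite simple group $S$ in $\Aut(S)$ all of whose elements have order dividing $e$, so that $x^e$ is not weakly multiplicity-bounding and hence (via Proposition~\ref{stronglyProp}(3), which is all that is needed here, though the paper cites Theorem~\ref{powerWordTheo}) not multiplicity-bounding. Your examples coincide with the paper's for $e\in\{8,12,16,18,30\}$ (the $M_{10}$ coset is exactly the paper's $(\PGL_2(9)\setminus\PSL_2(9))\sigma$); the only difference is that for $e=24$ the paper simply reuses the $\Sym_5\setminus\Alt_5$ coset from $e=12$ (since $12\mid 24$) rather than passing to $\PGL_2(7)$, and that the paper defers the element-order verifications for $\PSL_2(8)$ and $\PSL_2(9)$ to GAP whereas you sketch structural arguments.
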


\begin{proof}
By Theorem \ref{powerWordTheo}, we need to find, for each of the six $e$, a coset $C$ of a nonabelian finite simple group $S$ in $\Aut(S)$ such that the $e$-th power map is constant on that coset. For $e=12,24$, take $S:=\PSL_2(5)=\Alt_5$ and $C:=\Sym_5\setminus\Alt_5$. For $e=8$, let $\sigma$ be the nontrivial field automorphism of $S:=\PSL_2(9)$, and take $C:=(\PGL_2(9)\setminus\PSL_2(9))\sigma$ (that all elements in $C$ have order a divisor of $8$ can be checked with GAP), and for $e=18$, let $\sigma$ be one of the two nontrivial field automorphisms of $S:=\PSL_2(8)$, and take $C:=S\sigma$ (and check that this does the job with GAP again). Finally, for $e=30$, just take $S:=C:=\Alt_5$.
\end{proof}

As mentioned after Theorem \ref{mainTheo}, the author used a GAP implementation of a certain algorithm to verify that for all even integers $e$ with $|e|\leq22$ not mentioned in Corollary \ref{evenCorollary1}, $x^e$ is multiplicity-bounding. We will now describe this algorithm; the basic idea is to use the equivalence with Theorem \ref{mainTheo}(3), of course, but it turns out that one can reduce the number of cases to be checked by the algorithm even further, which is vital to reduce the actual computation time to a reasonable amount. In what follows, let $e$ be any even and w.l.o.g.~positive integer.

\begin{itemize}
\item We first note that by the element structure of $\PGL_2(q)$ and $\Suz(2^{2L-1})$ (for the latter, see Suzuki's original paper \cite{Suz60a}), it is clear that

\[
\exp(\PSL_2(2^L))=\exp(\PGL_2(2^L))=2\cdot(2^{2L}-1),
\]

that $\exp(\PSL_2(p^L))=\frac{1}{4}p(p^{2L}-1)$ and

\[\lcm_{x\in\PGL_2(p^L)\setminus\PSL_2(p^L)}{\ord(x)}=\frac{1}{2}(p^{2L}-1)
\]

for odd primes $p$, and that

\[
\exp(\Suz(2^{2L-1}))=(2^{2L-1}+2^L+1)\cdot(2^{2L-1}-2^L+1)\cdot(2^{2L-1}-1)\cdot 4.
\]

Hence we may assume that $e$ is not divisible by any of these numbers for $L$ such that the corresponding group $\PSL_2(2^L)$, $\PSL_2(p^L)$ or $\Suz(2^{2L-1})$ is simple, since otherwise, $x^e$ is clearly not multiplicity-bounding.

\item Now let $L\in\{2,\ldots,e^2\}$ be a prime; in this bullet point, we consider constancy of the $e$-th power map on cosets of $S:=\PSL_2(2^L)$ in $\Aut(\PSL_2(2^L))=\PSL_2(2^L)\rtimes\Gal(\IF_{2^L}/\IF_2)$ (recall that it is sufficient to consider prime $L$ by Proposition \ref{mainReductionProp}(2)). Fix a field automorphism $\sigma$ of $S$, say $\sigma$ corresponds to the automorphism $x\mapsto x^{2^K}$, $K\in\{0,\ldots,L-1\}$, of $\IF_{2^L}$. By our assumption from the previous bullet point, the $e$-th power map is certainly non-constant on $S\sigma$ for $K=0$, so assume $K\geq 1$. The question is whether the expression $M\cdot\sigma(M)\cdots\sigma^{e-1}(M)$ assumes the same value on all $M\in S$. Now it certainly assumes $1_S$ for $M:=1_S$, and for $\xi_{2^L}$ a generator of $\IF_{2^L}^{\ast}$ and $M:=\overline{\begin{pmatrix}\xi_{2^L} & 0 \\ 0 & 1\end{pmatrix}}$, the expression becomes

\[
\overline{\begin{pmatrix}\xi_{2^L}^{\frac{2^{eK}-1}{2^K-1}} & 0 \\ 0 & 1\end{pmatrix}},
\]

which is equal to the identity element of $S$ if and only if $(2^K-1)(2^L-1)\mid 2^{eK}-1$, which implies $2^L-1\mid 2^{eK}-1$ and thus $L\mid eK$. Since $L$ is a prime and $1\leq K<L$, we get from this that $L\mid e$. Summing up, we see that for the $\PSL_2(2^L)$, \emph{it suffices to consider only those $L\in\{2,\ldots,e^2\}$ that are prime divisors of $e$}.

\item Next, let $L\in\{2,\ldots,e^2\}$ be either a prime or a power of $2$; we now consider constancy of the $e$-th power map on cosets of $S:=\PSL_2(3^L)$ in $\Aut(\PSL_2(3^L))=\PGL_2(3^L)\rtimes\Gal(\IF_{3^L}/\IF_3)$. Fix a field automorphism $\sigma$, say $\sigma$ corresponds to $x\mapsto x^{3^K}$, $K\in\{1,\ldots,L-1\}$, let $\xi_{3^L}$ be a generator of $\IF_{3^L}^{\ast}$, and set $\theta_{3^L}:=\xi_{3^L}^2$, a generator of the group of squares in $\IF_{3^L}^{\ast}$. With $\sigma$ fixed, we investigate whether the $e$-th power map is constant on one of the two cosets $S\overline{\begin{pmatrix}\theta_{3^L} & 0 \\ 0 & 1\end{pmatrix}}\sigma$ or $S\overline{\begin{pmatrix}\xi_{3^L} & 0 \\ 0 & 1\end{pmatrix}}\sigma$ of $S$ in $\Aut(S)$, which is equivalent to the constancy of $M\cdot\sigma(M)\cdots\sigma^{e-1}(M)$ for $M\in S\overline{\begin{pmatrix}\zeta_{3^L} & 0 \\ 0 & 1\end{pmatrix}}$, $\zeta\in\{\theta,\xi\}$ fixed. This coset of $S$ contains the two elements $\overline{\begin{pmatrix}\zeta_{3^L}^{\pm1} & 0 \\ 0 & 1\end{pmatrix}}$, and plugging them into the above expression yields

\[
\overline{\begin{pmatrix}\zeta^{\pm\frac{3^{eK}-1}{3^K-1}} & 0 \\ 0 & 1\end{pmatrix}},
\]

which are distinct elements of $\PGL_2(3^L)$ for both choices of $\zeta$ unless $(3^K-1)(3^L-1)\mid 4\cdot(3^{eK}-1)$, hence unless $3^L-1\mid 4\cdot(3^{eK}-1)$. We claim that (as in the previous case), this last divisibility relation implies $L\mid eK$. Indeed, this is clear for $L=2$ since $e$ is even, and for $L\geq 3$, we note that by Zsigmondy's theorem, $3^{\gcd(L,eK)}-1=\gcd(3^L-1,3^{eK}-1)=3^L-1$, since by assumption, $\gcd(3^L-1,3^{eK}-1)$ is divisible by all prime divisors of $3^L-1$. For prime $L$, this again implies $L\mid e$, so \emph{among the prime $L\in\{2,\ldots,e^2\}$, we only need to consider those that are prime divisors of $e$}. Moreover, for $L$ that are powers of $2$, we do not get a further restriction on $L$ from this, but a restriction on $K$: \emph{For each fixed $L=2^f\leq e^2$, we only need to consider those $K$ which are divisible by $2^{\max(0,f-\nu_2(e))}$.} In particular, for given $L=2^f$, the number of $K$ to consider is bounded from above by $2^{\nu_2(e)}$.

\item Now let $L\in\{2,\ldots,e^2\}$ be a power of $2$. With an argument analogous to the one for the prime $3$, we see that \emph{for each fixed $L=2^f\leq e^2$, we only need to consider those $K$ which are divisible by $2^{\max(0,f-\nu_2(e))}$} when checking the constancy of the $e$-th power map on either of the two cosets  $S\overline{\begin{pmatrix}\xi_{p^L} & 0 \\ 0 & 1\end{pmatrix}}\sigma$ or $S\overline{\begin{pmatrix}\xi_{p^L}^2 & 0 \\ 0 & 1\end{pmatrix}}\sigma$, $\xi_{p^L}$ a generator of $\IF_{p^L}^{\ast}$ and $\sigma$ the field automorphism corresponding to $x\mapsto x^{p^K}$.

\item Finally, let $L\in\{2,\ldots,4e^2\}$ be a prime. We study constancy of the $e$-th power map on cosets of $S:=\Suz(2^{2L-1})$ in $\Aut(S)=S\rtimes\Gal(\IF_{2^{2L-1}}/\IF_2)$. Fix a field automorphism $\sigma$ of $S$, say corresponding to $x\mapsto x^{2^K}$ for $K\in\{1,\ldots,2L-2\}$, and study the constancy of the expression $M\cdot\sigma(M)\cdots\sigma^{e-1}(M)$ on $S$. Consider Suzuki's original $4$-dimensional representation of $S$ over $\IF_{2^{2L-1}}$ from \cite{Suz60a} to view $S$ as a subgroup of $\GL_4(2^{2L-1})$. Then for any fixed generator $\xi_{2^{2L-1}}$ of $\IF_{2^{2L-1}}^{\ast}$, $S$ contains the matrix

\[
M:=\begin{pmatrix}\xi_{2^{2L-1}} & 0 & 0 & 0 \\ 0 & \xi^{2^L-1} & 0 & 0 \\ 0 & 0 & \xi^{1-2^L} & 0 \\ 0 & 0 & 0 & \xi^{-1}\end{pmatrix},
\]

which when plugged into the expression yields a matrix equal to the identity matrix if and only if $(2^K-1)(2^{2L-1}-1)\mid 2^{eK}-1$. Hence like before, we conlude that \emph{we only need to consider those $L\in\{2,\ldots,4e^2\}$ such that $2L-1$ is a prime divisor of $e$}.
\end{itemize}

The algorithm implemented by the author only checks the cases not eliminated by the above reduction arguments. In each of the cases, the algorithm chooses elements $M$ from a fixed coset of $\PSL_2(p^L)$ in $\PGL_2(p^L)$ resp.~from $\Suz(2^{2L-1})$ at random and checks whether $M\sigma(M)\cdots\sigma^{e-1}(M)$ is distinct from the value of that expression in some \enquote{standard} element. Note that while our reduction arguments heavily reduce the list of primes $L$ to consider, the $L$ that are powers of $2$ still need to be checked up to $e^2$. So, for example, for $e=22$, we need to do matrix computations \emph{inter alia} over $\IF_{19^{256}}$. To deal with this, the author first stored various primitive polynomials over prime fields in a list; as far as available, the polynomials from \cite[Supplement, S47--S50]{HM92a} were used, and the remaining needed primitive polynomials (such as one of degree $256$ over $\IF_{19}$) were found using a simple random search algorithm also implemented by the author in GAP.

With these preparations, the author's algorithm can be used to quickly verify the following result case by case, concluding the proof of Theorem \ref{mainTheo}(1):

\begin{corollary}\label{evenCorollary2}
For each even integer $e$ with $|e|\leq 22$ and $|e|\notin\{8,12,16,18\}$, the power word $x^e$ is multiplicity-bounding.\qed
\end{corollary}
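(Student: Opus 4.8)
The plan is to verify condition (3) of Theorem~\ref{powerWordTheo} for each of the finitely many even exponents $e$ in the stated range; as usual we may assume $e>0$, since $x^e$ is multiplicity-bounding if and only if $x^{-e}$ is. Thus for each such $e$ and each nonabelian finite simple group $S$ of the form $\PSL_2(p^L)$ with $p\le e$, $L\le e^2$ and $p^L\ge 5$, or of the form $\Suz(2^{2L-1})$ with $2\le L\le 4e^2$, we must show that the $e$-th power map is non-constant on every coset of $S$ in $\Aut(S)$.

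First I would cut down the collection of groups and cosets to be inspected. Using the explicit exponent formulas for $\PSL_2(q)$, $\PGL_2(q)$ and $\Suz(2^{2L-1})$ recalled above, one immediately discards any $S$ whose relevant exponent divides $e$ (for such $S$ the $e$-th power map is constantly trivial on the appropriate coset, so $x^e$ is then not multiplicity-bounding --- but no $e$ in our range is lost this way, as those are exactly the exponents in $\{8,12,16,18\}$ and the larger bad exponents). Next, by the coset-representative arguments and the conjugacy-unique-maximal-subgroup reductions of Proposition~\ref{mainReductionProp}(2)--(7), specialized to power words, one reduces: for $\PSL_2(2^L)$ to primes $L$ dividing $e$; for $\PSL_2(3^L)$ to $L$ that are prime divisors of $e$ or powers of $2$, with the additional constraint that the field-automorphism exponent $K$ be divisible by $2^{\max(0,f-\nu_2(e))}$ when $L=2^f$; for $\PSL_2(p^L)$ with $5\le p\le e$ to $L$ a power of $2$ with the same restriction on $K$; and for $\Suz(2^{2L-1})$ to $L$ such that $2L-1$ is a prime divisor of $e$. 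This leaves, for each $e$, only a small explicit list of pairs (group, coset).

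For each surviving case I would exhibit two elements of the coset in question and compare their $e$-th powers: taking $\sigma$ the field automorphism $x\mapsto x^{p^K}$ and plugging in diagonal representatives such as $\overline{\operatorname{diag}(\xi,1)}$ against the identity, or $\overline{\operatorname{diag}(\xi^{\pm1},1)}$ against each other (with $\xi$ a generator of $\IF_{p^L}^\ast$), or the diagonal torus element of $\Suz(2^{2L-1})$ displayed above, the product $M\sigma(M)\cdots\sigma^{e-1}(M)$ collapses to $\overline{\operatorname{diag}(\xi^{(p^{eK}-1)/(p^K-1)},1)}$ up to the $\sigma^e$ tail, and non-constancy reduces to the elementary non-divisibility of $(p^{eK}-1)$ by $(p^K-1)(p^L-1)$ (for $\PSL_2$) or of $(2^{eK}-1)$ by $(2^K-1)(2^{2L-1}-1)$ (for $\Suz$). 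Following the author, one instead verifies this directly in GAP, drawing $M$ at random from the relevant coset and checking that $M\sigma(M)\cdots\sigma^{e-1}(M)$ differs from the value at a fixed standard element.

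The hard part will be purely computational bookkeeping rather than anything conceptual: the powers-of-$2$ values of $L$ still range up to $e^2$, so already for $e=22$ one must perform linear algebra over fields as large as $\IF_{19^{256}}$, which forces one first to locate primitive polynomials of high degree over small prime fields --- partly from published tables, partly by a random search --- before the matrix computations can even begin, and to organize the finitely many surviving cases so that the whole verification terminates in reasonable time. Once every such case is confirmed, condition (3) of Theorem~\ref{powerWordTheo} holds for all even $e$ with $|e|\le 22$ and $|e|\notin\{8,12,16,18\}$, and hence $x^e$ is multiplicity-bounding for each such $e$.
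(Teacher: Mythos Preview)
Your proposal is correct and follows the paper's own approach essentially verbatim: the paper proves Corollary~\ref{evenCorollary2} precisely by invoking the equivalence in Theorem~\ref{powerWordTheo}, carrying out the same divisibility-based reductions you describe (restricting $L$ to prime divisors of $e$ for $\PSL_2(2^L)$ and $\Suz(2^{2L-1})$, to powers of $2$ with the $K$-restriction for $\PSL_2(p^L)$ with odd $p$, etc.), and then dispatching the surviving finitely many cases by a GAP computation comparing $M\sigma(M)\cdots\sigma^{e-1}(M)$ at random $M$ against a standard value, with exactly the same practical caveat about needing primitive polynomials over fields up to $\IF_{19^{256}}$.
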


\section{Systematic study of short words}\label{sec6}

In this section, we discuss how to prove with computer aid that all nonempty reduced words of length at most $8$ except for the power words of length $8$ are multiplicity-bounding. Actually, we will show the following stronger statement:

\begin{proposition}\label{lengthEightProp}
All nonempty reduced words of length at most $8$ which are not power words of length $8$ are very strongly multiplicity-bounding.
\end{proposition}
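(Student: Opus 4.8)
The plan is to apply the reduction machinery of Proposition \ref{mainReductionProp}(8), which reduces the property of being very strongly multiplicity-bounding to checking non-constancy of finitely many varied coset word maps on an explicit finite list of groups $\PSL_2(p^L)$ and $\Suz(2^{2L-1})$, and to combine this with the elementary criteria of Proposition \ref{examplesProp} and the reduction results of Section \ref{sec6} (announced just before this proposition) to cut the number of words down to a manageable list before invoking a computer. First I would enumerate, up to inversion and up to the equivalence of cyclic rotation / permutation of variables (which preserve the property by the lemmata alluded to in Section \ref{sec6}, e.g.\ \ref{permSubLem}), all reduced words of length $\le 8$. For every word in which some variable occurs with multiplicity $1$, Proposition \ref{examplesProp}(1) immediately applies; likewise words of the shapes in Proposition \ref{examplesProp}(2,3) are dispatched at once, and the $\gamma_n$-type words are handled by Corollary \ref{examplesCor}. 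This already removes the overwhelming majority of short words, leaving only a short explicit list of words in which every variable appears at least twice and which do not fall under these patterns. The non-power-word case $x^{\pm e}$ with $e\le 7$ and the mixed cases of small length are then the only genuine residue.

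For the words that survive, I would invoke Proposition \ref{mainReductionProp}(1),(8): since such a word $w$ has length $l\le 8$ and maximum variable multiplicity $m\le 8$ (in fact much smaller for the residual list, e.g.\ $l\le 8$, $m\le 4$ typically), the relevant groups are $\PSL_2(p^L)$ with $p\le m$ a prime, $p^L\ge 5$, and $L\le ml$ restricted to powers of $2$ (for $p\ge 5$) or to $\{$powers of $2\}\cup\{$odd primes$\}$ (for $p=3$) or to primes (for $p=2$), together with $\Suz(2^{2L-1})$ for $2\le L\le 4ml$ with $2L-1$ prime; for each such group one checks that no varied coset word map of $w$ is constant, using also the shortcuts that non-constancy of $w_{\Sym_3}$ (resp.\ $w_{\Suz(2)}$) already handles the whole $\PSL_2(2^L)$ (resp.\ $\Suz(2^{2L-1})$) families. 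For the finitely many remaining $(w,S)$ pairs I would run the GAP implementation described in Remark \ref{mainRem}(3): enumerate the variations of $w$, enumerate coset representatives of $S$ in $\Aut(S)$ (field automorphisms, and diagonal automorphisms when $p$ is odd), and verify that the corresponding coset word map on $S$ takes at least two values by evaluating it at a handful of well-chosen arguments (identity, diagonal generators of $\IF_{p^L}^\ast$, etc.), exactly as in the power-word computations of Section \ref{sec5}. Because $l,m$ are tiny here, the bound $L\le ml\le 64$ keeps all the fields small enough that these checks are fast.

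The power words $x^{\pm e}$ for $e\in\{1,\dots,7\}$ deserve a separate remark rather than the general machinery: $x^{\pm 1}$ has a variable of multiplicity $1$; for $e\in\{2,\dots,7\}$ one appeals to Theorem \ref{powerWordTheo}, noting that every odd $e$ is covered by Corollary \ref{oddCorollary} and that the only small even exponents are $e\in\{2,4,6\}$, each of which is multiplicity-bounding by Corollary \ref{evenCorollary2} — and in fact one can upgrade these to very strongly multiplicity-bounding directly, since the obstruction in Theorem \ref{powerWordTheo} for being merely multiplicity-bounding versus very strongly so collapses for power words, so only the $\PSL_2$ and $\Suz$ checks above are needed. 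Assembling all cases yields that every nonempty reduced word of length $\le 8$ other than $x^{\pm 8}$ is very strongly multiplicity-bounding.

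The main obstacle I anticipate is not any single hard argument but the bookkeeping: organising the enumeration of length-$\le 8$ words modulo the symmetries that preserve the property, so that the residual computer-checked list is provably complete, and then making the GAP verification trustworthy — in particular being certain that testing the coset word map at a fixed small set of sample points genuinely certifies non-constancy in every residual case, rather than merely failing to find a counterexample. Making the reduction steps airtight (which symmetries are legitimate, that variations of variations reduce to variations, that the $\PSL_2(q)^2$ case really does reduce to $\PSL_2(q)$ as in the proof of Theorem \ref{powerWordTheo}) is where the real care is required; once that scaffolding is in place, the actual checks are short.
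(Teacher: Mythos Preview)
Your overall architecture---reduce via Proposition~\ref{mainReductionProp} to finitely many groups, prune with Proposition~\ref{examplesProp}, hand the residue to GAP---matches the paper's, but two of your reduction claims are wrong and you miss the paper's central organising device.

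First, cyclic rotation is not a legitimate symmetry here. Lemma~\ref{permSubLem} gives you variable permutations, sign changes $x_i\mapsto x_i^{-1}$, inversion, and the mirror form, but not $uv\mapsto vu$: if the coset word map of $uv$ is constant with value $c$, that of $vu$ sends $(h_1,\ldots,h_d)$ to a conjugate of $c$ by $u(h_1g_1,\ldots,h_dg_d)$, which need not be constant. Second, your power-word argument has a gap. Theorem~\ref{powerWordTheo} and Corollaries~\ref{oddCorollary},~\ref{evenCorollary2} show $x^e$ is (weakly) multiplicity-bounding for $e\le 7$, but ``very strongly'' requires every \emph{variation} of $x^e$ to be weakly multiplicity-bounding, and the variations of $x^e$ are all positive words of length $e$ in up to $e$ variables---nothing collapses.

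The paper sidesteps both issues via Lemma~\ref{lengthEightLem}(1): rather than checking varied coset word maps word by word, it checks the \emph{unvaried} ``very weakly multiplicity-bounding'' condition for \emph{all} reduced words of a given length $l$ at once, then upgrades to ``very strongly'' using that variations of a length-$l$ non-power word are again length-$l$ non-power words. This makes the induction clean (Lemma~\ref{lengthFiveLem} handles $l\le 5$; then for $l=6,7,8$ Lemma~\ref{lengthEightLem}(3,b) reduces to two-variable words since $\lfloor 8/3\rfloor+1=3$), and the power words $x^e$, $e\le 7$, are absorbed automatically because they are very weakly multiplicity-bounding by the cited corollaries. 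Your per-word variation checking would work once the two errors are repaired, but it is strictly more computation and the power-word case then genuinely requires running through all positive words of each length.
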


We will work with the following weakening of \enquote{weakly multiplicity-bounding} (compare with Proposition \ref{nikolovProp}(2)):

\begin{definition}\label{veryWeaklyDef}
A reduced word $w$ of length $l$ and maximum variable multiplicity $m$ is \emph{very weakly multiplicity-bounding} if and only if none of the coset word maps of $w$ on any of the finite simple groups of one of the two forms $\PSL_2(p^L)$, $2\leq p\leq m$, $L\leq ml$, or $\Suz(2^{2L-1})$, $2\leq L\leq 4ml$, is constant.
\end{definition}

We note the following simple result, which is useful for reduction arguments cutting down the number of words of a given length to consider:

\begin{lemma}\label{permSubLem}
Let $w=w(x_1,\ldots,x_d)$ be a very weakly multiplicity-bounding reduced word. Then for all $\epsilon_1,\ldots,\epsilon_d\in\{\pm1\}$ and all $\sigma\in\Sym_d$, the reduced word $w(x_{\sigma(1)}^{\epsilon_1},\ldots,x_{\sigma(d)}^{\epsilon_d})$ is also very weakly multiplicity-bounding. In particular, the inverse of $w$ and the \emph{mirror form of $w$} (obtained by reading $w$ in reverse order) are very weakly multiplicity-bounding.\qed
\end{lemma}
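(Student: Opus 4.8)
The plan is to verify that being very weakly multiplicity-bounding is stable under each of the elementary operations involved — permuting the variables, inverting individual variables, and inverting the whole word — and then to derive the statements about the inverse and the mirror form by combining these. The first thing I would record is that if $w'$ is obtained from $w$ by permuting and/or inverting variables, or by inverting $w$ as a whole, then $w'$ has exactly the same length $l$ and the same multiset of variable multiplicities as $w$ (only the names and signs of letters change, never their number or grouping), hence the same maximum variable multiplicity $m$. By Definition~\ref{veryWeaklyDef} this means that the finite family of simple groups $\{\PSL_2(p^L) : 2\le p\le m,\ L\le ml\}\cup\{\Suz(2^{2L-1}) : 2\le L\le 4ml\}$ to be tested is literally the same for $w$ and for $w'$, so it suffices to transfer non-constancy of coset word maps one group at a time.

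For the main statement, fix such a simple group $S$, write $w=X_{\iota(1)}^{\delta_1}\cdots X_{\iota(l)}^{\delta_l}$, and put $w':=w(x_{\sigma(1)}^{\epsilon_1},\ldots,x_{\sigma(d)}^{\epsilon_d})$, so that in $w'$ the $i$-th argument slot of $w$ is filled by $x_{\sigma(i)}^{\epsilon_i}$. Given a coset word map $(w')_{S,\Aut(S)}^{[\alpha_1,\ldots,\alpha_d]}$ of $w'$ on $S$ (in the sense of Definition~\ref{cosetWordDef}(3)) and a point $(s_1,\ldots,s_d)\in S^d$, the $i$-th slot of $w$ receives the factor $(s_{\sigma(i)}\alpha_{\sigma(i)})^{\epsilon_i}\in\Aut(S)$. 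The key step is the elementary rewriting of this factor in the form $t_i\beta_i$ with $\beta_i\in\Aut(S)$ \emph{independent of the $s_j$} and $t_i\in S$ depending on $s_{\sigma(i)}$ through a bijection of $S$: for $\epsilon_i=1$ take $t_i=s_{\sigma(i)}$, $\beta_i=\alpha_{\sigma(i)}$; for $\epsilon_i=-1$, using the identity $\alpha^{-1}s\alpha=\alpha^{-1}(s)$ valid in $\Aut(S)$ under the natural embedding of the centerless group $S$, one gets $(s_{\sigma(i)}\alpha_{\sigma(i)})^{-1}=\alpha_{\sigma(i)}^{-1}(s_{\sigma(i)}^{-1})\cdot\alpha_{\sigma(i)}^{-1}$, so $t_i=\alpha_{\sigma(i)}^{-1}(s_{\sigma(i)})^{-1}$, $\beta_i=\alpha_{\sigma(i)}^{-1}$. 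Substituting these rewritings into all slots yields the identity $(w')_{S,\Aut(S)}^{[\alpha_1,\ldots,\alpha_d]}(s_1,\ldots,s_d)=w_{S,\Aut(S)}^{[\beta_1,\ldots,\beta_d]}(t_1,\ldots,t_d)$, where the map $(s_1,\ldots,s_d)\mapsto(t_1,\ldots,t_d)$ is the composite of the coordinate permutation induced by $\sigma$ with a product of bijections of $S$, hence a bijection of $S^d$. Therefore constancy of $(w')_{S,\Aut(S)}^{[\alpha_1,\ldots,\alpha_d]}$ would force constancy of the coset word map $w_{S,\Aut(S)}^{[\beta_1,\ldots,\beta_d]}$ of $w$ on $S$, contradicting the hypothesis on $w$; so $w'$ is very weakly multiplicity-bounding.

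For the \enquote{in particular}: every coset word map of the inverse word $w^{-1}$ on $S$ is the corresponding coset word map of $w$ on $S$ post-composed with the inversion bijection of $\Aut(S)$, so non-constancy transfers verbatim and $w^{-1}$ is very weakly multiplicity-bounding. For the mirror form $\tilde w$, I would note that $\tilde w$ equals the inverse of $w(x_1^{-1},\ldots,x_d^{-1})=X_{\iota(1)}^{-\delta_1}\cdots X_{\iota(l)}^{-\delta_l}$; the latter word is very weakly multiplicity-bounding by the main statement (applied with $\sigma=\id$ and all $\epsilon_i=-1$), and hence so is its inverse $\tilde w$ by the previous sentence. The only genuinely delicate point in all of this is the bookkeeping of the slot-to-variable correspondence under $\sigma$ — checking that the rewriting is consistent over all occurrences of a given variable and that the induced map $S^d\to S^d$ is a bijection, not merely a surjection — together with the one-line identity $\alpha^{-1}s\alpha=\alpha^{-1}(s)$ in $\Aut(S)$; everything else is routine.
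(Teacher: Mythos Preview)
Your argument is correct; the paper itself gives no proof of this lemma (it is marked with \qed\ immediately after the statement), treating it as an immediate consequence of the definitions, and what you have written is exactly the routine verification the paper suppresses. Your observation that $w'$ has the same length and maximum variable multiplicity as $w$ (so the test family of simple groups is unchanged), together with the coset rewriting $(s\alpha)^{-1}=\alpha^{-1}(s)^{-1}\cdot\alpha^{-1}$ in $\Aut(S)$ and the bijectivity of $(s_1,\ldots,s_d)\mapsto(t_1,\ldots,t_d)$, are precisely the points one needs.
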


Of course, analogous statements with \enquote{weakly multiplicity-bounding} etc.~instead of \enquote{very weakly multiplicity-bounding} also hold true.

The subsequent lemma collects some further simple, but important facts used in the proof of Proposition \ref{lengthEightProp}:

\begin{lemma}\label{lengthEightLem}
The following statements hold:

\begin{enumerate}
\item Let $l\in\IN^+$, and assume that all reduced words of length $l$ (resp.~all such words except for the power words) are very weakly multiplicity-bounding. Then actually all reduced words of length $l$ (resp.~all such words except for the power words) are very strongly multiplicity-bounding.
\item Let $w$ be a reduced word of the form $w_1(y_1,\ldots,y_t)w_2(x_1,\ldots,x_d)w_3(y_1,\ldots,y_t)$, where the variable sets $\{y_1,\ldots,y_t\}$ and $\{x_1,\ldots,x_d\}$ are disjoint (in which case we also say that \emph{$w_2$ can be isolated in $w$}) and the word $w_2(x_1,\ldots,x_d)$ is very strongly multiplicity-bounding. Then $w$ is very strongly multiplicity-bounding.
\item Let $l\in\IN^+$, and assume that for all $n\in\IN^+$, $n<l$, all reduced words of length $n$ are very strongly multiplicity-bounding. Then:

\begin{enumerate}
\item Every reduced word of length $l$ in which some variable occurs with multiplicity at most $2$ is very strongly multiplicity-bounding.
\item Every reduced word of length $l$ in at least $\lfloor l/3\rfloor+1$ distinct variables is very strongly multiplicity-bounding.
\end{enumerate}
\end{enumerate}
\end{lemma}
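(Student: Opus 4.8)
The plan is to prove the three statements of Lemma \ref{lengthEightLem} essentially independently, each by reducing to the machinery already built up in Sections \ref{sec2}--\ref{sec4}.

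For statement (1): A reduced word $w$ is very strongly multiplicity-bounding iff all of its variations are weakly multiplicity-bounding (by Definition \ref{stronglyDef} combined with Proposition \ref{nikolovProp}(2), which is what ``very weakly'' captures for words of the same length and variable multiplicity). The key observation is that every variation $w'$ of a reduced word $w$ of length $l$ is again a reduced word of length $l$ (Remark \ref{variationRem}(1)), and its maximum variable multiplicity is at most that of $w$, hence at most $l$. So if all reduced words of length $l$ (resp.\ all such non-power words) are very weakly multiplicity-bounding, then in particular all the variations $w'$ are very weakly multiplicity-bounding. But ``very weakly multiplicity-bounding'' for a word of length $l$ and variable multiplicity $\leq m$ is, by Proposition \ref{nikolovProp}(2) and Definition \ref{veryWeaklyDef}, exactly the condition that makes the word weakly multiplicity-bounding (the list in Definition \ref{veryWeaklyDef} contains the list from Proposition \ref{nikolovProp}(2) because of the uniform bounds $p\le m$, $L\le ml$, etc.). Hence each variation of $w$ is weakly multiplicity-bounding, so $w$ is very strongly multiplicity-bounding. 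One must note that the variation of a power word $x^e$ is again $x^e$, so the ``except power words'' clause is preserved. This is the cleanest of the three.

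For statement (2): This is an ``isolation'' result analogous to Proposition \ref{examplesProp}(2,3). Take a variation $w'$ of $w$; since $\{y_j\}$ and $\{x_i\}$ are disjoint, $w'$ decomposes as $w'_1 \, w'_2 \, w'_3$ where $w'_2$ is a variation of $w_2$. Given a nonabelian finite simple group $S$ and a coset word map $f$ of $w'$ on $S$, fix all the variables coming from the $y_j$'s arbitrarily; what remains is a coset word map of $w'_2$ on $S$ pre- and post-multiplied by fixed elements of $\Aut(S)$, which is constant iff the coset word map of $w'_2$ is. Since $w_2$ is very strongly multiplicity-bounding, no coset word map of any variation $w'_2$ of $w_2$ on $S$ is constant, so $f$ cannot be constant either; hence every variation of $w$ is weakly multiplicity-bounding, i.e., $w$ is very strongly multiplicity-bounding. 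A small technical point: one should argue via Lemma \ref{automorphicCosetLem} or Lemma \ref{sufficientlyLem}(2) to see that fixing the $y$-variables leaves a genuine coset word map of $w_2$ (up to the fixed outer automorphism factors and up to the fact that the ``$\alpha$-twists'' change), just as in the proof of Proposition \ref{examplesProp}.

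For statement (3): Both (a) and (b) are induction-free deductions from the inductive hypothesis plus the isolation principle (2) and the base cases in Proposition \ref{examplesProp}. For (a): if a variable $X$ occurs in $w$ with multiplicity $1$, then $w$ is very strongly multiplicity-bounding by Proposition \ref{examplesProp}(1). If $X$ occurs with multiplicity exactly $2$, write $w = w_1 X^{\pm 1} w_2 X^{\pm 1} w_3$ with $w_1,w_2,w_3$ not involving $X$. If the two exponents agree, apply Proposition \ref{examplesProp}(2) directly. If they are opposite, then $w_2$ is a reduced word of length strictly less than $l$ (it misses at least the two letters $X^{\pm1}$), so by hypothesis $w_2$ is very strongly multiplicity-bounding, and Proposition \ref{examplesProp}(3) applies. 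For (b): a reduced word of length $l$ in $\geq \lfloor l/3\rfloor+1$ distinct variables has average variable multiplicity $l / (\lfloor l/3\rfloor + 1) < 3$, so by pigeonhole some variable occurs with multiplicity at most $2$, and we are done by (a). The main obstacle --- really the only place needing care --- is in (2) and in the opposite-exponent case of (3)(a): one must verify that ``fixing the external variables'' genuinely produces a coset word map of $w_2$ in the technical sense of Definition \ref{cosetWordDef}, so that the strong-multiplicity-bounding hypothesis on $w_2$ can be invoked; this is handled exactly as in the proof of Proposition \ref{examplesProp} via Lemmata \ref{automorphicCosetLem} and \ref{sufficientlyLem}.
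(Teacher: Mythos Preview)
Your arguments for (2), (3a), and (3b) are essentially the paper's arguments and are correct. There is, however, a genuine gap in your proof of (1). You claim that ``very weakly multiplicity-bounding'' is, via Proposition \ref{nikolovProp}(2), exactly the condition making a word weakly multiplicity-bounding, because ``the list in Definition \ref{veryWeaklyDef} contains the list from Proposition \ref{nikolovProp}(2)''. This is false: Proposition \ref{nikolovProp}(2) requires checking the centerless (but not simple) groups $\PSL_2(p^L)^2$ as well, and these do \emph{not} appear in Definition \ref{veryWeaklyDef}. So knowing that every variation $w'$ of $w$ is very weakly multiplicity-bounding does not directly give that $w'$ is weakly multiplicity-bounding.

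The paper closes this gap with an extra step you omitted: if some coset word map of $w'$ on $\PSL_2(p^L)^2$ were constant, then by Lemma \ref{equationSystemLem} some \emph{varied} coset word map of $w'$ on the simple group $\PSL_2(p^L)$ would be constant. But a variation of a variation of $w$ is equivalent (Definition \ref{variationDef}(4)) to a variation of $w$, hence is itself a reduced word of length $l$ (and not a power word if $w$ is not), and is therefore very weakly multiplicity-bounding by hypothesis --- contradiction. This ``variations of variations'' observation is precisely what reduces the $n=2$ case back to the simple case and must be invoked explicitly. Also, your justification of the power-word clause is stated backwards: what is needed (and true) is that a variation of a non-power word is never a power word; your claim that ``the variation of a power word $x^e$ is again $x^e$'' is false (e.g., $x_{1,1}x_{1,2}x_{1,1}$ is a variation of $x^3$).
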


\begin{proof}
For (1): By definition, a reduced word $w$ of length $l$ is very strongly multiplicity-bounding if and only if all its variations, which are also of length $l$ and are not power words if $w$ is not a power word, are weakly multiplicity-bounding. But for this, it is actually sufficient that all variations of $w$ are very weakly multiplicity-bounding, in view of Proposition \ref{nikolovProp}(2), Lemma \ref{equationSystemLem} and the fact that variations of variations of $w$ are equivalent (in the sense of Definition \ref{variationDef}(4)) to variations of $w$.

For (2): This is a generalization of Proposition \ref{examplesProp}, and the proof is also a generalization of its proof: A variation $v$ of $w$ is of the form $v_1(\vec{y})v_2(\vec{x})v_3(\vec{y})$ with $v_2(\vec{x})$ weakly multiplicity-bounding. Say $\vec{x}=(x^{(1)},\ldots,x^{(d')})$ and $\vec{y}=(y^{(1)},\ldots,y^{(t')})$. Now in an equation of the form

\[
v(x^{(1)}\alpha_1,\ldots,x^{(d')}\alpha_{d'},y^{(1)}\beta_1,\ldots,y^{(t')}\beta_{t'})=\gamma,
\]

$\alpha_1,\ldots,\alpha_{d'},\beta_1,\ldots,\beta_{t'},\gamma\in\Aut(S)$, $S$ a nonabelian finite simple group, fixed, with all variables ranging over $S$, the middle segment $v_2(x^{(1)}\alpha_1,\ldots,x^{(d')}\alpha_{d'})$ of the left-hand side can be isolated. Hence assuming that the equation is universally solvable over $S^{d'+t'}$ and fixing the values of $y^{(1)},\ldots,y^{(t')}$, it follows that some coset word map of $v_2$ on $S$ is constant, contradicting that $v_2$ is weakly multiplicity-bounding.

For (3,a): Let $w$ be a reduced word of length $l$ in which some variable occurs with multiplicity at most $2$. If there even is a variable occurring with multiplicity $1$ in $w$, apply Proposition \ref{examplesProp}(1). Otherwise, $w$ is of the form

\[
w_1(x_1,\ldots,x_{d-1})x_d^{\epsilon_1}w_2(x_1,\ldots,x_{d-1})x_d^{\epsilon_2}w_3(x_1,\ldots,x_{d-1})
\]

with $\epsilon_1,\epsilon_2\in\{\pm1\}$. If $w_2$ is empty,then necessarily $\epsilon_1=\epsilon_2$ and $x_d^{2\epsilon_1}$ can be isolated in $w$. But $x_d^{2\epsilon_1}$ is very strongly multiplicity-bounding by Corollary \ref{evenCorollary2} and Proposition \ref{examplesProp}(1), so we can apply statement (2) and are done. Finally, if $w_2$ is non-empty, it is very strongly multiplicity-bounding by assumption, so apply Proposition \ref{examplesProp}(2,3) to conclude the proof.

For (3,b): This follows from (3,a), since a reduced word of length $l$ in at least $\lfloor l/3\rfloor+1 > l/3$ distinct variables must contain some variable with multiplicity at most $2$.
\end{proof}

From Lemma \ref{lengthEightLem}, we immediately get the following approximation to Proposition \ref{lengthEightProp}:

\begin{lemma}\label{lengthFiveLem}
All nonempty reduced words of length at most $5$ are very strongly multiplicity-bounding.
\end{lemma}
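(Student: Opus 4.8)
The plan is to induct on the length $n \leq 5$, using Lemma \ref{lengthEightLem}(3) to bootstrap from shorter words and handling only a small number of genuinely new cases by hand at each stage. For the base cases $n = 1, 2$, every nonempty reduced word is either $x^{\pm 1}$, $x^{\pm 2}$, or of the form involving two distinct variables, and in all such cases some variable occurs with multiplicity $1$, so Proposition \ref{examplesProp}(1) applies directly (or, for $x^{\pm 2}$, invoke Corollary \ref{evenCorollary2}).

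First I would set up the induction: assume all nonempty reduced words of length $n' < n$ are very strongly multiplicity-bounding, and let $w$ have length $n \in \{3,4,5\}$. By Lemma \ref{lengthEightLem}(3a), I may assume that every variable in $w$ occurs with multiplicity at least $3$; in particular $w$ has a variable of multiplicity $\geq 3$, which forces $n \geq 3$ and strongly constrains the multiset of variable multiplicities. For $n = 3$ the only option is a single variable of multiplicity $3$, i.e.\ $w = x^{\pm 3}$, which is very strongly multiplicity-bounding by Corollary \ref{oddCorollary} together with Proposition \ref{stronglyProp} — or more precisely, the proof of Theorem \ref{powerWordTheo} shows the power words $x^{\pm 3}$ are even strongly multiplicity-bounding; but since here I want \emph{very strongly}, I would instead note $x^{\pm 3}$ has no repeated distinct variable so this case is vacuous once the multiplicity-$\geq 3$ reduction is applied and only a single variable remains — I'd verify directly that the varied coset word maps of $x^3$ on the relevant $\PSL_2$ and Suzuki groups are non-constant, which is exactly the content already extracted in the power-word section for small exponents. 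For $n = 4$, after the reduction the only surviving possibility is again a single variable of multiplicity $4$, i.e.\ $w = x^{\pm 4}$; this is covered by Corollary \ref{evenCorollary2}, and its varied coset word maps reduce (since it is a power word) to the unvaried ones by Theorem \ref{powerWordTheo}'s proof, so being multiplicity-bounding suffices to rule out the pathology, but for the \emph{very strongly} conclusion I would cite the computation behind Corollary \ref{evenCorollary2} directly.

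For $n = 5$, after the multiplicity-$\geq 3$ reduction the possible multiplicity multisets are $\{5\}$ (the word $x^{\pm 5}$) and $\{3, \text{??}\}$ — but $3 + k = 5$ with $k \geq 3$ is impossible, so in fact only $w = x^{\pm 5}$ survives, and this is handled by Corollary \ref{oddCorollary} (odd exponent), again with the power-word machinery giving that varied and unvaried coset word maps coincide in the relevant sense so that the non-constancy needed is exactly what Corollary \ref{oddCorollary}'s proof establishes for $\PSL_2(p^L)$ and $\Suz(2^{2L-1})$.

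The main obstacle I anticipate is bookkeeping rather than depth: making sure that the reduction via Lemma \ref{lengthEightLem}(3a) is applied correctly so that at each length only power words remain as genuinely new cases, and then being careful that for the power words one really does get the \emph{very strongly} multiplicity-bounding conclusion and not merely the multiplicity-bounding one — this requires observing (as in the proof of Theorem \ref{powerWordTheo}) that for a power word $x^e$ every variation is again a power word in one variable, hence weakly multiplicity-bounding is equivalent to very weakly multiplicity-bounding is equivalent (via the equivalence chain (1)$\Leftrightarrow$(2) of that theorem and Lemma \ref{lengthEightLem}(1)) to very strongly multiplicity-bounding for these specific small exponents $3, 4, 5$, all of which lie in the verified range. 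So the proof is essentially: "induct, reduce to power words of length $\leq 5$ by Lemma \ref{lengthEightLem}(3a), and cite Corollaries \ref{oddCorollary} and \ref{evenCorollary2} together with Lemma \ref{lengthEightLem}(1) and the variation-structure of power words."
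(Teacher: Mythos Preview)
Your overall strategy matches the paper's: induct on the length, use Lemma \ref{lengthEightLem}(3) (the paper uses part (b), you use part (a); both work since for $l\leq 5$ a word in $\geq 2$ variables automatically has some variable of multiplicity $\leq 2$) to dispose of all multi-variable words, handle the remaining power words via Corollaries \ref{oddCorollary} and \ref{evenCorollary2}, and close with Lemma \ref{lengthEightLem}(1).

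However, there is a genuine error in how you justify the final step. You write that ``for a power word $x^e$ every variation is again a power word in one variable.'' This is false: a variation of $x^3$ such as $x_{1,1}x_{1,2}x_{1,1}$ is a two-variable word, not a power word. So you cannot argue that the varied coset word maps of $x^e$ reduce to the unvaried ones treated in Theorem \ref{powerWordTheo}, and your attempts to get ``very strongly'' for $x^{\pm 3}, x^{\pm 4}, x^{\pm 5}$ individually do not go through as written.

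The paper avoids this trap by applying Lemma \ref{lengthEightLem}(1) globally rather than to power words alone. The point is that Lemma \ref{lengthEightLem}(1) requires \emph{all} reduced words of length $l$ to be very weakly multiplicity-bounding, because a variation of a length-$l$ word is an arbitrary length-$l$ word (up to equivalence). Once the induction hypothesis plus Lemma \ref{lengthEightLem}(3) gives that every multi-variable word of length $l$ is very strongly (hence very weakly) multiplicity-bounding, and Corollaries \ref{oddCorollary} and \ref{evenCorollary2} give that $x^{\pm l}$ is multiplicity-bounding (hence weakly, hence very weakly), you have the full hypothesis of Lemma \ref{lengthEightLem}(1) and can conclude that \emph{all} length-$l$ words, power words included, are very strongly multiplicity-bounding. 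Your summary sentence essentially lists the right ingredients; just drop the incorrect claim about variations of power words and make explicit that Lemma \ref{lengthEightLem}(1) is being applied to the whole collection of length-$l$ words at once.
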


\begin{proof}
We proceed by induction on $l=1,\ldots,5$ to show that all reduced words of length $l$ are very strongly multiplicity-bounding. For $l=1$, this follows from Proposition \ref{examplesProp}(1). Assuming that $l>1$ now, we deduce from Lemma \ref{lengthEightLem}(3,b) that all reduced words of length $l$ in which the number of distinct variables is larger than $\lfloor5/3\rfloor=1$ are very strongly multiplicity-bounding, in particular very weakly multiplicity-bounding. Hence by Corollaries \ref{oddCorollary} and \ref{evenCorollary2}, all reduced words of length $l$ are very weakly multiplicity-bounding, and we conclude by an application of Lemma \ref{lengthEightLem}(1).
\end{proof}

In the subsequent discussion concerning words of length $6$, $7$ or $8$, we will use the following notation: For positive integers $n$ and $k$, denote by $\Part(n,k)$ the number of $k$-tuples $(a_1,\ldots,a_k)$ of positive integers such that $a_1+\cdots+a_k=n$. Moreover, fix a set $\overline{\Part}(n,k)$ of representatives for the classes of the equivalence relation $\sim$ on $\Part(n,k)$, defined by $(a_1,\ldots,a_k)\sim(b_1,\ldots,b_k)$ if and only if either $a_i=b_i$ for all $i=1,\ldots,k$ or $a_i=b_{k-i+1}$ for all $i=1,\ldots,k$. Furthermore, set $\p(n,k):=|\Part(n,k)|$ and $\overline{\p}(n,k):=|\overline{\Part}(n,k)|$.

We now describe our computational approach to reduced words $w$ of length $6$, $7$ or $8$ to conclude the proof of Proposition \ref{lengthEightProp}. 

\begin{itemize}
\item In view of Lemma \ref{lengthEightLem}(1), our goal is to show successively for $l=6,7,8$ that all reduced words $w$ of length $l$ except for the power words $x^{\pm8}$ are very weakly multiplicity-bounding.
\item By Lemma \ref{lengthEightLem}(3,b), this is clear if the number of distinct variables occurring in $w$ is at least $\lfloor 8/3\rfloor+1=3$, and the power word case (up to length $7$) is also clear by Corollaries \ref{oddCorollary} and \ref{evenCorollary2}. Hence we may restrict ourselves to the $2$-variable case, where $w$ is an alternating product of powers of variables $x$ and $y$, and by Lemma \ref{permSubLem}, we may also assume w.l.o.g.~that the first two variable powers in $w$ have positive exponent.
\item Denote by $b_w$ the number of variable power factors of $w$ (for example, $b_w=4$ for $w=x^2y^4x^{-1}y$). By Lemma \ref{lengthEightLem}(2), Corollaries \ref{oddCorollary} and \ref{evenCorollary2} and the induction hypothesis, we may assume $b_w\geq 4$. Fix $b_w\in\{4,\ldots,l\}$.
\item If $b_w$ is even, say $b_w=2k$, then $w$ is of the form $x^{e_1}y^{e_2}\cdots x^{e_{2k-1}}y^{e_{2k}}$. By the induction hypothesis and Lemma \ref{lengthEightLem}(3,a), we may assume w.l.o.g.~that $\mu_w(x),\mu_w(y)\geq\max(3,k)$, so that $\max(3,k)\leq\mu_w(x)\leq l-\max(3,k)$. Moreover, note that if we replace $w$ by its inverse and, if necessary, make substitutions of the form $x\mapsto x^{-1}$, $y\mapsto y^{-1}$, we can arrange for $y$ to be the first variable appearing in $w$ and the first two powers in $w$ still having positive exponent. Therefore, we may assume w.l.o.g.~that $\mu_w(x)\in\{\max(3,k),\ldots,\lfloor l/2\rfloor\}$. Fix $\mu_w(x)$ in this range, so that $\mu_w(y)=l-\mu_w(x)$ also gets fixed. Then we only need to consider words of the above form in which $(e_1,|e_3|,\ldots,|e_{2k-1}|)\in\Part(\mu_w(x),k)$ and $(e_2,|e_4|,\ldots,|e_{2k}|)\in\Part(l-\mu_w(x),k)$; these are $\p(\mu_w(x),k)\cdot\p(l-\mu_w(x),k)\cdot 2^{b_w-2}$ many words altogether.
\item If $b_w$ is odd, say $b_w=2k+1$, then $w$ is of the form $x^{e_1}y^{e_2}\cdots x^{e_{2k-1}}y^{e_{2k}}x^{e_{2k+1}}$. As before, we may assume w.l.o.g.~that $\mu_w(x)\geq\max(3,k+1)=k+1$, $\mu_w(y)\in\max(3,k)$. Fix $\mu_w(x)\in\{k+1,\ldots,l-\max(3,k)\}$. For the further reductions, we make a case distinction:

\begin{itemize}
\item If $\mu_w(x)=k+1$, we know that $(|e_1|,|e_3|,\ldots,|e_{2k+1}|)=(1,1,\ldots,1)$, and this property is not destroyed when replacing $w$ by its mirror form. Hence we may assume w.l.o.g.~that $(e_2,|e_4|,\ldots,|e_{2k}|)\in\overline{\Part}(l-\mu_w(x),k)$. Altogether, there are $\overline{p}(l-\mu_w(x),k)\cdot 2^{b_w-2}$ many words to consider in this subcase.
\item If $\mu_w(x)>k+1$, we may assume w.l.o.g.~that $(e_1,|e_3|,\ldots,|e_{2k+1}|)\in\overline{\Part}(\mu_w(x),k+1)$ and $(e_2,|e_4|,\ldots,|e_{2k}|)\in\Part(l-\mu_w(x),k)$, so there are $\overline{p}(\mu_w(x),k+1)\cdot\p(l-\mu_w(x),k)\cdot 2^{b_w-2}$ many words to consider in this subcase.
\end{itemize}
\end{itemize}

Table \ref{table1} at the end of this section gives an overview of the number of words we must consider in total; altogether, there are $40$ words of length $6$, $168$ words of length $7$ and $628$ words of length $8$ for which we need to check that they are very weakly multiplicity-bounding to conclude the proof of Proposition \ref{lengthEightProp}. We did this with the aid of GAP \cite{GAP4}.

\begin{table}[h]\caption{Number of words to consider for fixed $(l,b_w,\mu_w(x))$}\label{table1}
\begin{center}
\begin{tabular}{|c|c|c|c|}\hline
$l$ & $b_w$ & $\mu_w(x)$ & number of words \\ \hline
6 & 4 & 3 & 16 \\ \hline
6 & 5 & 3 & 8 \\ \hline
6 & 6 & 3 & 16 \\ \hline
7 & 4 & 3 & 24 \\ \hline
7 & 5 & 3 & 16 \\ \hline
7 & 5 & 4 & 48 \\ \hline
7 & 6 & 3 & 48 \\ \hline
7 & 7 & 4 & 32 \\ \hline
8 & 4 & 3 & 32 \\ \hline
8 & 4 & 4 & 36 \\ \hline
8 & 5 & 3 & 16 \\ \hline
8 & 5 & 4 & 48 \\ \hline
8 & 5 & 5 & 64 \\ \hline
8 & 6 & 3 & 96 \\ \hline
8 & 6 & 4 & 144 \\ \hline
8 & 7 & 4 & 64 \\ \hline
8 & 7 & 5 & 64 \\ \hline
8 & 8 & 4 & 64 \\ \hline
\end{tabular}
\end{center}
\end{table}

\section{Concluding remarks}\label{sec7}

We conclude this paper with two open problems and questions for further research on multiplicity-bounding words.

The first question concerns our results on power words. By Corollary \ref{oddCorollary}, we know that all power words of odd length are multiplicity-bounding, but the following interesting question is open:

\begin{question}\label{evenQues}
Do there exist infinitely many (positive) \emph{even} integers $e$ such that the power word $x^e$ is multiplicity-bounding?
\end{question}

Generally speaking, it would be desirable to have a better understanding of the possible element orders in cosets of nonabelian finite simple groups in their automorphism groups and in particular formulas for $\lcm_{s\in S}{\ord(s\alpha)}$, where $S$ is a nonabelian finite simple group and $\alpha$ an automorphism of $S$. With regard to Question \ref{evenQues}, by Theorem \ref{powerWordTheo}, it would suffice to have such formulas for $S$ one of $\PSL_2(q)$ or $\Suz(2^{2L-1})$; note that for $\alpha\in S$, the above least common multiple is just $\exp(S)$, for which there are known formulas at least in those two cases.

The second open question which we would like to raise is the following:

\begin{question}\label{algorithmQues}
Does there exist an algorithm which on input a reduced word $w$ decides whether $w$ is multiplicity-bounding?
\end{question}

We remark that it is not difficult to see by Lemma \ref{nonUnivLem} that a reduced word $w$ is \emph{not} multiplicity-bounding if and only if the following holds: There exists a nonabelian finite simple group $S$, a strictly monotonically increasing sequence $(n_i)_{i\in\IN}$ of positive integers and a constant $C\geq0$ such that for all $i\in\IN$, there exist $\vec{\alpha_1}^{(i)},\ldots,\vec{\alpha_d}^{(i)},\vec{\beta}^{(i)}\in\Aut(S^{n_i})$ such that in the system of $n_i$ equations as described in Lemma \ref{equationSystemLem}, at most $C$ of the equations are not universally solvable. Hence a possible approach to Question \ref{algorithmQues} consists of a more careful study of the possible combinations of equations in such systems (studying particularly the $\chi_j$, which are word map evaluations in the permutations $\sigma_k\in\Sym_n$ associated with the $\vec{\alpha_k}$).

\end{document}